\newtheorem{theorem}{Theorem}[section]
\newtheorem{lemma}[theorem]{Lemma}
\newtheorem{proposition}[theorem]{Proposition}
\newtheorem{corollary}[theorem]{Corollary}
\theoremstyle{definition}
\newtheorem{definition}[theorem]{Definition}
\newtheorem{remark}[theorem]{Remark}
\newtheorem{example}[theorem]{Example}
\DeclareMathOperator{\colim}{colim}
\newcommand{\isom}{\cong}
\newcommand{\Bcom}{B_{\mathrm{com}}}
\newcommand{\Ecom}{E_{\mathrm{com}}}
\newcommand{\RP}{\mathbb{RP}}
\newcommand{\Z}{\mathbb{Z}}
\newcommand{\F}{\mathbb{F}}
\newcommand{\U}{\mathcal{U}}
\newcommand{\V}{\mathcal{V}}
\newcommand{\C}{\mathcal{C}}
\newcommand{\D}{\mathcal{D}}
\newcommand{\W}{\mathcal{W}}
\newcommand{\hofib}{\mathop{\mathrm{hofib}}}
\newcommand{\Sq}{\mathrm{Sq}}
\newcommand{\Hom}{\mathrm{Hom}}
\newcommand{\one}{\mathds{1}}
\newcommand{\injects}{\hookrightarrow}
\newcommand{\KOcom}{KO_{\mathrm{com}}}
\newcommand{\rKOcom}{\widetilde{KO}_{\mathrm{com}}}
\def\co{\colon\thinspace}
\newcommand{\e}{\emph}
\newcommand{\srt}[1]{\stackrel{#1}{\to}}
\newcommand{\srm}[1]{\stackrel{#1}{\maps}}
\newcommand{\maps}{\longrightarrow}
\newcommand{\heq}{\simeq}
\newcommand{\cross}{\times}
\newcommand{\bbZ}{\mathbb{Z}}
\newcommand{\xmaps}{\xrightarrow}
\newcommand{\rKO}{\widetilde{KO}}
\newcommand{\st}{\mathrm{st}}
\begin{document}

\begin{abstract}
Commutative $K$--theory, a cohomology theory built from spaces of commuting matrices, has been explored in recent work of Adem, G\'{o}mez, Gritschacher, Lind, and Tillman.
In this article, we use unstable methods to construct explicit representatives for the real commutative $K$--theory classes on surfaces. These classes arise from commutative $O(2)$--valued cocycles, and are analyzed via the point-wise inversion operation on commutative cocycles.
\end{abstract}

\title[Commutative cocycles on surfaces]{Commutative cocycles and stable bundles over surfaces}
\author{Daniel A. Ramras}
\address{Mathematical Sciences, Indiana University - Purdue University Indianapolis\\
IN 46202}
\email{dramras@iupui.edu}
\thanks{D. Ramras was partially supported by the Simons Foundation (Collaboration Grants \#279007 and \#579789).}

\author{Bernardo Villarreal}
\address{Instituto de Matem\'{a}ticas, Universidad Nacional Aut\'{o}noma de M\'{e}xico\\
CDMX 04510}
\email{villarreal@matem.unam.mx}
\date{}
\subjclass[2010]{Primary  55N15; Secondary  55R35}
\keywords{Commutative $K$--theory, vector bundle, commuting matrices}
\maketitle

\tableofcontents

\section{Introduction}

Let $X$ be a finite CW complex.
In Adem--G\'{o}mez~\cite{Ad1}, commuting orthogonal matrices were used to construct a variant of real topological $K$--theory, $\KOcom (X)$, whose classes are represented by vector bundles over $X$ equipped with a \emph{commutative trivialization}.  A trivialization of a bundle $E\to X$ is called \emph{commutative} if all pairs of transition functions commute (as elements of the structure group) wherever they are simultaneously defined (see Definition~\ref{def: cscov}). Similar considerations with unitary matrices yield complex commutative $K$--theory. There is a natural forgetful map $\KOcom (X)\to KO^0 (X)$, and in Adem--G\'{o}mez--Lind--Tillman~\cite{Ad3} it was shown that this map
admits a splitting (additively), and the same holds in the complex case. 

Using methods from stable homotopy theory, Gritschacher~\cite{Grthesis} showed that the reduced group $\rKOcom (S^2)$ is isomorphic to $\Z/2\oplus \widetilde{KO} (S^2)$.
One of our main results, Theorem~\ref{thm: main}, is that this holds with $S^2$ replaced by any closed, connected surface $\Sigma$, and in fact our unstable methods provide a new proof of Gritschacher's result.
Moreover, in Theorem \ref{thm:main2} we establish an isomorphism of non-unital rings 
\begin{equation}\label{eqn:ring}\rKOcom(\Sigma)\cong\rKO(\Sigma)\times \langle y\rangle\end{equation} 
with $y^2=2y=0$, and in the Appendix we give a presentation for the real $K$--theory ring of $\Sigma$.

We use  \emph{commutative cocycles} with values in $O(2)$ to give an explicit construction of the generator 
$$y\in \ker (\KOcom (\Sigma)\maps KO  (\Sigma)).$$ 
The crucial feature of a commutative cocycle  is that its point-wise inverse is again a well-defined, commutative cocycle.
To construct $y$, we exhibit an explicit open cover of $S^2$ along with a  commutative cocycle $\alpha$ whose associated vector bundle is \emph{trivial}, but whose point-wise inverse produces a \emph{non}-trivial bundle (Proposition~\ref{prop:cco2b}). 
The cocycle $\alpha$ is classified by a map $S^2\to \Bcom O(2)$, where $\Bcom O(2) \subset BO(2)$ is the \e{classifying space for commutativity} in $O(2)$ introduced by Adem--Cohen--Torres-Giese~\cite{Ad5} (see Section~\ref{sec:TC}).
By analyzing the inclusions 
$$O(2)\injects O(3)\injects \cdots \injects O$$ 
and the associated maps on $\Bcom O(n)$ (Proposition~\ref{prop:pi2BcomOn}),
we show that $\alpha$ 
corresponds to
a non-trivial class  $y\in \rKOcom (S^2)$; since the vector bundle associated to $\alpha$ is trivial, the image of $y$ in $\rKO  (S^2)$ is trivial.

Recent work of Antol\'in-Camarena, Gritschacher, and Villarreal~\cite{AnCGrVi} introduces new characteristic classes  for bundles equipped with commutative trivializations, and we use these classes to extend our result from $S^2$ to general surfaces.
In the last section, we apply our methods to deduce new information about these characteristic classes, which are then used to deduce the ring isomorphism (\ref{eqn:ring}).

\begin{remark} In the complex case, the natural map $\widetilde{KU}_{\mathrm{com}} (X)\to \widetilde{KU}  (X)$ is an isomorphism for every 2-dimensional CW complex $X$. This follows from Adem--G\'{o}mez~\cite[Proposition 3.3]{Ad1}.
\end{remark}
 
\subsection*{Acknowledgments} We thank Alejandro Adem for encouraging us to examine the question of stability for  the classes in Section~\ref{sec: stability}, and we thank Simon Gritschacher and Omar Antol\'in for helpful conversations. Additionally, we thank the referee for many detailed comments that improved the exposition.

\section{Transitionally commutative structures and power operations}\label{sec:TC}

Let $G$ be a Lie group. For any integer $k>0$, let $C_k(G)\subset G^k$ be the subspace of ordered commuting $k$-tuples in $G$. Defining $C_0(G) := \{e\}$, the family of spaces $\{C_k(G)\}_{k\geq0}$ has a simplicial structure arising by restriction of the simplicial structure of $NG$ via the inclusions $\iota_k\colon C_k(G)\hookrightarrow (NG)_k$, where $NG$ is the nerve of $G$ as a category with one object. The \emph{classifying space for commutativity} of $G$ is defined as the geometric realization
\[\Bcom G := | C_\bullet(G) |.\]
For a finite CW complex $X$, the \emph{reduced real commutative} $K$--theory of $X$ is defined as 
$$\rKOcom (X) = \colim_{n\to\infty} [X, \Bcom O(n)].$$
The reduced complex commutative $K$--theory of $X$ is defined similarly, replacing $O(n)$ by $U(n)$.

In general, the space $\Bcom G$ comes equipped with a natural inclusion 
$$\iota\co\Bcom G\to BG.$$ 
The pullback of the universal $G$-bundle $EG\to BG$ along $\iota$ is denoted $\Ecom G$. Since $EG$ is contractible, we have that 
\[\Ecom G = \hofib(\Bcom G \to BG).\]
The kernel of $\iota_* \co \rKOcom(X)\to \widetilde{KO} (X)$ is naturally isomorphic to $[X, \Ecom O]$ (see Section~\ref{sec:add}), so that $\Ecom O$ represents 
the ``non-standard" part of real commutative $K$--theory (and similarly in the complex case).

\begin{definition}\label{def: cscov}
Let $E\to X$ be a principal $G$-bundle. A \emph{commutative trivialization}  
of $E$
is an open cover $\{U_i\}$ of $X$ together with transition functions 
$$\{\alpha_{ij}\co U_i \cap U_j\to G\}$$ 
such that
\begin{enumerate}
\item $E$ is isomorphic to the $G$-bundle induced by $\{\alpha_{ij}\}$;
\item For any $x\in U_i\cap U_j\cap U_k\cap U_l$, the commutator $[\alpha_{ij}(x),\alpha_{kl}(x)]$ is trivial.\
\end{enumerate}  
\end{definition}

Following \cite{Ad1} a principal $G$-bundle that admits a  
commutative trivialization is called \emph{transitionally commutative}. In \cite{Grthesis}, S. Gritschacher defines a \emph{transitionally commutative structure} (TC structure) on a bundle $E\to X$, as a homotopy class $[{f^\prime}]\in [X, \Bcom G]$ such that $\iota \circ f^\prime$ is a classifying map for $E$; in other words, a TC structure on $E$ is a lift, up to homotopy, of a  classifying map $f$ for $E$: 
\[
\xymatrix{
& \Bcom G\ar[d]^{\iota}\\
X\ar@/^/[ru]^{f^\prime}\ar[r]^f & BG.}
\]

\begin{definition}
For $A\leq G$ abelian, the inclusion $BA\to BG$ factors as 
$$BA= \Bcom A \to \Bcom G \xrightarrow{\iota} BG.$$ If a classifying map $f\colon X\to BG$ for a $G$-bundle $E$ factors up to homotopy through the inclusion $BA\to BG$, for some abelian subgroup $A\subset G$, then we say the lift $f^\prime\colon X\to BA\to\Bcom G$ is an \e{algebraic} TC structure on $E$. 
\end{definition}
 
\begin{example}
Three particular examples of algebraic TC structures will be important.
If $E$ is a trivial bundle, then it has a \e{standard} algebraic TC structure in which $A = \{e\}$ is the trivial subgroup of $G$. If $E$ is a real line bundle, then its classifying map factors uniquely (up to homotopy) through $BO(1)$, again yielding a \e{standard} algebraic TC structure on $E$. Finally, if $E$ is a 2-dimensional real vector bundle, then a choice of orientation on $E$ yields an algebraic TC structure with $A=SO(2)$, and again we refer to this as the standard TC structure associated to an oriented 2--dimensional bundle. In these situations, we will use the notation $E^{\st}$ to denote $E$ with its standard TC structure. 
\end{example}

The following theorem allows us to pass between the homotopical notion of a TC structure and the more geometric notion in Definition~\ref{def: cscov}. 

\begin{theorem}[\cite{Ad1}, Adem, G\'{o}mez]\label{thm:comclass}
Let $X$ be a finite CW-complex and $E\to X$ a principal $G$-bundle classified by $f\co X\to BG$. Then $f$ factors up to homotopy through the inclusion $\Bcom G\to BG$ if and only if $E$ is transitionally commutative.
\end{theorem}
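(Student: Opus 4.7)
I would prove the two implications separately, relying in both cases on the explicit simplicial description of $\Bcom G$ as $|C_\bullet(G)|$, viewed as a subspace of $BG = |NG|$ via the inclusions $\iota_k\co C_k(G)\hookrightarrow (NG)_k = G^k$. The forward direction reduces to an observation about the standard classifying map built from cocycle data, while the backward direction requires extracting a commutative trivialization from a homotopy lift, which is the genuine technical content of the theorem.

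\textbf{Forward direction.} Suppose $E$ admits a commutative trivialization $(\{U_i\},\{\alpha_{ij}\})$. After refining if necessary, assume the cover is good, and choose a subordinate partition of unity $\{\phi_i\}$. I would appeal to the standard Segal--Milnor construction of a classifying map $f\co X\to BG$ from cocycle data: for $x\in X$, let $\{i_0<\cdots<i_k\}$ be the indices with $\phi_{i_j}(x)>0$, and send $x$ to the point on the $k$-simplex labeled $[\alpha_{i_0 i_1}(x)|\cdots|\alpha_{i_{k-1}i_k}(x)]$ of $NG$ with barycentric coordinates $(\phi_{i_0}(x),\ldots,\phi_{i_k}(x))$. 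The cocycle condition makes this well-defined and continuous, and classical theory shows $f$ classifies $E$. The key observation is that condition~(2) of Definition~\ref{def: cscov}, applied at $x \in U_{i_0}\cap\cdots\cap U_{i_k}$, forces the entries $\alpha_{i_0 i_1}(x),\ldots,\alpha_{i_{k-1}i_k}(x)$ to pairwise commute. Hence the labeling tuple lies in $C_k(G)$, and the $k$-simplex lies in $|C_\bullet(G)|=\Bcom G$, so $f$ factors through $\Bcom G$.

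\textbf{Backward direction.} Suppose $f\simeq \iota\circ f'$ for some $f'\co X\to \Bcom G$. Since $X$ is a finite CW complex, I may assume $f'$ has image in a finite skeleton. The plan is to construct a canonical open cover $\{V_\sigma\}$ of (a suitable neighborhood of the image of) $\Bcom G$, indexed by non-degenerate simplices $\sigma=[g_1|\cdots|g_k]\in C_k(G)$ -- for instance, by open stars of simplices in a barycentric subdivision -- together with trivializations of the pullback of $EG\to BG$ whose transition functions on overlaps $V_\sigma\cap V_\tau$ are given by words in the coordinate entries of $\sigma$ and $\tau$. Pulling this structure back along $f'$ and intersecting with a covering of $X$ yields an open cover $\{U_i\}$ of $X$ with transition functions $\alpha_{ij}$ such that the induced bundle is isomorphic to $E$. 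The commutativity requirement (2) of Definition~\ref{def: cscov} follows from the observation that two opens $V_\sigma,V_\tau$ overlap only when $\sigma$ and $\tau$ are faces of a common simplex $\rho\in C_\bullet(G)$; the defining tuple of $\rho$ is commuting, so the associated transition entries commute at every point of the overlap.

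\textbf{Main obstacle.} The forward direction is essentially an unwinding of the classical classifying-map construction, coupled with the combinatorial remark that commutativity of cocycles forces image simplices into $C_\bullet(G)$. The real technical work lies in the backward direction, where one must exhibit a sufficiently functorial open cover of $\Bcom G$ on which the universal bundle trivializes in a way that makes the commutativity condition (2) hold \emph{globally}, not merely inside a single simplex. The crux is the observation that overlapping opens necessarily share a containing simplex of $C_\bullet(G)$, thereby forcing the relevant transition values into a common commuting tuple. Verifying this rigorously -- in particular, that the pulled-back cocycle recovers $E$ up to isomorphism after one applies the homotopy between $f$ and $\iota\circ f'$ -- constitutes the core technical step.
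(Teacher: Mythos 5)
The paper itself does not prove this theorem: it is quoted from Adem--G\'omez \cite{Ad1}, and the only related material here is the Section~\ref{sec:TC} machinery (a commutative cocycle induces $N\alpha\co N\C\to C_\bullet(G)$, plus Remark~\ref{rem:partition}). Your forward direction is exactly that machinery made explicit: the Segal--Dupont classifying map built from a cocycle and a partition of unity lands in the simplex labeled by $[\alpha_{i_0i_1}(x)|\cdots|\alpha_{i_{k-1}i_k}(x)]$, and condition (2) of Definition~\ref{def: cscov} (applied to the four-fold intersections $U_{i_{a-1}}\cap U_{i_a}\cap U_{i_{b-1}}\cap U_{i_b}$) puts that label in $C_k(G)$, so the map factors through $\Bcom G$; this agrees with the cited proof. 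Your backward direction also follows the right strategy, namely showing that $\Ecom G=\iota^*EG\to\Bcom G$ is itself transitionally commutative and pulling that structure back along $f'$, using $f\simeq\iota\circ f'$ and homotopy invariance to identify $(f')^*\Ecom G$ with $E$.

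The one step that would fail as literally described is your proposed cover of $\Bcom G$. Since $\Bcom G$ is the realization of a simplicial \emph{space}, its simplices come in continuous families parameterized by $C_k(G)$; the star of a single nondegenerate simplex $\sigma\in C_k(G)$ (or of its barycenter in a barycentric subdivision) is not open, because membership imposes closed conditions -- equalities among products of group entries -- on the $C_m(G)$-coordinates of the simplices containing $\sigma$, and after any honest open thickening the claim that two members of the cover meet only when their indexing simplices share a common coface is lost. The standard fix is to use the canonical countable cover $V_0,V_1,\dots$ of $|NG|$ from the classical proof that $|NG|$ classifies principal bundles (the locus where the $i$-th barycentric-type coordinate is positive, as in Milnor's join model): on $V_i\cap V_j$ the transition function sends a point to a product of \emph{consecutive simplicial entries of that point's own coordinates}, not of a fixed label. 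Restricted to $\Bcom G$, all transition values at a given point are products of entries of a single commuting tuple, hence pairwise commute -- which is precisely your key observation, implemented on a cover that is actually open -- and pulling back along $f'$ gives the commutative trivialization of $E$. With that substitution your outline is correct and is essentially the argument of \cite{Ad1}.
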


\begin{definition}
Let $\C$ be a (not necessarily open) cover of a space $X$. A \emph{commutative cocycle} over $\C$ consists of a cocycle $\alpha=\{\alpha_{ij}\co C_i \cap C_j\to G\}$ where condition (2) of Definition \ref{def: cscov} above holds.
\end{definition}

In general, associated to a cover of a space $X$ by subspaces $\mathcal{C} = \{C_i\}_{i\in I}$, one has a category $\underline{\mathcal{C}}$ internal to topological spaces with object space $\coprod_i C_i$ and morphism space $\coprod_{(i,j)} C_i \cap C_j$ (here $x\in C_i\cap C_j$, considered as an element in the $(i,j)$--component of the disjoint union, is a morphism from $x\in C_i$ to $x\in C_j$). The nerve $N\mathcal{C}$ of this category is the \v{C}ech complex, as considered in~\cite{DuggerIsaksen}. A $G$--valued cocycle $\alpha$ on the cover $\mathcal{C}$ induces a functor $\underline{\mathcal{C}}\to \underline{G}$, where $\underline{G}$ is the category with one object associated to $G$. The induced map of nerves, $N\alpha\co N{\mathcal{C}}\to NG$, has image contained in $C_\bullet (G)$ when $\alpha$ is commutative, so the map between nerves factors as
\[N\C\xrightarrow{N\alpha}C_\bullet(G)\xrightarrow{\iota_\bullet} NG\]
By a map $\C\to \D$ between covers of $X$, we mean a function $f\co \C\to \D$ such that $C\subset f(C)$ for each $C\in \C$. Note that maps of covers induce functors between the associated categories, and simplicial maps between \v{C}ech complexes. We say a cover $\C$ of $X$ is \emph{good} if the natural projection $p_\bullet\colon N\C\to N\{X\}$ (induced by inclusions $C_i\hookrightarrow X$) is a homotopy equivalence after taking geometric realization. Note that there is a canonical homeomorphism $|N\{X\}|=X$; we will treat this as an identification from here on.  In~\cite{DuggerIsaksen} it is proven that all open covers are good. If $\alpha$ is a commutative cocycle over a good cover $\C$, then the TC structure associated to $\alpha$ is the homotopy class of $f_\alpha=|N\alpha|\circ p^{-1}\colon X\to \Bcom G$ 
where $p=|p_\bullet|$ and $p^{-1}$ is a homotopy inverse to $p$. We will say that two commutative cocycles  
 are equivalent if the induced TC structures are equal.

\begin{remark}\label{rem:partition}
A $G$--bundle $E\to X$ with a commutative trivialization over an open cover $\mathcal{U}$ gives rise to a commutative cocycle $\alpha$ over $\U$. If $X$ admits a partition of unity 
subordinate to $\U$, then one can construct a map $\lambda\co X\to |N\mathcal{U}|$ (this map also depends on a choice of ordering of the cover; see \cite[Section 2]{DuggerIsaksen} for details) which is a left inverse, and hence a homotopy inverse, to $p$. It follows that the TC structure on the bundle $E$ associated to $\alpha$ is  the homotopy class of the map $f_\alpha:=|N\alpha|\circ \lambda\co X\to \Bcom G$. 
We also note that, as a homotopy inverse to $p$, $\lambda$ is a homotopy equivalence and is independent, up to homotopy, of the choices involved. 
 \end{remark}

\paragraph{\bf Power operations} 
For each $n$ we have a simplicial map 
$$(\phi^n)_\bullet\co C_\bullet(G)\to C_\bullet(G),$$ 
given by $(\phi^n)_k(g_1,...,g_k) = (g_1^n,...,g_k^n)$ (for $n=0$, we set $g_i^0 = e$), yielding a map 
\[\phi^n\co\Bcom G\to \Bcom G\]
after geometric realization. 
The following properties can easily be checked.
\begin{enumerate}
\item $\phi^m\circ\phi^{n}=\phi^{mn}$
\item Any map $\gamma\co\Bcom G\to \Bcom H$ arising from a homomorphism of Lie groups $G\to H$ satisfies $\phi^n\circ\gamma = \gamma\circ\phi^n$.
\end{enumerate}

\begin{lemma}\label{lem:po&cc}
Let $\C$ be a good cover of a space $X$, and let $\alpha=\{\alpha_{ij}\}$ be a commutative cocycle over $\C$. For each integer $n$, set $\alpha^n:=\{(-)^n\circ\alpha_{ij}\}$. Then $\alpha^n$ is also a commutative cocycle over $\C$, with associated TC structure $[f_{\alpha^n}]=[\phi^n\circ f_{\alpha}]$. 
\end{lemma}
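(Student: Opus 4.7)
\medskip
\noindent\textbf{Proof plan.} The plan is to verify the two cocycle conditions for $\alpha^n$ directly, and then to identify $f_{\alpha^n}$ with $\phi^n\circ f_\alpha$ by showing that the simplicial map $N(\alpha^n)$ factors through $N\alpha$ followed by $(\phi^n)_\bullet$.

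First I would check that $\alpha^n$ is a commutative cocycle over $\C$. For the cocycle identity, fix $x\in C_i\cap C_j\cap C_k$ and note that applying the commutativity condition (2) of Definition~\ref{def: cscov} to the quadruple $(i,j,j,k)$ gives $[\alpha_{ij}(x),\alpha_{jk}(x)]=e$. Hence $(\alpha_{ij}(x)\alpha_{jk}(x))^n = \alpha_{ij}(x)^n\alpha_{jk}(x)^n$, and combining this with $\alpha_{ij}(x)\alpha_{jk}(x)=\alpha_{ik}(x)$ yields $\alpha^n_{ij}(x)\alpha^n_{jk}(x)=\alpha^n_{ik}(x)$; the normalization $\alpha^n_{ii}(x)=e$ is immediate. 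The commutativity condition for $\alpha^n$ is formal: if $[\alpha_{ij}(x),\alpha_{kl}(x)]=e$, then certainly $[\alpha_{ij}(x)^n,\alpha_{kl}(x)^n]=e$. Note the argument is equally valid for negative $n$ since each element in question lies in an abelian subgroup on which inversion is a group homomorphism.

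Next I would unwind the definition of $N\alpha$ on a $k$-simplex. A $k$-simplex of $N\C$ is a point $x\in C_{i_0}\cap\cdots\cap C_{i_k}$, and $(N\alpha)_k$ sends it to $(\alpha_{i_0i_1}(x),\dots,\alpha_{i_{k-1}i_k}(x))\in C_k(G)$. Applying $(\phi^n)_k$ to this tuple gives $(\alpha_{i_0i_1}(x)^n,\dots,\alpha_{i_{k-1}i_k}(x)^n)$, which is exactly $(N\alpha^n)_k$ evaluated on the same simplex. Thus $N\alpha^n=(\phi^n)_\bullet\circ N\alpha$ as simplicial maps, and taking geometric realization gives $|N\alpha^n|=\phi^n\circ |N\alpha|$.

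Finally, choosing a homotopy inverse $p^{-1}\colon X\to |N\C|$ to the projection $p$, we obtain
\[f_{\alpha^n} \;=\; |N\alpha^n|\circ p^{-1} \;=\; \phi^n\circ |N\alpha|\circ p^{-1}\;=\;\phi^n\circ f_\alpha,\]
which yields the desired equality $[f_{\alpha^n}]=[\phi^n\circ f_\alpha]$ in $[X,\Bcom G]$. The only nontrivial step is checking the cocycle identity for $\alpha^n$, and this is precisely where the commutativity hypothesis on $\alpha$ is essential; everything else is a direct translation between the simplicial and homotopical descriptions of TC structures.
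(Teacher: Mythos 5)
Your proof is correct and follows essentially the same route as the paper: the key point in both is that $N\alpha^n$ factors simplicially as $(\phi^n)_\bullet\circ N\alpha$ through $C_\bullet(G)$, so $|N\alpha^n|=\phi^n\circ|N\alpha|$ and the TC structures agree after composing with $p^{-1}$. Your explicit check that $\alpha^n$ satisfies the cocycle identity (using that $\alpha_{ij}(x)$ and $\alpha_{jk}(x)$ commute) is a detail the paper leaves implicit, and it is correctly handled, including the case of negative $n$.
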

 \begin{proof}
The induced simplicial map $N\alpha\colon N\mathcal{C} \to NG$ has image contained in $C_\bullet(G)$, 
and the simplicial map $N\alpha^n$ factors as $N\mathcal{C}\xrightarrow{N  \alpha  }  C_\bullet(G)\xrightarrow{ \phi^n}C_\bullet(G) \subset NG.$
Therefore $|N\alpha^n| = \phi^n \circ |N\alpha|.$
\end{proof}

\section{Transitionally commutative structures over the sphere}

In this section we exhibit a commutative trivialization for the trivial $O(2)$--bundle over $S^2$, with the property that the induced map $S^2 \to \Bcom O(2)$ is \e{not} nullhomotopic. We prove this last statement using the power operations introduced in the previous section. In Section~\ref{sec: real} this TC structure will be used to produce non-trivial commutative $K$--theory classes on surfaces.

\subsection{Associated clutching function over a sphere}\label{cf-sec}
We use the cover $\C$ of $S^n\subset\mathbb{R}^{n+1}$ with $n>1$, consisting of three closed sets $C_1,C_2$ and $C_3$ defined as follows. Consider the ``left" and ``right" hemispheres 
\[C_1 = \{\vec{x}\in S^n\;|\;x_0\leq 0\}\,\,\, 
\textrm{ and } \,\,\, C = \{\vec{x}\in S^n\;|\;x_0\geq 0\}.\] 
Next, we cover $C$ with the ``north" and ``south" $n$-dimensional discs, 
\[C_2 = \{\vec{x}\in C\;|\;x_{n}\geq0\} \text{  and  } C_3 = \{\vec{x}\in C\;|\;x_n\leq0\}.\] 
Consider the retraction $r\co C_2\to C_2\cap C_3$ given by 
\[ r(x_0,...,x_n) = \left(\sqrt{1-\sum_{i=1}^{n-1} x_i^2}, x_1, \ldots, x_{n-1}, 0\right)\]
Let $\alpha=\{\alpha_{ij}\co C_i\cap C_j\to G \}$ be a $G$--valued cocycle on the (closed) cover $\C$. 
The usual gluing construction now gives us the $G$--space
\[E_\alpha = C_1\times G\sqcup C_2\times G\sqcup C_3\times G/(x,g)_j \sim (x,\alpha_{ij}(x)g)_i\]
where $(x,g)_i\in C_i\times G$, with $G$--action given by right multiplication in $G$. 

\begin{lemma} \label{lem:E_alpha}The natural projection $E_\alpha\to S^n$ makes this $G$--space into a (locally trivial) principal $G$--bundle.
\end{lemma}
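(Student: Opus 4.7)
The $G$-action on $E_\alpha$ comes from right multiplication on the $G$-factor of each $C_i \times G$. Since the relation $(x,g)_j \sim (x, \alpha_{ij}(x)g)_i$ is $G$-equivariant, it descends to a free continuous right action on $E_\alpha$ whose orbit map is $\pi$. So the content of the lemma is the local triviality of $\pi$.

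To prove local triviality, I plan to exhibit, for each $p \in S^n$, a continuous local section $s\co V \to E_\alpha$ of $\pi$ on an open neighborhood $V$ of $p$; the map $(x,g) \mapsto s(x) \cdot g$ will then be the desired trivialization. If $p$ is in the interior of a single $C_i$, set $s(x) = [x,e]_i$ on $V \subset C_i^\circ$. If $p \in C_i \cap C_j$ but $p \notin C_k$, I choose $V$ disjoint from $C_k$ (so $V \subset C_i \cup C_j$), extend $\alpha_{ji}$ continuously from $C_i \cap C_j$ to $\tilde\alpha_{ji}$ on $V \cap C_j$ using a local retraction onto $V \cap C_i \cap C_j$, and set $s(x) = [x,e]_i$ on $V \cap C_i$ and $s(x) = [x, \tilde\alpha_{ji}(x)]_j$ on $V \cap C_j$. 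These agree on $V \cap C_i \cap C_j$ since $\alpha_{ij}\alpha_{ji} = e$, and continuity follows from the closed gluing lemma.

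The main case is $p \in C_1 \cap C_2 \cap C_3$. I would proceed in two stages. First, using the retraction $r$ together with a symmetric retraction $r'\co C_3 \to C_2 \cap C_3$, I trivialize $E_\alpha$ over $C := C_2 \cup C_3$: set $\sigma_C(x) = [x, e]_2$ for $x \in C_2$ and $\sigma_C(x) = [x, \alpha_{23}(r'(x))^{-1}]_3$ for $x \in C_3$; the gluing relation makes these agree on $C_2 \cap C_3$ (where $r'$ is the identity). Combined with $\sigma_1(x) = [x,e]_1$ on $C_1$, I obtain two sections that differ on the equator $C_1 \cap C$ by a continuous transition function $\gamma\co C_1 \cap C \to G$. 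The cocycle identity $\alpha_{12}\alpha_{23} = \alpha_{13}$ is exactly what ensures $\gamma$ is well-defined on $C_1 \cap C_2 \cap C_3$, where its two sub-formulas (from $C_1 \cap C_2$ and $C_1 \cap C_3$) must coincide.

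For the second stage, I would pick a small open $V \ni p$, extend $\gamma$ continuously from $V \cap C_1 \cap C$ to $\tilde\gamma\co V \to G$ using a tubular neighborhood of $V \cap C_1 \cap C$ in $V$, and define $s(x) = \sigma_1(x)$ on $V \cap C_1$ and $s(x) = \sigma_C(x) \cdot \tilde\gamma(x)$ on $V \cap C$. These agree on $V \cap C_1 \cap C$ by the definition of $\gamma$, and the two pieces are closed subsets of $V$ whose union is $V$, so $s$ is continuous on $V$. The main obstacle is this triple-intersection case, where three overlapping pieces must be patched and group-valued transition functions extended; the retraction $r$ is tailored precisely to produce the trivialization of $E_\alpha|_C$ cleanly in the first stage, after which the remaining work is a standard extension argument.
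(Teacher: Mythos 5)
Your proposal is correct and is essentially the paper's argument: both reduce the problem to two-piece gluings --- first dealing with $C = C_2 \cup C_3$, then with the pair $\{C_1, C\}$ --- and both use retractions onto the closed intersections to extend the transition data to neighborhoods where the gluing becomes manifestly locally trivial. The paper packages this as a single general observation about a union of two closed sets with retractible neighborhoods (yielding a $G$--equivariant homeomorphism with an open-cover clutching) applied twice, while you run the same mechanism pointwise via local sections and a case analysis; the difference is organizational rather than mathematical.
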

\begin{proof} We apply the following general observation: If $X=A_1\cup A_2$ is the union of two closed sets $A_1$ and $A_2$, and there exist open neighborhoods $U_i \supset A_i$ that retract to $A_i$, then for any map $a\co A_1 \cap A_2 \to G$, there is a $G$--equivariant homeomorphism  
\[(A_1 \cross G \sqcup A_2 \cross G)/(x,g)_1 \sim (x, a(x)g)_2 \isom
 (U_1 \cross G \sqcup U_2 \cross G)/(x,g)_1 \sim (x, \tilde{a}(x)g)_2\]
 where $\tilde{a} (x) = a\circ \rho$ and $\rho\co U_1\cap U_2\to A_1\cap A_2$ is the retraction defined by gluing retractions together $\rho_i \co U_i\to A_i$ ($i=1, 2$) as follows. By hypothesis $U_1\cap U_2=(U_1\cap A_2)\cup(U_2\cap A_1)$, and we define $\rho$ to equal $\rho_1$ on the first set and $\rho_2$ on the second set. Since the intersection of these two sets is $A_1\cap A_2$, where $\rho_1=\rho_2=Id$, $\rho$ is well defined. 
 
 This observation applies to the cover of $C$ by the closed sets $C_1$ and $C_2$, thereby showing that the restriction of $E_\alpha$ to $C$ is a locally (and hence globally) trivial principal $G$--bundle. Applying this observation again to the cover of $S^n$ by the closed sets $C_1$ and $C$ completes the proof.
 \end{proof}

\[
\begin{tikzpicture}
\draw (0,0) circle (2cm) ;
\draw(0,2) arc (30 : -30 : 4);
\draw[dashed](0,2) arc (150 : 210 : 4);
\draw(.55,-.3) arc (270: 290 : 4.2);
\draw[dashed](2,0) arc (80 : 90 : 14);
\draw[red,thick,->](.2,1.6) arc (70 : 2 : 2);
\draw(1.2,1.2) node {$r$};
\draw(-3,0) node {$S^n$};
\draw(-1.2,0) node {$C_1$};
\draw(1,.4) node {$C_2$};
\draw(1.1,-1.1) node {$C_3$};
\draw(0,2.3) node {$\alpha_{12}$};
\draw(0,-2.3) node {$\alpha_{13}$};
\draw(2.5,0) node {$\alpha_{23}$};
\end{tikzpicture}
\]
We define the clutching function $\varphi\co C_1\cap(C_2\cup C_3) = S^{n-1}\to G$ as
\[\varphi(x)=\left\{
\begin{matrix}
\alpha_{12}(x)\alpha_{23}(r(x)) & \text{if} & x\in C_1\cap C_2\\
\alpha_{13}(x) & \text{if} & x\in C_1\cap C_3
\end{matrix}\right.\] 
which is well defined and continuous since $\alpha$ satisfies the cocycle condition. We will refer to $\varphi$ as the clutching function induced by $\alpha$. 
Note that $\varphi$ is a cocycle for the cover $\{C_1, C_2 \cup C_3\}$, so we have an associated bundle 
\[E_\varphi = \left(C_1\times G\sqcup (C_2\cup C_3)\times G\right)/(x,g)_{23} \sim (x,\varphi(x)g)_1.\]
We refer to $E_\varphi$ as the bundle clutched by $\varphi$.

\begin{lemma}\label{lem:tf&cf}
The principal $G$-bundle $E_\alpha\to S^n$ induced by $\alpha$ is isomorphic to the bundle clutched by $\varphi\co S^{n-1}\to G$. 
\end{lemma}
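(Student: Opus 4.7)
The plan is to write down an explicit $G$-equivariant homeomorphism $\Psi\colon E_\alpha \to E_\varphi$ covering the identity on $S^n$. The strategy is to leave the trivializations over $C_1$ and over $C_3$ untouched and to use the retraction $r$ to adjust the trivialization over $C_2$, so that the three-set cover $\{C_1, C_2, C_3\}$ effectively collapses to the two-set cover $\{C_1, C_2 \cup C_3\}$ used to define $E_\varphi$. Explicitly, I would define
\begin{align*}
\Psi[(x,g)_1] &= [(x,g)_1] \qquad (x\in C_1),\\
\Psi[(x,g)_2] &= [(x,\alpha_{23}(r(x))^{-1}g)_{23}] \qquad (x\in C_2),\\
\Psi[(x,g)_3] &= [(x,g)_{23}] \qquad (x\in C_3),
\end{align*}
where $(y,h)_{23}$ denotes the class of an element of $(C_2\cup C_3)\times G$ in $E_\varphi$.

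The next step is to verify that $\Psi$ descends to a well-defined map on each pairwise intersection. Over $C_2\cap C_3$ the retraction $r$ restricts to the identity, so $\Psi[(x,\alpha_{23}(x)g)_2]$ and $\Psi[(x,g)_3]$ both equal $[(x,g)_{23}]$. Over $C_1\cap C_2$, the defining formula $\varphi(x)=\alpha_{12}(x)\alpha_{23}(r(x))$ together with the clutching relation $(y,h)_{23}\sim(y,\varphi(y)h)_1$ in $E_\varphi$ ensures $\Psi$ respects the gluing $(x,g)_2\sim(x,\alpha_{12}(x)g)_1$. Over $C_1\cap C_3$, the branch $\varphi(x)=\alpha_{13}(x)$ of the definition handles the gluing $(x,g)_3\sim(x,\alpha_{13}(x)g)_1$ in exactly the same way.

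By construction $\Psi$ is continuous and $G$-equivariant, and covers the identity on $S^n$. A two-sided inverse is produced by twisting by $\alpha_{23}(r(x))$ (rather than its inverse) on $C_2$, so $\Psi$ is a principal $G$-bundle isomorphism. The only conceptual subtlety is pinning down the correct twist on $C_2$: one needs simultaneously to trivialize $E_\alpha|_{C_2\cup C_3}$ using the $C_3$-chart and to produce exactly $\varphi$ as the resulting transition function into $C_1$. This is precisely the bookkeeping built into the formula for $\varphi$ (via the factor $\alpha_{23}(r(x))$), so once the role of that factor is recognized the remainder of the argument is a direct check.
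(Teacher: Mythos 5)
Your proof is correct and is essentially the paper's argument: the paper defines the same comparison map (there written in the direction $E_\varphi\to E_\alpha$, twisting by $\alpha_{23}(r(x))$ on the $C_2$--chart) and checks compatibility with the gluings exactly as you do. Your map $\Psi$ is just its inverse, so the two proofs coincide up to direction.
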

\begin{proof}
Denote an element in $(C_2\cup C_3)\times G$ by $(x,g)_{23}$, and let \[\psi\co C_1\times G\sqcup (C_2\cup C_3)\times G\to E_\alpha\]
be the map given by $\psi((x,g)_1) = [(x,g)_1]$ and 
\[\psi((x,g)_{23}) = \left\{\begin{matrix}
[(x,\alpha_{23}(r(x))g)_2] & \text{if} & x\in C_2\\
[(x,g)_3] & \text{if} & x\in C_3
\end{matrix}\right.\]
We claim that the map $\psi$ is well defined in the quotient 
$E_\varphi$. 

Indeed, let $x\in C_1\cap(C_2\cup C_3)$. Then $\psi((x,g)_1) = [(x,g)_1]$ and we have two cases:
\begin{center}
$\bullet$ if $x\in C_2$, $\psi((x,\varphi(x)^{-1}g)_{23}) = [(x,\alpha_{12}(x)^{-1}g)_2] = [(x,g)_1]$
\end{center}

\begin{center}
$\bullet$ if $x\in C_3$, $\psi((x,\varphi(x)^{-1}g)_{23}) = [(x,\alpha_{13}(x)^{-1}g)_3] = [(x,g)_1]$
\end{center}

Let $\psi$ also denote the induced map $E_\varphi\to E$. To prove our claim it only remains to show that $\psi$ is right $G$-equivariant, but this is true since the construction of $\psi$ only involves left multiplication by transition functions. 
\end{proof}

\subsection{TC structures on $O(2)$-bundles over $S^2$}

We study the example case of $\C$ as a cover of $S^2$. In this case the pairwise intersections of the three sets in $\C$ are all homeomorphic to a closed interval. Let $k$ be an integer and denote the matrix $\left(\begin{matrix}
\cos k\theta & - \sin k\theta\\
\sin k\theta & \cos k\theta
\end{matrix}\right)$ by $R_{k\theta}$, where $0\leq\theta\leq \pi$. Also let $A=\left(\begin{matrix}
1&0\\
0&-1
\end{matrix}\right)$. With these matrices we can define a cocycle over $\mathcal{C}$ as follows. 
Projection onto the second coordinate defines a homeomorphism from 
each intersection $C_i\cap C_j$ to the interval $[-1,1]$; we treat these homeomorphisms as identifications in the formulas to follow. 
Then for any $k\in \Z$ we define $_k\alpha=\{_k\alpha_{ij}\colon C_i\cap C_j\to O(2)\}$ as 
\begin{align}\label{tcs:S2}
_k\alpha_{12}(t) & = R_{k\theta(t)}\notag\\
_k\alpha_{23}(t) & = A\\
_k\alpha_{13}(t) & = R_{k\theta(t)}A\notag
\end{align}
where $\theta(t)=\arccos(t)$ maps the front side intersection point $(0,1,0)$ to $\theta=0$ and the back side intersection point $(0,-1,0)$ to $\theta = \pi$. Notice that $R_{0}=I$ and $R_{k\pi} = \pm I$, that is, at the intersection points $t\in C_1\cap C_2\cap C_3$, both $_k\alpha_{12}(t)$ and $_k\alpha_{13}(t)$ lie in the center of $O(2)$. Therefore $_k\alpha$ is a commutative cocycle.

We now explain how a commutative cocycle on $\C$ gives rise to a well-defined homotopy class of maps $S^2\to \Bcom G$. 

Let $f\colon \C_1\to \C_2$ be a map of covers (as in Section~\ref{sec:TC}). If $f$ is bijective and the inclusions $C_1 \cap \cdots \cap C_n \injects f(C_1)\cap \cdots \cap f(C_n)$ are all homotopy equivalences, then the induced map $N\C_1\to N\C_2$ is a level-wise homotopy equivalence, and hence a homotopy equivalence on realizations. (Note that since the identity morphisms form a disjoint component in the space of morphisms of $\underline{\C_i}$, the nerves $N\C_i$ are proper simplicial spaces.)

Now, our cover $\C$ of $S^2$ admits a map to the open cover $\U$ consisting of $\epsilon$--neighborhoods of the sets $C_i$ (in the Euclidean metric, say). For $\epsilon < \sqrt{2}$, each  intersection of these neighborhood deformation retracts to the corresponding intersection of closed sets, so $|N\C|\to |N\U|$ is a homotopy equivalence. As pointed out in Remark \ref{rem:partition}, given a partition of unity subordinate to $\U$, we have a homotopy equivalence $S^2\to |N\U|$. Then the desired map is the composite $S^2\to |N\U|\to |N\C|$, where the last map is a homotopy inverse to $|N\C|\to |N\U|$.

From the previous paragraph, we see that if $\V$ is an open cover admitting a map from $\C$, and the induced map $|N\C|\to |N\V|$ is a homotopy equivalence, then we obtain a homotopy equivalence $S^2 \to |N\V| \to |N\C|$ (where the first map is induced by a partition of unity, and the second is a homotopy inverse to $|N\C|\to |N\V|$). 
As noted in as in Remark~\ref{rem:partition}, the former map is independent, up to homotopy, of the choices involved.
We claim that the composite  is independent, up to homotopy, of the choice of open cover $\V$. Specifically, say $\W = \{W_1, W_2, W_3\}$ is another open cover such that $C_i \subset W_i$ and the induced map $N\C\to N\W$ is a homotopy equivalence. Then we claim that the maps $S^2 \to |N\C|$ associated to $\V$ and $\W$ are homotopic. To see this, consider the following diagram in the homotopy category, in which all three maps out of $S^2$ are induced by partitions of unity and the vertical maps are induced by inclusions of covers:
\[\xymatrix{& |N\V| \ar[d] & \\
S^2 \ar[ur]^\heq \ar[r]^-\heq \ar[dr]_\heq & |N\V\cup \W| 
								& |N\C|.  \ar[ul]_\heq \ar[dl]^\heq\\
& |N\W| \ar[u]
}
\]
The triangles on the left are (homotopy) commutative because a partition of unity subordinate to $\V$ is also subordinate to $ \V\cup \W$, and hence there is a choice making the top left triangle strictly commutative (and similarly for the bottom left triangle). It follows that the vertical maps are homotopy equivalences. The right half of the diagram is (homotopy) commutative because the two composite functors $N\C \to N\V\cup \W$ are related by a continuous natural transformation (whose component on an object $(x, C_i)$ is $(x, V_i \cap W_i$)). Since all the maps in the diagram are isomorphisms, a diagram chase shows that the outer square commutes after inverting the arrows on the right (as desired).

Now, given  a commutative cocycle $\alpha$ on $\C$, we obtain a (homotopy class of) map(s) $\alpha_* \co S^2\to  \Bcom O(2)\to BO(2)$, factoring through $|N\alpha| \co |N\C|\to \Bcom O(2)$. 

\begin{lemma}\label{lem:partition} The map $\alpha_*$ classifies the bundle $E_\alpha\to S^2$.
\end{lemma}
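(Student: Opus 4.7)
The plan is to reduce the claim to the standard fact that for a $G$-valued cocycle $\beta$ on an open cover $\V$ of a paracompact space $X$, the composite $X \xrightarrow{\lambda_\V} |N\V| \xrightarrow{|N\beta|} BG$ classifies the associated bundle $E_\beta$. To invoke this in our setting, I first extend $\alpha$ from the closed cover $\C$ to a commutative cocycle on an open cover. Take $V_i$ to be a small $\epsilon$-neighborhood of $C_i$ in $S^2$, choose retractions $\rho_{ij}\co V_i\cap V_j \to C_i \cap C_j$ which agree on the triple intersection $V_1\cap V_2\cap V_3$, and set $\tilde{\alpha}_{ij} := \alpha_{ij}\circ \rho_{ij}$. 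Compatibility of the $\rho_{ij}$ on triple overlaps ensures the cocycle identity, and commutativity is inherited from $\alpha$. The general gluing observation from the proof of Lemma~\ref{lem:E_alpha} then yields an $O(2)$-equivariant homeomorphism $E_{\tilde{\alpha}}\isom E_\alpha$, and by the discussion preceding the present lemma the inclusion $|N\C|\hookrightarrow |N\V|$ is a homotopy equivalence for $\epsilon$ sufficiently small.

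Because $\tilde{\alpha}$ restricts to $\alpha$ on $\C$, the square
\[\xymatrix{N\C \ar[r]^-{N\alpha} \ar[d]_\simeq & C_\bullet(O(2)) \ar@{=}[d] \\ N\V \ar[r]^-{N\tilde{\alpha}} & C_\bullet(O(2))}\]
commutes, so after realization $\alpha_*$ is homotopic to the composite
\[S^2 \xrightarrow{\lambda_\V} |N\V| \xrightarrow{|N\tilde{\alpha}|} \Bcom O(2) \xrightarrow{\iota} BO(2),\]
where $\lambda_\V$ is the partition-of-unity map for the open cover $\V$. The classical classifying-map theorem then identifies this composite as a classifying map for $E_{\tilde{\alpha}} \isom E_\alpha$, which is the desired conclusion.

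The principal obstacle is the classical result invoked in the final step, applied to the specific simplicial model $BG = |NG|$ used here. Concretely, one must exhibit an explicit $G$-bundle isomorphism $p^* E_{\tilde{\alpha}}\isom |N\tilde{\alpha}|^* EG$ over $|N\V|$, where $p\co |N\V|\to S^2$ is the canonical projection, and then conclude using that $\lambda_\V$ is a homotopy inverse to $p$. This is essentially bookkeeping in the simplicial bar construction of $EG$ and should follow from the results in~\cite{DuggerIsaksen}. A secondary technical point is verifying that compatible retractions $\rho_{ij}$ on triple intersections can actually be arranged, but for the explicit cover at hand---where $C_1\cap C_2\cap C_3$ consists of only two points---this is a straightforward exercise in choosing small neighborhoods.
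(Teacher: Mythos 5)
Your overall strategy is the same as the paper's: extend the cocycle from the closed cover $\C$ to a cocycle on the open cover of $\epsilon$--neighborhoods which restricts to $\alpha$, and then invoke the classical fact (the paper cites Dupont) that for an open cover with a partition of unity the map $|N\beta|\circ\lambda$ classifies $E_\beta$. The reduction via the commuting square and the identification of $\lambda_\V$ with a homotopy inverse of $|N\V|\to S^2$ is also how the paper proceeds. The problem is the step you dismiss as ``a straightforward exercise'': the compatible retractions you ask for do not exist. A retraction $\rho_{ij}\co V_i\cap V_j\to C_i\cap C_j$ must fix $C_i\cap C_j$ pointwise (and this is forced anyway if you want $\tilde{\alpha}_{ij}=\alpha_{ij}\circ\rho_{ij}$ to restrict to $\alpha_{ij}$ for a \emph{general} cocycle $\alpha$). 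But a point $x\in C_1\cap C_2$ close to, and distinct from, a triple point lies in $V_1\cap V_2\cap V_3$ while $x\notin C_1\cap C_3$; hence $\rho_{12}(x)=x$ cannot equal $\rho_{13}(x)\in C_1\cap C_3$. So the retractions cannot agree on $V_1\cap V_2\cap V_3$, and without that agreement the identity $\tilde{\alpha}_{13}=\tilde{\alpha}_{12}\tilde{\alpha}_{23}$ on the open triple overlap has no reason to hold for a general commutative cocycle on $\C$; your formula simply does not produce a cocycle. (It is also worth noting that you do not need the extension to be commutative: the lemma only concerns the composite into $BO(2)$.)

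The paper gets around exactly this difficulty by not writing any formula for the extended cocycle. Instead it works with the bundle: $E_\alpha$ is canonically trivial over each $C_i$, and since $U_i$ deformation retracts to $C_i$ it is trivial over each $U_i$; one then corrects arbitrary trivializations over $U_i$ by extending the comparison functions $C_i\to G$ over $U_i$ via the retraction, so that the new trivializations over $U_i$ restrict to the canonical ones over $C_i$. The resulting transition functions automatically satisfy the cocycle identity (they come from honest trivializations of an honest bundle) and restrict to $\alpha$ on $\C$, which is all that is needed; moreover $E_\alpha$ is by construction the bundle of this extended cocycle, so no separate isomorphism $E_{\tilde{\alpha}}\isom E_\alpha$ has to be produced. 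To repair your argument, replace your retraction formula for $\tilde{\alpha}$ by this trivialization argument (or otherwise genuinely verify the cocycle identity on triple overlaps); as written, the extension step is a gap.
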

\begin{proof}
In general, given a $G$--bundle $E\to X$ with cocycle $\alpha$ over an open cover $\V$ with a partition of unity subordinate to $\V$, the map $|N\alpha|\circ \lambda$, where $\lambda$ is the map discussed in Remark~\ref{rem:partition}, classifies $E$ (for a detailed discussion see \cite{Dupont}). 
To complete the proof, it will suffice to show that there  is a (not necessarily commutative) cocycle on $\U$ (the cover consisting of $\epsilon$--neighborhoods of $C_i$) that restricts to $\alpha$. By definition, $E_\alpha$ has a canonical trivialization over each  $C_i$ (and the differences between these trivializations recover the cocycle $\alpha$). Since $U_i$ deformation retracts to $C_i$, we see that $E_\alpha$ is also trivial over each $U_i$. Choose a trivialization of $E_\alpha$ over each $U_i$. On $C_i$, these trivializations differ from the canonical trivializations by left-multiplication with some functions $C_i\to G$, and we can extend these functions to $U_i$ via the retractions $U_i\to C_i$. Left-multiplying our trivializations over $U_i$ by these functions yields new trivializations over $U_i$ that agree with the canonical trivialization over $C_i$. Hence the cocycle associated to these adjusted trivializations restricts to $\alpha$, as desired.
\end{proof}

\begin{lemma}\label{lem:to2b}
The map $S^2 \to \Bcom O(2)$ associated to $\{_k\alpha_{ij}\}$ becomes nullhomotopic after composing with $\iota\co \Bcom O(2) \to BO(2)$.
\end{lemma}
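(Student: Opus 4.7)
The plan is to show that the principal $O(2)$--bundle $E_{{}_k\alpha}\to S^2$ is trivial; then, by Lemma~\ref{lem:partition}, its classifying map is $\iota\circ\alpha_*$, so this composite must be nullhomotopic.

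First, I would compute the clutching function $\varphi\co S^1\to O(2)$ induced by ${}_k\alpha$ using the formula from Section~\ref{cf-sec}. Since ${}_k\alpha_{23}(t)=A$ is constant, for $x\in C_1\cap C_2$ we get
\[\varphi(x)={}_k\alpha_{12}(x)\cdot{}_k\alpha_{23}(r(x))=R_{k\theta(x_1)}\cdot A,\]
and for $x\in C_1\cap C_3$ we get $\varphi(x)={}_k\alpha_{13}(x)=R_{k\theta(x_1)}\cdot A$. Hence $\varphi(x)=R_{k\theta(x_1)}A$ on the whole circle $S^1=\{x\in S^2\mid x_0=0\}$.

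Next, I would observe that, as we traverse $S^1$ once, the parameter $\theta(x_1)=\arccos(x_1)$ starts at $0$ (at the point $(0,1,0)$), increases to $\pi$ (at $(0,-1,0)$), and then decreases back to $0$; in particular $\theta$ takes the same value at the start and end of the loop. This lets me write down the explicit homotopy
\[H\co S^1\times[0,1]\maps O(2),\qquad H(x,s)=R_{sk\theta(x_1)}\cdot A,\]
which is continuous because $R_{sk\theta(x_1)}=I$ at the basepoint for every $s$, and which deforms $\varphi$ to the constant reflection $A$. Thus $\varphi$ is nullhomotopic.

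Finally, by Lemma~\ref{lem:tf&cf} the bundle $E_{{}_k\alpha}$ is isomorphic to the bundle clutched by $\varphi$; since $\varphi$ is nullhomotopic, this clutched bundle is trivial, so $E_{{}_k\alpha}$ is a trivial $O(2)$--bundle. By Lemma~\ref{lem:partition}, the composition $\iota\circ\alpha_*\co S^2\to BO(2)$ classifies $E_{{}_k\alpha}$, and hence is nullhomotopic.

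No step here seems to present a real obstacle: the computation of $\varphi$ is immediate from the definitions, and the key geometric fact is simply that the path $\theta(x_1)$ traced out by going once around the equator is a loop in $[0,\pi]$ based at $0$, hence contractible. The slightly delicate point is making sure the explicit null-homotopy $H$ is well defined at the basepoint of $S^1$, which is guaranteed by $\theta(1)=0$.
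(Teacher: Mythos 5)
Your proposal is correct and follows essentially the same route as the paper: compute the clutching function $\varphi_{k,1}(x)=R_{k\theta(x_1)}A$, observe that it is nullhomotopic (the paper notes this by running once around the equator, which traces a path followed by its reverse; your explicit homotopy $H(x,s)=R_{sk\theta(x_1)}A$ accomplishes the same thing), and then invoke Lemmas~\ref{lem:tf&cf} and~\ref{lem:partition} to conclude that $\iota\circ f_k$ classifies the trivial bundle. The only cosmetic quibble is your justification of continuity of $H$ via the basepoint: continuity is automatic since $(x,s)\mapsto R_{sk\theta(x_1)}A$ is a composite of continuous maps, and no basepoint condition is needed for a homotopy of clutching functions.
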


Before proving Lemma \ref{lem:to2b}, we set up some notation. For each $k,n\in \Z$, let \[\varphi_{k,n}\co S^1\to O(2)\] denote the   clutching function induced by $_k\alpha^n$, as in Section~\ref{cf-sec}. 
Note that by Lemmas \ref{lem:partition} and \ref{lem:tf&cf}, we simply need to show that the bundles clutched by $\varphi_{k,1}$ are trivial.

We can represent  $\varphi_{k,n}$ diagrammatically as 
\begin{align}
\begin{tikzpicture}
\draw{(-4.7,0)} node{$\theta=\pi$};
\draw{(-5,-1)} node{$\varphi_{k,1}$};
\draw(-3,0) circle(1cm);
\draw(-4,0) node {$-$};
\draw(-2,0) node {$-$};
\draw(-1.3,0) node {$\theta=0$};
\draw[blue,thick,->](-2,0) arc (0 : 40 : 1);
\draw[blue,thick,->](-2,0) arc (0 : -40 : 1);
\draw(-2.9,1.5) node {$R_{k\theta}A$};
\draw(-2.9,-1.5) node {$R_{k\theta}A$};
\draw(1.3,0) node {$\theta=\pi$};
\draw(1,-1) node {$\varphi_{k,n}$};
\draw(3,0) circle (1cm);
\draw(4,0) node {$-$};
\draw(2,0) node {$-$};
\draw(4.7,0) node {$\theta=0$};
\draw[blue,thick,->](4,0) arc (0 : 40 : 1);
\draw[blue,thick,->](4,0) arc (0 : -40 : 1);
\draw(3.2,1.5) node {$R_{k\theta}^nA^n$};
\draw(3.2,-1.5) node {$(R_{k\theta}A)^n$};
\end{tikzpicture}\label{pic:cf}
\end{align}
where the upper semicircle corresponds to $C_1\cap C_2$ and the lower semicircle to $C_1\cap C_3$. 

\begin{proof}[Proof of Lemma \ref{lem:to2b}]
Running around the circle on the left of Figure (\ref{pic:cf}) in counterclockwise orientation starting at $\theta=0$, we see that the loop determined by $\varphi_{k,1}$ is homotopic to the constant loop for all $k\in \Z$. 
The result now follows from
  Lemmas \ref{lem:partition} and \ref{lem:tf&cf}.
\end{proof}

\paragraph{Notation:} From now on we will simply denote $f_k:=f_{_k\alpha}\co S^2\to \Bcom O(2)$. By Lemma \ref{lem:po&cc}, we have that $[\phi^n\circ f_k] =[ f_{(_k\alpha)^n}]$.\\

\noindent For $0\leq\theta\leq\pi$, the definition of $R_{\theta}$ is made in such a way that the homotopy class $[R_{2\theta}]$ can be regarded as a generator in $\pi_1(SO(2))$ (where  and thus a generator in $\pi_1(O(2))$ via the inclusion $SO(2)\subset O(2)$). We fix a generator $1\in \pi_2(BO(2))=\Z$ by choosing it to be the pre-image of the aforementioned generator under the inverse of the connecting homomorphism $\pi_2(BO(2))\to\pi_1(O(2))$ arising from the long exact sequence of homotopy groups of the fibration $O(2)\to EO(2)\to BO(2)$.
 
\begin{proposition}\label{prop:cco2b}
The homotopy class of the classifying map
 \[S^2\xrightarrow{\phi^n\circ f_k} \Bcom O(2)\xrightarrow{\iota}BO(2)\] 
in $\pi_2(BO(2))$ is given by $nk/2$ if $n$ is even and $(n-1)k/2$ if $n$ is odd.
\end{proposition}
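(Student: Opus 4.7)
The plan is to identify the composite $\iota\circ\phi^n\circ f_k$ with the classifying map of the bundle clutched by the explicit function $\varphi_{k,n}\co S^1\to O(2)$ of Figure (\ref{pic:cf}), and then compute the class of $\varphi_{k,n}$ in $\pi_1(O(2))\cong \pi_2(BO(2))$ by winding numbers. Concretely, Lemma \ref{lem:po&cc} gives $[\phi^n\circ f_k]=[f_{({}_k\alpha)^n}]$, Lemma \ref{lem:partition} identifies $[\iota\circ f_{({}_k\alpha)^n}]$ with a classifying map for $E_{({}_k\alpha)^n}$, and Lemma \ref{lem:tf&cf} shows this bundle is the one clutched by $\varphi_{k,n}$. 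So the problem reduces to a computation in $\pi_1(O(2))$.

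Next, I would simplify the formulas for $\varphi_{k,n}$. Using $R_\theta^n=R_{n\theta}$, $A^2=I$, and $AR_\theta A^{-1}=R_{-\theta}$, one obtains $(R_{k\theta}A)^2=R_{k\theta}R_{-k\theta}=I$, hence $(R_{k\theta}A)^n$ equals $I$ for $n$ even and $R_{k\theta}A$ for $n$ odd. Thus on the two semicircles of $S^1=C_1\cap(C_2\cup C_3)$: for $n$ even, $\varphi_{k,n}$ equals $R_{nk\theta}$ on the upper semicircle and $I$ on the lower; for $n$ odd, $\varphi_{k,n}$ equals $R_{nk\theta}A$ on the upper semicircle and $R_{k\theta}A$ on the lower.

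Finally, I would compute the winding number in each case. For $n$ even, $\varphi_{k,n}$ lies entirely in $SO(2)$: traversing $S^1$, the upper semicircle contributes $nk/2$ revolutions (since $R_{nk\theta}$ for $\theta\in[0,\pi]$ realizes $nk/2$ copies of the generator $[R_{2\theta}]$), while the lower semicircle is constantly $I$ and contributes $0$, yielding the total class $nk/2$. For $n$ odd, the loop sits in the non-identity component $A\cdot SO(2)$ of $O(2)$; right-translation by $A$ transports it to a loop in $SO(2)$ representing the same class in $\pi_1(O(2))$, given by $R_{nk\theta}$ on the upper semicircle and $R_{k\theta}$ on the lower semicircle traversed in the direction of decreasing $\theta$. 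The upper piece contributes $nk/2$ revolutions and the lower contributes $-k/2$, for a total of $(n-1)k/2$.

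The main obstacle is keeping track of orientation and sign conventions: the lower semicircle must be traversed with $\theta$ decreasing once a counter-clockwise orientation of $S^1=C_1\cap (C_2\cup C_3)$ is fixed, producing the negative contribution in the odd case. Relatedly, one must check that right-translation by $A$ (rather than left-translation) matches the chosen generator of $\pi_2(BO(2))$; the two choices differ by a sign because conjugation by $A$ acts on $\pi_1(SO(2))=\Z$ by negation, and consistency with the generator fixed in the paragraph preceding the statement of the proposition picks out the right-translation convention.
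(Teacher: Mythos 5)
Your proposal is correct and follows essentially the same route as the paper: reduce to the clutching function $\varphi_{k,n}$ via Lemmas \ref{lem:po&cc}, \ref{lem:partition} and \ref{lem:tf&cf}, simplify the matrix powers, and compute winding numbers, with the even case lying in $SO(2)$ and the odd case handled by stripping off the constant factor $A$. The one step you assert rather than fully justify---that right-translating the clutching function by the constant $A$ leaves the clutched bundle (hence the class in $\pi_2(BO(2))$) unchanged---is exactly what the paper proves with an explicit $G$-equivariant homeomorphism $E_{\gamma a}\cong E_{\gamma}$ after first homotoping $\varphi_{k,n}$ to $\gamma_{(n-1)k}A$; your direct winding count $nk/2-k/2=(n-1)k/2$ is equivalent to that step.
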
 
\begin{proof}
By Lemma \ref{lem:tf&cf}, the bundle classified by $\iota\circ\phi^n \circ f_k$ is isomorphic to the bundle clutched by $\varphi_{k,n}$ as defined above. 
Notice that $AR_{k\theta}A=R_{-k\theta}$ and $A^2=I$, so that 
\[(R_{k\theta}A)^n=\left\{
\begin{matrix}
I&n\text{ even}\\
R_{k\theta}A&n\text{ odd, }n>0\\
AR_{-k\theta}&n\text{ odd, }n<0
\end{matrix}\right.\;\;\;\;\;\;\;\text{and}\;\;\;\;\;\;\;
R_{k\theta}^nA^n=\left\{
\begin{matrix}
R_{nk\theta}&n\text{ even}\\
R_{nk\theta}A&n\text{ odd} 
\end{matrix}\right.\]
Since $R_{k\theta}A=AR_{-k\theta}$, then for all $n$ odd $(R_{k\theta}A)^n=R_{k\theta}A$. The loops $\varphi_{k,n}$ in $O(2)$ now have the form 

\begin{align}
\begin{tikzpicture}
\draw{(-5,-1)} node{$n$\text{ odd}};
\draw{(-4.7,0)} node{$\theta=\pi$};
\draw(-3,0) circle(1cm);
\draw(-4,0) node {$-$};
\draw(-2,0) node {$-$};
\draw(-1.3,0) node {$\theta=0$};
\draw[blue,thick,->](-2,0) arc (0 : 40 : 1);
\draw[blue,thick,->](-2,0) arc (0 : -40 : 1);
\draw(-2.9,1.5) node {$R_{nk\theta}A$};
\draw(-2.9,-1.5) node {$R_{k\theta}A$};
\draw(1,-1) node {$n$\text{ even}};
\draw(1.3,0) node {$\theta=\pi$};
\draw(3,0) circle (1cm);
\draw(4,0) node {$-$};
\draw(2,0) node {$-$};
\draw(4.7,0) node {$\theta=0$};
\draw[blue,thick,->](4,0) arc (0 : 40 : 1);
\draw[blue,thick,->](4,0) arc (0 : -40 : 1);
\draw(3.1,1.5) node {$R_{nk\theta}$};
\draw(3,-1.5) node {$I$};
\end{tikzpicture}\label{pic:cf2}
\end{align}
Let us define, for any $k$, $\gamma_{k}\colon[0,\pi]\to SO(2)$ as $\gamma_{k}(\theta)=R_{k\theta}$. When $n$ is even, $\varphi_{k,n}$ actually lies in $SO(2)$, and thus the homotopy class of the classifying map of the bundle clutched by $\varphi_{k,n}$ is given by $\deg(\gamma_{nk})=nk/2$. 

Now say $n$ is odd. Define  $\gamma_{(n-1)k} A\co [0,\pi]\to O(2)$ to be the point-wise product $(\gamma_{(n-1)k}A) (t) = \gamma_{(n-1)k} (t) \cdot A$.
Let $\xi\co [0, \pi]\to C_1 \cap (C_2\cup C_3)$ be the map
$\xi(\theta) = (0, \cos(2\theta), \sin (2\theta))$ (so that $\xi$ maps the interval $[0,\pi]$ once around the domain of $\varphi_{k,n}$, in counterclockwise orientation with respect to the above picture). Then $\gamma_{(n-1)k}\co [0,\pi]\to SO(2) \subset O(2)$ factors through $\xi$, and hence determines a loop in $O(2)$ with the same domain as $\varphi_{k,n}$; we will continue to denote the former loop by  $\gamma_{(n-1)k}$, and similarly for  $\gamma_{(n-1)k}A$.
From the picture above, we see that $\varphi_{k,n}$ is homotopic to $\gamma_{(n-1)k}A$,
and it follows that the $O(2)$--bundles $E_{\varphi_{k,n}}$ and $E_{\gamma_{(n-1)k}A}$  clutched by $\varphi_{k,n}$ and by 
$\gamma_{(n-1)k}A$ are isomorphic.
We claim that there is also an isomorphism  $E_{\gamma_{(n-1)k}A} \isom E_{\gamma_{(n-1)k}}$.
This will complete the proof, since the homotopy class of the classifying map of $E_{\gamma_{(n-1)k}}$ is given by $\deg(\gamma_{(n-1)k}) = \frac{(n-1)k}{2}$.

To prove the claim, let $X$ be a space decomposed as a union $X= C\cup D$, and let $G$ a group. Then for any function $\gamma\co C \cap D \to G$ and any $a\in G$, we claim that there is a $G$--equivariant homeomorphism $\overline{L}$ from the quotient space 
\[E_{\gamma a} := (C\cross G \sqcup D\cross G)/{(x,\gamma(x) a g)_C \sim (x,g)_D}\]
to the quotient space
\[E_\gamma := (C\cross G \sqcup D\cross G)/{(x,\gamma(x) g)_C \sim (x,g)_D}\] 
(where $x\in C\cap D$ and the subscripts indicate the factor of the disjoint union containing the element, and $G$ acts by right multiplication in the $G$ factors).
Indeed, the $G$--equivariant homeomorphism
\[L\co C\cross G \sqcup D\cross G \maps C\cross G \sqcup D\cross G\]
defined by
$L((x,g)_C) = (x,g)_C$  and $L((x,g)_{D}) = (x,ag)_{D}$
and its inverse $L^{-1}$ 
respect the equivalence relations and hence descend to give inverse homeomorphisms 
\[\xymatrix{\overline{L} \co E_{\gamma a} \ar@<0.5ex>[r] & E_{\gamma} \co \overline{L^{-1}}. \ar@<0.5ex>[l]}\]  
\end{proof}

Now we describe the behaviour of the power operations $\phi^n$ on the cohomology of $SO(2)$, which will be useful for our calculations.

\begin{lemma}\label{lem:powerophomi}
For every $n \in \Z$, 
the map $(\phi^{n})^*\colon H^2(BSO(2);\Z)\to H^2(BSO(2);\Z)$ is multiplication by $n$. 
\end{lemma}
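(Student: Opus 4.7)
The plan is to reduce the computation to the behavior of the $n$--th power map on $\pi_1(SO(2))$, using that $SO(2)$ is abelian. Since $SO(2)$ is an abelian Lie group, the inclusion $\iota\co \Bcom SO(2)\to BSO(2)$ is a homeomorphism, and on the simplicial level the power operation $(\phi^n)_\bullet$ is exactly the map of nerves induced by the group homomorphism $p_n\co SO(2)\to SO(2)$ sending $g$ to $g^n$. Therefore $\phi^n\co BSO(2)\to BSO(2)$ is (up to homotopy) the classifying map $B(p_n)$ of the homomorphism $p_n$.

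First I would compute the effect of $\phi^n$ on $\pi_2$. Naturality of the long exact sequence of homotopy groups applied to the map of fibrations
\[
\xymatrix{
SO(2) \ar[r] \ar[d]_{p_n} & ESO(2) \ar[r] \ar[d] & BSO(2) \ar[d]^{\phi^n} \\
SO(2) \ar[r] & ESO(2) \ar[r] & BSO(2)
}
\]
gives a commutative square in which the vertical arrows are $(\phi^n)_*$ and $(p_n)_*$, and the horizontal arrows are the connecting isomorphisms $\partial\co \pi_2(BSO(2))\xrightarrow{\isom}\pi_1(SO(2))$. Under the chosen identification of generators (as in the paragraph preceding the proposition), $(p_n)_*\co \pi_1(SO(2))\to \pi_1(SO(2))$ is multiplication by $n$, because $p_n$ is a degree--$n$ self-map of $SO(2)\heq S^1$. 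Hence $(\phi^n)_*\co \pi_2(BSO(2))\to \pi_2(BSO(2))$ is also multiplication by $n$.

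Finally, since $BSO(2)\heq \CP^\infty$ is simply-connected with $\pi_2\isom \Z$, the Hurewicz homomorphism $\pi_2(BSO(2))\to H_2(BSO(2);\Z)$ is an isomorphism, so $(\phi^n)_*$ is multiplication by $n$ on $H_2(BSO(2);\Z)\isom \Z$. By the universal coefficient theorem, the dual map $(\phi^n)^*$ on $H^2(BSO(2);\Z)\isom\Z$ is also multiplication by $n$.

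There is no real obstacle here; the only point to be careful about is matching the generator of $\pi_2(BSO(2))$ fixed before the proposition with the generator of $H^2(BSO(2);\Z)$ under the composition of Hurewicz and universal coefficients, but once one checks that both identifications are natural with respect to maps of fibrations $SO(2)\to ESO(2)\to BSO(2)$, the result follows immediately from the degree of $p_n$ on $S^1$.
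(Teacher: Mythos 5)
Your proof is correct, and it shares the paper's opening reduction (Hurewicz plus universal coefficients, to reduce the claim to $(\phi^n)_*$ on $\pi_2(BSO(2))$), but the core computation is done by a genuinely different route. The paper exploits that $B\mu$ makes $BSO(2)$ a topological abelian group: under this structure $\phi^n=B(p_n)$ is the $n$-fold power map of the H-space $BSO(2)$, so by the Eckmann--Hilton argument it acts as multiplication by $n$ on $\pi_2$. You instead use that $SO(2)$ is abelian to identify $\Bcom SO(2)=BSO(2)$ and $\phi^n=B(p_n)$ for the homomorphism $p_n(g)=g^n$, and then compute via naturality of the long exact sequence of the universal fibration $SO(2)\to ESO(2)\to BSO(2)$: the connecting isomorphism intertwines $(\phi^n)_*$ on $\pi_2(BSO(2))$ with $(p_n)_*$ on $\pi_1(SO(2))$, which is multiplication by $n$ because $z\mapsto z^n$ has degree $n$ on the circle. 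Your version is slightly more elementary and concrete (no appeal to the H-space structure or Eckmann--Hilton, everything reduced to the degree of a self-map of $S^1$), while the paper's version is shorter and displays $\phi^n$ uniformly as the $n$-th power map of a topological abelian group, which handles all $n\in\Z$ and all homotopy groups at once. One small simplification to your write-up: since ``multiplication by $n$'' on an infinite cyclic group is independent of the choice of generator, there is no need to match the generator of $\pi_2(BSO(2))$ fixed before Proposition~\ref{prop:cco2b} with a generator of $H^2(BSO(2);\Z)$; naturality of Hurewicz and of the universal coefficient sequence already suffices.
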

\begin{proof}
By the Hurewicz Theorem and the Universal Coefficient Theorem, it suffices to prove the corresponding statement regarding the effect of $(\phi^{n})_*$ on $\pi_2(BSO(2))$. Writing $\mu\co SO(2)\cross SO(2)\to SO(2)$ for the addition map, $B\mu$ makes $BSO(2)$ a topological abelian group, and addition in $\pi_2(BSO(2))$ is induced by pointwise addition of maps $S^2\to BSO(2)$ (by the Eckmann--Hilton argument). It follows that $B(\phi^n)$ induces multiplication by $n$ in $BSO(2)$ and consequently in  $\pi_2(BSO(2))$.
\end{proof}

Any $SO(2)$-bundle has a standard algebraic TC structure given by the composition of its classifying map and the inclusion $j\co BSO(2)\to\Bcom O(2)$ induced by the inclusion $SO(2)\injects O(2)$. Above we have specified a generator of $\pi_2 (BO(2))$. The natural map $BSO(2) \to BO(2)$ is an isomorphism on $\pi_2$, so we obtain a corresponding choice of generator for $\pi_2 (BSO(2))$ 
and hence a bijection $\pi_2 (BSO(2)) \isom \Z$ (which we will treat as an identification).
For each $m\in \Z$, let $g_m\co S^2\to BSO(2)$ denote a representative of $m\in \pi_2 (BSO(2))$. Then $[j\circ g_m]$ is an algebraic TC structure on the $O(2)$--bundle $E_m$ classified by the composite $S^2\xmaps{g_m} BSO(2) \to BO(2)$.

Now we can compute the effects of the maps $f_k\colon S^2\to \Bcom O(2)$ in integral cohomology. This will allow us to construct non-trivial TC structures on bundles over other closed, connected surfaces.

\begin{lemma}\label{lem: jg=phif}
For every $k\in \Z$, $[j\circ g_k]= [\phi^2\circ f_k]$ in $\pi_2(\Bcom O(2))$.
\end{lemma}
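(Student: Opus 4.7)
My plan is to exploit the fact that squaring in $O(2)$ always lands in $SO(2)$, since $\det(A^2) = \det(A)^2 = 1$ for every $A \in O(2)$. Thus the simplicial map $(\phi^2)_\bullet \co C_\bullet(O(2)) \to C_\bullet(O(2))$ takes values in $C_\bullet(SO(2))$, and because $SO(2)$ is abelian we have $C_\bullet(SO(2)) = (NSO(2))_\bullet$, with realization $BSO(2)$. The inclusion $C_\bullet(SO(2)) \hookrightarrow C_\bullet(O(2))$ realizes to $j \co BSO(2) \to \Bcom O(2)$, so I obtain a factorization $\phi^2 = j \circ \wt{\phi}$ for some map $\wt{\phi} \co \Bcom O(2) \to BSO(2)$.

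Consequently $\phi^2 \circ f_k = j \circ (\wt{\phi} \circ f_k)$, so both classes $[\phi^2 \circ f_k]$ and $[j \circ g_k]$ lie in the image of $j_* \co \pi_2(BSO(2)) \to \pi_2(\Bcom O(2))$. Since $\iota \circ j \co BSO(2) \to BO(2)$ induces an isomorphism on $\pi_2$ (by the choice of generator described just before the statement), $j_*$ is injective on $\pi_2$. It therefore suffices to show that $[\wt{\phi} \circ f_k] = [g_k]$ in $\pi_2(BSO(2))$.

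For this, I compose once more with $\iota \circ j$ and use the injectivity of $(\iota \circ j)_*$ on $\pi_2$, reducing the problem to an equality inside $\pi_2(BO(2))$. The class $(\iota \circ j)_*[g_k]$ equals $k$ by the very definition of $g_k$ and of the chosen generator of $\pi_2(BO(2))$. The class $(\iota \circ j)_*[\wt{\phi} \circ f_k] = [\iota \circ \phi^2 \circ f_k]$ equals $nk/2 = k$ by Proposition~\ref{prop:cco2b} with $n=2$. Since both classes are equal to $k$, the proof is complete.

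The main conceptual step is recognizing the factorization $\phi^2 = j \circ \wt{\phi}$, which in turn is just the observation that squares in $O(2)$ are rotations. Once this factorization is noted, there is no real obstacle: the rest is a diagram chase combined with the degree computation already carried out in Proposition~\ref{prop:cco2b}.
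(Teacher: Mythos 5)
Your argument is correct and follows essentially the same route as the paper: factor $\phi^2\circ f_k$ through $j\co BSO(2)\to \Bcom O(2)$ and then compare with $g_k$ by applying $(\iota\circ j)_*$, which is injective on $\pi_2$, using Proposition~\ref{prop:cco2b} with $n=2$. The only (harmless, slightly cleaner) difference is that you factor the map $\phi^2$ itself through $BSO(2)$, via the observation that every square in $O(2)$ lies in $SO(2)$, whereas the paper obtains the factorization by squaring the specific cocycle entries $_k\alpha_{ij}$.
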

\begin{proof}
Recall that $f_k=|N_k\alpha|\circ p^{-1}$, where $_k\alpha = \{_k\alpha_{ij}\}$ is the commutative cocycle in (\ref{tcs:S2}), and  $p^{-1}$ is a homotopy inverse to the projection $|N\C|\to X$ described above. The effect of post-composing with $\phi^2$ can be seen at the level of simplicial spaces. Since $_k\alpha_{12}^2(\theta)=(R_{k\theta})^2=R_{2k\theta}$, $_k\alpha_{23}^2(x)=A^2=I$ and $_k\alpha_{13}^2(\theta)=(R_{k\theta}A)^2=I$, we can conclude that $\phi^2_\bullet\circ N_k\alpha\co N\U\to C(O(2))$ has image contained in $NSO(2)$. 
So $\phi^2\circ f_k$ factors through $j\co BSO(2)\to \Bcom O(2)$. Writing this factorization as $ j \circ h_k $, it suffices to show that $[h_k]=[g_k]$ in $\pi_2(BSO(2))$. By Proposition \ref{prop:cco2b}, we have $ k= [\iota\circ\phi^2\circ f_k] = (\iota \circ j)_*[h_k]$. Since $\iota\circ j\colon BSO(2)\to BO(2)$ is the inclusion, $(\iota\circ j)_*$ is an isomorphism in $\pi_2$, and by definition of $g_k$, we have $(\iota\circ j)_*[g_k]=k$ as well. The result now follows.  
\end{proof}

In \cite{AnCGrVi}, the authors show the existence of a  class $r\in H^2(\Bcom O(2);\Z)$ satisfying
\begin{align}
j^*(r)=2e\label{eq:r&e}
\end{align} 
where $e\in H^2(BSO(2);\Z)$ is the Euler class of the universal oriented $SO(2)$-bundle.
The element $g_1^*(e) \in H^2 (S^2; \Z)$ is a generator, and yields an identification $H^2 (S^2; \Z)\isom \Z$.
Under this identification, we have 
\begin{align}e(E_m) := g_m^* (e) = m.\label{eq:g_m&e}
\end{align} 

\begin{lemma}\label{lemm:clrEul}
For each $k\in \Z$, we have
\[f_k^*(r)=k.\]
\end{lemma}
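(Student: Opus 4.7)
The plan is to use the factorization in Lemma~\ref{lem: jg=phif} to transfer the computation of $f_k^*(r)$ to the composite $j\circ g_k$, whose effect on $r$ is immediately accessible via \eqref{eq:r&e} and \eqref{eq:g_m&e}. By Lemma~\ref{lem: jg=phif}, $[j\circ g_k] = [\phi^2\circ f_k]$ in $\pi_2(\Bcom O(2))$, so (as in Lemma~\ref{lem:powerophomi}, via the Hurewicz theorem and the Universal Coefficient Theorem) the induced homomorphisms $(j\circ g_k)^*, (\phi^2\circ f_k)^*\colon H^2(\Bcom O(2);\Z)\to H^2(S^2;\Z)$ agree. Pulling back $r$ both ways and using naturality:
\[
f_k^*\bigl((\phi^2)^*(r)\bigr) = (\phi^2\circ f_k)^*(r) = (j\circ g_k)^*(r) = g_k^*(2e) = 2k.
\]

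The next step is to replace $(\phi^2)^*(r)$ by $2r$. Applying $j^*$, using property~(2) of the power operations (so that $\phi^2\circ j = j\circ\phi^2$), and invoking Lemma~\ref{lem:powerophomi}, I find
\[
j^*\bigl((\phi^2)^*(r)\bigr) = (\phi^2)^*(j^*(r)) = (\phi^2)^*(2e) = 4e = j^*(2r).
\]
Hence $(\phi^2)^*(r) - 2r$ lies in $\ker\bigl(j^*\colon H^2(\Bcom O(2);\Z)\to H^2(BSO(2);\Z)\bigr)$, and the desired identity $(\phi^2)^*(r) = 2r$ will follow from a sufficient understanding of this kernel, which should be provided by the explicit analysis of $H^2(\Bcom O(2);\Z)$ carried out in~\cite{AnCGrVi}.

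Granting this, combining the two steps yields $2f_k^*(r) = 2k$, and since $H^2(S^2;\Z)\cong\Z$ is torsion-free I conclude $f_k^*(r) = k$.

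The main obstacle is the second step. The maps $f_k$ do not themselves factor through $BSO(2)$ (the cocycle entry ${}_k\alpha_{23}=A$ is a reflection), so one cannot read off $f_k^*(r)$ directly from $j^*(r) = 2e$; one has to pass to $\phi^2\circ f_k$ first and then ``divide by $2$,'' and the legitimacy of that division rests on the promotion of the modulo-$\ker(j^*)$ identity to a genuine equality. An alternative route that would bypass this identity altogether would be to show directly that $f_k^*(\delta) = 0$ for every $\delta\in\ker(j^*|_{H^2})$, but either way some external input on $H^2(\Bcom O(2);\Z)$ beyond what appears in this excerpt is needed.
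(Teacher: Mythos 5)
Your first step coincides exactly with the paper's: use Lemma~\ref{lem: jg=phif} together with (\ref{eq:r&e}) and (\ref{eq:g_m&e}) to get $(\phi^2\circ f_k)^*(r)=2k$, then divide by $2$ after showing $(\phi^2)^*(r)=2r$. The problem is that the division step is precisely where the content of the lemma lies, and you leave it conditional. Applying $j^*$ only shows $(\phi^2)^*(r)-2r\in\ker(j^*)$, and this kernel is genuinely nonzero in degree $2$: it contains the torsion class $W_1=\iota^*\beta(w_1)$ (the pullback of the integral Bockstein of $w_1$), so the congruence modulo $\ker(j^*)$ does not by itself yield the equality, and you supply no argument that the error term vanishes (or that $f_k^*$ kills it). As written, the proof is incomplete at its crucial point.

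For comparison, the paper closes this gap as follows. By \cite{AnCGrVi}, $H^2(\Bcom O(2);\Z)$ is generated by $r$ and $W_1$, so one may write $(\phi^2)^*(r)=mr+\varepsilon_1 W_1$; your $j^*$ computation gives $m=2$ (since $j^*(W_1)=0$). To kill $\varepsilon_1$, restrict along $k\colon BO(1)^2\to\Bcom O(2)$: by \cite{AnCGrVi} one has $k^*(r)=0$ and, by naturality of the Bockstein, $k^*(W_1)=\beta(u)+\beta(v)\neq 0$; since $\phi^2$ commutes with $k$ and squaring is trivial on $O(1)^2$, the identity $k^*\circ(\phi^2)^*=(\phi^2)^*\circ k^*$ forces $\varepsilon_1(\beta(u)+\beta(v))=0$, hence $\varepsilon_1=0$ and $(\phi^2)^*(r)=2r$. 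Your proposed alternative route would also work, and in fact more cheaply, once the same input from \cite{AnCGrVi} is invoked: knowing that $\ker(j^*)$ in degree $2$ is generated by $W_1=\iota^*\beta(w_1)$, Lemma~\ref{lem:to2b} gives $f_k^*(W_1)=(\iota\circ f_k)^*\beta(w_1)=0$ because $\iota\circ f_k$ is nullhomotopic, so $f_k^*$ annihilates the error term without any $BO(1)^2$ computation. But in either version the structure of $H^2(\Bcom O(2);\Z)$ must actually be cited and the corresponding computation carried out; deferring it to ``a sufficient understanding of this kernel'' is the missing step.
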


\begin{proof}
By Lemma~\ref{lem: jg=phif},
in integral cohomology $(\phi^2\circ f_k)^*$  has the same values as
\[H^2(\Bcom O(2);\Z)\xrightarrow{j^*}H^2(BSO(2);\Z)\xrightarrow{g_k^*}H^2(S^2;\Z)\]
which by (\ref{eq:r&e}) and (\ref{eq:g_m&e}) maps $r\mapsto 2k$. We claim that $(\phi^2)^*(r) = 2r$ in $H^2(\Bcom O(2);\Z)$. In \cite{AnCGrVi} it is shown that $H^2(\Bcom O(2);\Z)$ is generated by the classes $r$ and $W_1$, the latter being the image under $\iota^*\colon H^2(BO(2);\Z)\to H^2(\Bcom O(2);\Z)$ of the Bockstein of $w_1\in H^1(BO(2);\F_2)$. Let us write $(\phi^2)^*(r)=mr+\varepsilon_1W_1$, with $m\in \Z$ and $\varepsilon_1\in \{0,1\}$. Lemma \ref{lem:powerophomi} and (\ref{eq:r&e}) give $m=2$. Now consider the inclusion $k\colon BO(1)^2\to \Bcom O(2)$. In \cite{AnCGrVi}, they show as well that $k^*(r)=0$, and since $k^*(w_1)=u+v$, where $u,v$ are the degree 1 generators of the polynomial algebra $H^*(BO(1)^2;\F_2)\cong \F_2[u,v]$, naturality of the Bockstein $\beta$ yields $k^*(W_1)=\beta(u)+\beta(v)$. Now $k^*\circ(\phi^2)^*=(\phi^2)^*\circ k^* $ implies that $\varepsilon_1(\beta(u)+\beta(v))=0$ in $H^2(BO(1)^2;\Z)$. Therefore $\varepsilon_1=0$, which proves our claim. Then under $(\phi^2\circ f_k)^*$, $r\mapsto 2f^*_k(r)$, so we can conclude that $f_k^*(r)=k$. 
\end{proof}

\begin{remark}
Lemma \ref{lem:to2b} says that for any $k$, $\iota\circ f_k$ is nullhomotopic, but we can deduce form Lemma \ref{lemm:clrEul} that for $k\ne0$, $f_k$ is not.
\end{remark}

\begin{example}
  Let $\Sigma$ be an orientable, closed, connected surface and $c\colon\Sigma\to S^2$ the map that collapses the complement of an open disk in $\Sigma$ to a point. Since $\Sigma$ is orientable, $c$ is a degree 1 map. 
Lemma \ref{lemm:clrEul} shows that for every integer $k$,  we have 
$$c^* f_k^*(r)= c^*(e(E_k)) = k c^*(e(E_1)).$$
Moreover, $e(E_1) \in H^2 (S^2; \Z)$ is a generator (by definition), and hence so is $c^*(e(E_1))$ $\in H^2(\Sigma;\Z)$.
Thus the TC structures $f_k\circ c$ all lie in different homotopy classes. 

Similarly, if $\Sigma$ is a closed, connected, non-orientable surface, we can choose a map $c\colon\Sigma\to S^2$ as above, and $c$ will have degree 1 in cohomology with  coefficients in $\Z/2$, and we find that for $k$ odd, $c^*f_k^*(\bar{r})$ generates $H^2 (\Sigma; \F_2)$, where $\bar{r}$ is the image of $r$ in $\F_2$--cohomology.
In particular, we find that the TC structures 
$f_k \circ c$
are non-trivial when $k$ is odd.
\end{example}

In \cite{AnCGrVi} it is shown that $\pi_2(\Bcom O(2))\cong \Z^3$. 
In addition to the TC structures $f_k\colon S^2\to\Bcom O(2)$ on the trivial bundle, we  also have the standard algebraic TC structures $j\circ g_m$ considered in Lemma~\ref{lem: jg=phif}.
It is an interesting question whether combinations of these TC structures yield all homotopy classes in $\pi_2(\Bcom O(2))$. To be more precise about these combinations, we have 
\[S^2\xrightarrow{\text{pinch}}S^2\vee S^2\xrightarrow{f_k\vee g_m}{\Bcom O(2)\vee BSO(2)}\xrightarrow{Id,j}\Bcom O(2)\]
which is nothing but the sum in $\pi_2(\Bcom O(2))$. Motivated by this, we will denote the above TC structure by $f_k+(j\circ g_m)$. Here we prove that each pair of integers $(k,m)$ yields a distinct class $[f_k]+[(j\circ g_m)]\in \pi_2(\Bcom O(2))$.

\begin{proposition}\label{thm:csEk}
Let $E_m$ be an $O(2)$-bundle over $S^2$ with a choice of orientation given by $m\in\pi_2(BSO(2))$. Then for any $k\in \bbZ$, $f_k+(j\circ g_m)$ is a TC structure on $E_m$. Moreover, all of these TC structures on $E_m$ are different.
\end{proposition}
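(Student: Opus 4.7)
My plan is to verify the two assertions in sequence, leveraging the cohomological calculation in Lemma~\ref{lemm:clrEul} together with Lemma~\ref{lem:to2b}.

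For the first assertion, I need to show that $\iota \circ (f_k + (j\circ g_m))\colon S^2 \to BO(2)$ is a classifying map for $E_m$. Since composition with $\iota$ induces a group homomorphism $\iota_*\co \pi_2(\Bcom O(2))\to \pi_2(BO(2))$, and the ``sum'' of TC structures was defined via the pinch map so as to realize addition in $\pi_2$, we have
\[
[\iota\circ(f_k + (j\circ g_m))] \;=\; \iota_*[f_k] + \iota_*[j\circ g_m] \;\in\; \pi_2(BO(2)).
\]
By Lemma~\ref{lem:to2b}, $\iota\circ f_k$ is nullhomotopic, so the first term vanishes. The second term is $[\iota\circ j\circ g_m]$, which by construction of $E_m$ is its classifying map. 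Hence $\iota\circ(f_k + (j\circ g_m))$ classifies $E_m$, as desired.

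For the second assertion, I will distinguish the classes $[f_k + (j\circ g_m)]\in\pi_2(\Bcom O(2))$ for varying $k$ by pulling back the class $r\in H^2(\Bcom O(2);\Z)$ introduced in \cite{AnCGrVi}. Since pullback along a sum of maps (defined via the pinch construction) induces a sum in cohomology, we have
\[
(f_k + (j\circ g_m))^*(r) \;=\; f_k^*(r) + g_m^*(j^*(r)).
\]
By Lemma~\ref{lemm:clrEul} the first summand equals $k$, and by equation~(\ref{eq:r&e}) together with equation~(\ref{eq:g_m&e}) the second summand equals $g_m^*(2e) = 2m$, under the identification $H^2(S^2;\Z)\cong\Z$. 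Thus
\[
(f_k + (j\circ g_m))^*(r) \;=\; k + 2m.
\]
If the TC structures $f_k + (j\circ g_m)$ and $f_{k'} + (j\circ g_m)$ were homotopic for some $k\neq k'$, then pulling back $r$ would give $k+2m = k'+2m$, a contradiction. Hence the TC structures are pairwise distinct.

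The main (minor) obstacle here is simply to be careful that addition in $\pi_2(\Bcom O(2))$ via the pinch construction indeed behaves compatibly with both pullback in cohomology and with the map $\iota_*$; both statements are immediate from the definition of the sum as the composite $S^2 \xrightarrow{\mathrm{pinch}} S^2 \vee S^2 \to \Bcom O(2)$, since $H^*$ and $\iota_*$ each send a wedge to a (co)product and the pinch map to a (co)diagonal.
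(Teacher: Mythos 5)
Your proof is correct. The first half (that $f_k+(j\circ g_m)$ lifts a classifying map of $E_m$) is exactly the paper's argument: additivity of post-composition with $\iota$ under the pinch-sum, Lemma~\ref{lem:to2b} to kill the $f_k$ term, and the definition of $E_m$ for the other term. For the distinctness half you diverge from the paper: you detect the classes by pulling back the integral class $r$, using $(f_k+(j\circ g_m))^*(r)=f_k^*(r)+g_m^*(j^*(r))=k+2m$ via Lemma~\ref{lemm:clrEul} and equations (\ref{eq:r&e}), (\ref{eq:g_m&e}), whereas the paper composes with $\iota\circ\phi^{-1}$ and reads off the class $-k-m$ in $\pi_2(BO(2))$ using Proposition~\ref{prop:cco2b} together with $\phi^{-1}\circ j=j\circ\phi^{-1}$ and Lemma~\ref{lem:powerophomi}. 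Both detections are legitimate and both invariants separate the classes for fixed $m$ and varying $k$; your appeal to additivity of pullbacks along the pinch construction is the standard co-H-space fact and is fine. Since Lemma~\ref{lemm:clrEul} precedes this proposition in the paper, there is no circularity in using it. The trade-off is mostly aesthetic: your route leans on the characteristic class $r$ from \cite{AnCGrVi} (whose computation on $f_k$ was itself obtained via $\phi^2$), while the paper's route stays at the level of homotopy groups of $BO(2)$ and showcases the power operation $\phi^{-1}$ directly, which is the tool reused later in the stability argument (Proposition~\ref{thm:scEcomO}).
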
 
\begin{proof}
Let $h_m\co S^2\to BO(2)$ be a classifying map of $E_m$. Then $\iota\circ (j\circ g_m)\simeq h_m$ and by Lemma \ref{lem:to2b}, $\iota\circ f_k$ is homotopic to the constant map so that indeed, $f_k+(j\circ g_m)$ is a lift of $h_m$ to $\Bcom O(2)$. To see that all these lifts are in different homotopy classes in $\pi_2(\Bcom O(2))$, consider the compositions $(\iota\circ\phi^{-1})\circ (f_k+(j\circ g_m))$.
Notice that the inclusion $j$ commutes with the map $\phi^{-1}$ since $j$ is induced from the homomorphism $SO(2)\hookrightarrow O(2)$. Therefore $\phi^{-1}\circ j\circ g_k = j\circ \phi^{-1}\circ g_k$. By Lemma \ref{lem:powerophomi}, $\phi^{-1}_*[g_m]=-[g_m]$. Then by Proposition \ref{prop:cco2b} we see that in $\pi_2(BO(2))$ we have
\[[(\iota\circ\phi^{-1})\circ (f_k+(j\circ g_m))]=-k-m,\]
which are distinct for different values of $k$.
\end{proof}

\section{Real commutative $K$-theory of surfaces: Additive structure}\label{sec: real}

In this section we prove one of our main results,  computing the group $\rKOcom(\Sigma)$ for closed connected surfaces. The key ingredient is to compute the homotopy groups $\pi_2(\Bcom O(n))$ for $n\geq 3$ together with the homomorphisms $\pi_2(\Bcom O(2))\to \pi_2(\Bcom O(n))$ induced by the canonical inclusions $O(2)\hookrightarrow O(n)$.   

\subsection{Stability} \label{sec: stability}

We study how the classes $f_k$ considered in the previous section behave as we stabilize using the inclusions $O(2)\injects O(n)$ given  by $X\mapsto 
\left(\begin{matrix}X&0\\
0&I_{n-2}
\end{matrix}
\right)$.  These maps induce maps $i_n\co BO(2)\to BO(n)$ and $j_n \co \Bcom O(2)\to \Bcom O(n)$.

\begin{proposition}\label{thm:scEcomO}  
If $k$ is odd and $n\geq 3$, the homotopy class of 
$$j_n \circ f_k\co S^2\to\Bcom O(n)$$
  is non-trivial in $\pi_2(\Bcom O(n))$.
\end{proposition}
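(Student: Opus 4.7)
The plan is to argue by contradiction, exploiting the power operation $\phi^{-1}$. The essential contrast is between Lemma~\ref{lem:to2b}, which says $\iota\circ f_k$ is null, and Proposition~\ref{prop:cco2b}, which shows $\iota\circ\phi^{-1}\circ f_k$ is \emph{not} null: applying that proposition with exponent $-1$ (the odd case, giving $(n-1)k/2=-k$) yields $[\iota\circ\phi^{-1}\circ f_k]=-k$ in $\pi_2(BO(2))\cong\Z$. After stabilizing, the parity of this integer will survive, producing an obstruction precisely when $k$ is odd.

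Concretely, suppose for contradiction that $j_n\circ f_k\simeq *$. Since $\phi^{-1}$ is a based self-map of $\Bcom O(n)$, we get $\phi^{-1}\circ j_n\circ f_k\simeq *$. Because $j_n$ is induced from the Lie group inclusion $O(2)\hookrightarrow O(n)$, property~(2) of the power operations gives $\phi^{-1}\circ j_n = j_n\circ\phi^{-1}$. Applying $\iota_n$ and using naturality of $\iota$ in the group (so that $\iota_n\circ j_n = i_n\circ\iota$), we would obtain
\[i_n\circ\iota\circ\phi^{-1}\circ f_k\;\simeq\;*\]
in $\pi_2(BO(n))$, where $i_n\co BO(2)\to BO(n)$ denotes the stabilization map.

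To derive the contradiction, I would identify the map $(i_n)_*\co\pi_2(BO(2))\to\pi_2(BO(n))$ with reduction modulo $2$, i.e.\ the quotient $\Z\to\Z/2$. This is a standard calculation: a full planar rotation generates $\pi_1(SO(2))\cong\Z$, and under $SO(2)\hookrightarrow SO(n)$ for $n\geq 3$ it maps to the non-trivial element of $\pi_1(SO(n))\cong\Z/2$ (the Dirac belt trick). Combined with $[\iota\circ\phi^{-1}\circ f_k]=-k$, this gives $[i_n\circ\iota\circ\phi^{-1}\circ f_k]=-k\bmod 2$, which is non-zero precisely when $k$ is odd, contradicting the displayed nullhomotopy.

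The conceptual hurdle is recognizing that although $f_k$ represents a trivial bundle and is therefore invisible to $\iota$, its image under the involution $\phi^{-1}$ classifies a non-trivial bundle, and the fact that $\phi^{-1}$ intertwines with the stabilization $j_n$ promotes this non-triviality into the stable range. The only technical point is the mod-$2$ identification of $(i_n)_*$, which is routine.
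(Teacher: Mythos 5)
Your argument is correct and is essentially the paper's own proof: both consider the composite $\iota_n\circ\phi^{-1}\circ j_n\circ f_k = i_n\circ\iota_2\circ\phi^{-1}\circ f_k$, use Proposition~\ref{prop:cco2b} (with exponent $-1$) to identify $[\iota_2\circ\phi^{-1}\circ f_k]=-k$ in $\pi_2(BO(2))$, and note that $(i_n)_*$ is reduction mod $2$, so the composite is essential exactly when $k$ is odd. The only cosmetic difference is that you phrase it as a contradiction while the paper argues directly, and you justify the mod-$2$ identification via $\pi_1(SO(2))\to\pi_1(SO(n))$ rather than the fibrations $S^m\to BO(m)\to BO(m+1)$; both are standard.
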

\begin{proof}
Fix $n\geq 3$.  
We have a commutative diagram
\[
\xymatrix{\Bcom O(2)\ar[r]^{j_n}\ar[d]^{\iota_2}&\Bcom O(n)\ar[d]^{\iota_n}\\
BO(2)\ar[r]^{i_n}&BO(n)
}
\] 
in which $\iota_2$ and $\iota_n$ are the natural inclusions, and $j_n$ is the map on $\Bcom(-)$ induced by $i_n$.
By Lemma \ref{lem:to2b}, $[\iota_2 \circ f_k]$ is a trivial class in $\pi_2(BO(2))$, so that $(i_n)_*[\iota_2 \circ f_k]$ is also trivial in $\pi_2(BO(n))$, which is the same class as $(\iota_n)_*[ j_n \circ f_k]$. 

To study the homotopy class of $j_n \circ f_k\co S^2\to\Bcom O(n)$ in $\pi_2(\Bcom O(n))$, let $\eta$ be the composition
\[S^2\xrightarrow{j_n\circ f_k} \Bcom O(n)\xrightarrow{\phi^{-1}}\Bcom O(n)\xrightarrow{\iota_n}BO(n).\]
We claim that the homotopy class of $\eta$ is the nontrivial element in $\pi_2(BO(n))=\Z/2$ if and only if $k$ is odd. Notice that this would finish the proof, since our claim forces $[j_n\circ f_k]$ to be non-trivial as long as $k$ is odd. 

The map  $j_n$ is induced by a homomorphism, so by naturality it commutes with $\phi^{-1}$. Now we see that 
\[\eta=\iota_n \circ \phi^{-1} \circ j_n\circ f_k =
\iota_n \circ j_n \circ  \phi^{-1} \circ f_k = 
i_n\circ \iota_2\circ  \phi^{-1}\circ f_k.\]
 Proposition \ref{prop:cco2b} implies that the homotopy class of $\iota_2\circ \phi^{-1}\circ f_k$ in $\pi_2(BO(2))$ is represented by the integer $-k$. Thus to prove our claim we only need to recall how $i_n$ behaves on $\pi_2(-)$. The long exact sequence of homotopy groups of the fibrations $S^m\to BO(m)\to BO(m+1)$ imply that  $(i_3)_*\colon\pi_2(BO(2))\to \pi_2(BO(3))$ is the reduction mod 2 homomorphism, and for $m>2$, $\pi_2(BO(m))\to \pi_2(BO(m+1))$ is an isomorphism. 
It follows that $(i_n)_*\colon\pi_2( BO(2))\to \pi_2 (BO(n))$ is reduction mod 2 as well.
Hence $[\eta]=(i_n)_*[\iota_2 \circ \phi^{-1}\circ f_k]\in \pi_2(BO(n))$ is a generator if and only if $k$ is odd. 
\end{proof}

\subsection{Computing $\pi_2(\Bcom SO(n))$ and $\pi_2(\Bcom O(n))$}

Recall that in ${C}_\bullet(G)$, the simplicial model for $\Bcom G$, the face maps $d_i\co {C}_n(G)\to {C}_{n-1}(G)$ are given by 
\[d_i(g_0,...,g_n)=\left\{\begin{matrix}
(g_1,...,g_n)&i=0\\
(g_0,...,g_{i-1}g_i,...,g_n)&0<i<n\\
(g_0,...,g_{n-1})&i=n.
\end{matrix}\right.\]
These maps induce a simplicial abelian group structure on $H_q(C_\bullet(G))$ for any $q\geq0$.

\begin{lemma}\label{lem:H2Bcom}
Let $G$ be a connected Lie group. Then \[H_2(\Bcom G;\Z)=H_2H_0(C_\bullet(G))\oplus H_1(G;\Z).\]
\end{lemma}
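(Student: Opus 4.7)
My plan is to apply the first-quadrant spectral sequence associated to the geometric realization of the (proper) simplicial space $C_\bullet(G)$,
\[E^2_{p,q}=\pi_p H_q(C_\bullet(G);\Z)\Longrightarrow H_{p+q}(\Bcom G;\Z),\]
in which $\pi_p$ of a simplicial abelian group denotes the $p$-th homology of its Moore complex (via Dold--Kan). The goal is to identify the three entries on the diagonal $p+q=2$ and argue that the resulting filtration splits as in the statement.

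First I would compute the relevant $E^2$-terms. Since $C_0(G)=\{e\}$, the entire $p=0$ column above the bottom row vanishes; in particular $E^2_{0,2}=0$ and $E^2_{0,1}=0$. By definition $E^2_{2,0}=H_2H_0(C_\bullet(G))$. The central computation is $E^2_{1,1}=H_1(G;\Z)$: the Moore boundary $\partial=(d_0)_*-(d_1)_*+(d_2)_*\co H_1(C_2(G))\to H_1(G)$ sends a class represented by a loop $t\mapsto(\alpha(t),\beta(t))$ of commuting pairs to $[\beta]-[\alpha\beta]+[\alpha]$, which vanishes in $H_1(G)$ because $G$ is a connected topological group and so the Pontryagin product on $H_1$ coincides with the group sum: $[\alpha\beta]=[\alpha]+[\beta]$.

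Second I would track differentials. The outgoing $d_2\co E^2_{2,0}\to E^2_{0,1}=0$ is zero, and all further outgoing differentials from $E^2_{2,0}$ and $E^2_{1,1}$ fall outside the first quadrant. Thus $E^\infty_{2,0}=H_2H_0(C_\bullet(G))$, while $E^\infty_{1,1}$ is the quotient of $H_1(G)$ by the image of $d_2\co E^2_{3,0}\to E^2_{1,1}$. Convergence then yields a short exact sequence
\[0\to E^\infty_{1,1}\to H_2(\Bcom G;\Z)\to H_2H_0(C_\bullet(G))\to 0,\]
and the remaining work is to show that $E^\infty_{1,1}$ equals all of $H_1(G)$ and that this sequence splits.

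Third, I would achieve both of these simultaneously using the natural inclusion $\iota\co\Bcom G\to BG$. The 1-skeleton of $|C_\bullet(G)|$ is the reduced suspension $\Sigma G$, and its inclusion $\Sigma G\hookrightarrow\Bcom G$ realizes the edge homomorphism $H_1(G)=H_2(\Sigma G)\to E^\infty_{1,1}\hookrightarrow H_2(\Bcom G)$. Post-composing with $\iota_*$ recovers the standard skeletal map $\Sigma G\to BG$ on $H_2$, which is the Hurewicz--adjunction isomorphism $H_1(G)=\pi_1(G)\isom\pi_2(BG)\isom H_2(BG)$ (using that $BG$ is simply connected when $G$ is connected). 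Hence the composite $H_1(G)\to H_2(\Bcom G)\xrightarrow{\iota_*}H_2(BG)$ is an isomorphism; this forces $E^\infty_{1,1}=H_1(G)$ (so the incoming $d_2$ vanishes) and exhibits a natural splitting $H_2(\Bcom G;\Z)=H_1(G)\oplus\ker\iota_*$ with the complementary summand isomorphic to $H_2H_0(C_\bullet(G))$ by the filtration. The main obstacle I expect is the bookkeeping to identify the edge homomorphism out of $E^2_{1,1}$ with the 1-skeleton inclusion and to match the composite $\Sigma G\to BG$ on $H_2$ with the transgression of the universal bundle $G\to EG\to BG$; once this is set up, everything else is formal.
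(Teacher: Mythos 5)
Your proposal is correct and follows essentially the same route as the paper: the homology spectral sequence of the proper simplicial space $C_\bullet(G)$, the vanishing of the $p=0$ column, the identification $E^2_{1,1}\cong H_1(G;\Z)$ via the fact that multiplication induces addition on $H_1$ of a connected group, and a comparison with $BG$ to dispose of the incoming $d_2$ and split the extension. The only cosmetic difference is that you implement the comparison through the $1$-skeleton $\Sigma G\hookrightarrow \Bcom G$ and the Hurewicz theorem, whereas the paper uses the map of spectral sequences induced by $\iota_\bullet\co C_\bullet(G)\to NG$; both versions are sound.
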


\begin{proof}
Let us write $H_m(X)$ for singular homology with integral coefficients. We use the homology spectral sequence $E^r_{p,q}$ associated to the simplicial space $C_\bullet(G)$ that strongly converges to $H_{p+q}(|C_\bullet(G)|)=H_{p+q}(\Bcom G)$. Since $C_\bullet(G)$ is proper (see \cite[Appendix]{Ad1}), the $E^2$-page is given by $E_{p,q}^2=H_p(H_q(C_{\bullet}(G)))$ where for each $q$, $H_q(C_\bullet(G))$ is regarded as a chain complex. Let us write $A_n=H_q(C_n(G))$. Then $E_{p,q}^2=H_p((A_*,\partial))$ is the $p$-th homology group of the (un-normalized) chain complex $(A_*, \partial)$ with differentials given by $\partial=\sum_{i=0}^n(-1)^i(d_i)_*$. To compute $H_2(\Bcom G)$ we only need to worry about $E_{0,2}^2,E_{1,1}^2$ and $E_{2,0}^2$. We study each of these cases separately. \\

$\bullet$ $E_{0,2}^2=H_0H_2(C_\bullet(G))$. In this case $A_0=H_2(C_0(G))=H_2(pt)=0$. Therefore $E_{0,2}^2=H_0((A_*,\partial))=0$.\\

$\bullet$ $E_{1,1}^2=H_1H_1(C_\bullet(G))$. The chain complex is now $A_n=H_1(C_n(G))$. The differential $\partial\co A_1\to A_0$ is zero, since $d_0$ and $d_1$ are constant maps at simplicial level 1. We claim that $\partial\co A_2\to A_1$ is zero. To see this, we analyze the effect in $H_1$ of the respective face maps $d_i\co G\times G\to G$ for the nerve of $G$ at simplicial level 2. For $i=0,2$ the induced homomorphisms are just the projections on the first and second coordinate respectively. For $i=1$, $d_1$ is the product in $G$, so $(d_1)_*$ is the addition map in $H_1(G)$. Therefore the alternating sum of $(d_i)_*$'s is zero. Now then, the differential $\partial$ factors through
\[H_1(C_2(G))\to H_1(G\times G)\xrightarrow{0} H_1(G)\]
and now we see that $\partial$ must be the zero homomorphism. Therefore $E_{1,1}^2=H_1(G)$.\\

$\bullet$ The term $E_{2,0}^2=H_2H_0(C_\bullet(G))$ by definition. \\

To finish our proof we need to show that $E^2_{p,q}=E^\infty_{p,q}$ when $p+q=2$ and that there are no extension problems. The differential $d_2\co E_{2,0}^2\to E^2_{0,1} = 0$ must be zero and thus $E_{2,0}^2$ survives in the $E^\infty$-page. To see that $E^2_{1,1}=E^\infty_{1,1}$, we will show that the differential $d_2\co E_{3,0}^2\to E_{1,1}^2$ is zero. Consider the simplicial map arising by inclusions $\iota_\bullet\co C_\bullet(G)\to NG$. Let $\mathbb{E}_{p,q}^r$ denote the spectral sequence associated to the nerve $NG$. Then $\iota_\bullet$ induces a map of spectral sequences $\iota_*\co E_{p,q}^r\to\mathbb{E}_{p,q}^r$. Notice that $\iota_1\co C_1(G)=G\to G$ is the identity map which itself induces the identity isomorphism $E_{1,1}^2=H_1(H_1(C_\bullet(G)))\to \mathbb{E}_{1,1}^2=H_1(H_1(NG))$. Since $\iota_*$ commutes with $d_2$ it is enough to see that $d_2\colon \mathbb{E}_{3,0}^2 \to \mathbb{E}^2_{1,1}$ is zero. Since $G$ is connected, it follows that $(H_0(NG),\partial)$ is an exact chain complex (except in degree 0). Then $\mathbb{E}_{3,0}^2=H_3H_0(NG)=0$ and $d_2$ is zero.

Now we verify that there are no extension problems. We have that $H_2(BG)\cong \mathbb{E}_{1,1}^2$ (clearly $H_0(H_2(NG))=0$ and $H_2(H_0(NG))=0$ by similar arguments as before). Then $\iota_\bullet$ induces a morphism of extensions
\[\xymatrix{0\ar[r]&E_{1,1}^2\ar@{=}[d]\ar[r]&H_2(\Bcom G)\ar[d]^{\iota_*}\ar[r]&E_{2,0}^2\ar[r]&0\\
&\mathbb{E}^{2}_{1,1}\ar[r]^{\cong}&H_2(BG),}\]
and by commutativity of the diagram, $E_{1,1}^2\to H_2(\Bcom G)$ splits.
\end{proof}

\begin{lemma}\label{lem:H2Bcomso3}
$H_2(\Bcom SO(3);\Z)\cong\Z/2\oplus\Z/2$.
\end{lemma}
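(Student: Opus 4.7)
The plan is to apply Lemma~\ref{lem:H2Bcom} with $G = SO(3)$, reducing the problem to the direct sum decomposition
\[H_2(\Bcom SO(3);\Z) \cong H_2 H_0(C_\bullet(SO(3))) \oplus H_1(SO(3);\Z).\]
Since $\pi_1(SO(3)) = \Z/2$ is abelian, Hurewicz gives $H_1(SO(3);\Z) = \Z/2$ immediately. The remaining task is to show that $H_2 H_0(C_\bullet(SO(3))) = \Z/2$.

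The first key step is an explicit description of the path components of $C_n(SO(3))$. Every commuting tuple lies in a maximal abelian subgroup of $SO(3)$, and such subgroups are either maximal tori $T \cong SO(2)$ or Klein four-groups $V \cong (\Z/2)^2$. A tuple in some maximal torus deforms to $(e,\ldots,e)$ via the scaling $(g_i^t)$, and a tuple in $V$ whose entries fail to generate $V$ also lies in a maximal torus; so all such tuples belong to the identity component of $C_n(SO(3))$. The remaining ``exceptional'' components correspond to tuples in $V$ that generate $V$; since all such $V$ are $SO(3)$-conjugate with $N_{SO(3)}(V)/V \cong S_3$ acting by permutations of the three nontrivial elements of $V$, we identify $\pi_0 C_n(SO(3)) = \{*\} \sqcup E_n$, where $E_n$ denotes the set of $S_3$-orbits of exceptional $n$-tuples.

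Next, because face and degeneracy maps preserve identity components, there is a short exact sequence of simplicial abelian groups
\[0 \to \Z \to \Z[\pi_0 C_\bullet(SO(3))] \to \Z[E_\bullet] \to 0\]
whose constant left term has no homology in positive degrees. Hence $H_2 H_0(C_\bullet(SO(3))) \cong H_2(\Z[E_\bullet])$, which will be computed from the chain complex directly. A count shows $E_0 = E_1 = \emptyset$, $|E_2| = 1$ (with representative $(a,b)$ for $a \neq b$ nontrivial in $V$), and $|E_3| = 42/6 = 7$ (the $S_3$-action on the $42$ exceptional triples in $V$ is free). The induced differential $\bar\partial_3 \co \Z^7 \to \Z$ is computed on the seven orbit representatives by applying the face maps in $V$ and projecting onto the unique $E_2$-class --- killing any outcome that lands in the identity component. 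Only the orbits of $(a,a,b)$ and $(a,b,b)$ give nonzero contributions, each producing $\pm 2$, so $\im \bar\partial_3 = 2\Z$ and $H_2(\Z[E_\bullet]) = \Z/2$.

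The principal obstacle is justifying the component description: showing that two exceptional tuples in $V^n$ lie in the same path component of $C_n(SO(3))$ \emph{only} when they are $S_3$-related requires an argument about $SO(3)$-conjugation orbits on commuting tuples and the classification of maximal abelian subgroups. Once that is in place, enumerating the seven orbits in $E_3$ and evaluating the face-map differentials on representatives is routine bookkeeping.
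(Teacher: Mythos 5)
Your proposal is correct and follows essentially the same route as the paper: reduce via Lemma~\ref{lem:H2Bcom} to computing $H_2H_0(C_\bullet(SO(3)))$, use the component structure of $C_2(SO(3))$ and $C_3(SO(3))$ (one, resp.\ seven, exceptional components arising from Klein four-groups --- a fact the paper imports from Torres Giese--Sjerve and Antol\'in-Camarena--Villarreal rather than reproving, which is exactly the step you flag as the main obstacle), and observe that only the $(x,x,y)$- and $(x,y,y)$-type orbits contribute $\pm 2$ to the degree-3 differential, matching the paper's Tables 1 and 2. Your passage to the quotient complex $\Z[E_\bullet]$ by the identity-component subcomplex is only a mild repackaging of the paper's direct computation of $\ker\partial_1/\im\partial_2$ inside $\Z\oplus\Z$.
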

\begin{proof}
The group $H_1(SO(3);\Z)=\Z/2$ so that by Lemma \ref{lem:H2Bcom} we only need to show that the group $H_2H_0(C_\bullet(SO(3)))=\Z/2$. This case is more interesting, since $C_n(SO(3))$ is not path connected for $n\geq2$ (see for example \cite[Theorem 2.4]{TorSjerve}). We follow the notation of Lemma \ref{lem:H2Bcom}. We are interested in the sequence
\[A_3=H_0(C_3(SO(3)))\xrightarrow{\partial_2} A_2=H_0(C_2(SO(3)))\xrightarrow{\partial_1}A_1= H_0(SO(3)).\]
By \cite[Theorem 2.4]{TorSjerve}, $C_2(SO(3))$ has 2 connected components and $C_3(SO(3))$ has 8.  Let $\Hom(\Z^n,SO(3))_\one$ denote the connected component of $C_n(SO(3))$ that contains the trivial representation $\one\co \Z^n\to SO(3)$ (which is represented by the $n$-tuple $(I,...,I)$). The connected components of $C_n(SO(3))$ are either $\Hom(\Z^n,SO(3))_\one$ or homeomorphic to $SU(2)/Q_8$, where $Q_8$ is the quaternion group. The face maps restrict to the components containing $\one$, that is, $d_i\co \Hom(\Z^n,SO(3))_\one\to\Hom(\Z^{n-1},SO(3))_\one$. Since $\partial_1$ restricted to the homology of each component is the alternating sum of three identity homomorphisms, the differential $\partial_1$ sends both generators $(1,0)$ and $(0,1)$ in 
\[A_2=H_0(\Hom(\Z^2,SO(3))_\one)\oplus H_0(SU(2)/Q_8)\cong\Z\oplus\Z\] 
to the generator in $A_1=\Z$. Thus $\ker\partial_1=\langle(-1,1)\rangle$. 

Now then, the differential $\partial_2$ restricted to the summand $H_0(\Hom(\Z^3,SO(3))_\one)$ of $A_3$ is the alternating sum of 4 identity homomorphisms which is then the zero homomorphism. Now we analyze the values of the face maps on the the connected components that do not contain $\one$. 
Each such component has the form $\{g(x_1, x_2, x_3) g^{-1}: g\in SO(3)\}$ for some $3$--tuple $(x_1, x_2, x_3)\in SO(3)^3$ with $\langle x_1, x_2, x_3\rangle \isom D_4 = \Z/2\oplus\Z/2$
(this follows from the arguments in \cite[Section 3]{AnCVi}).
To write canonical representatives of these components, choose elements distinct, non-identity elements $c_1,c_2, c_3 \in SO(3)$ such that $\langle c_1, c_2, c_3\rangle \isom D_4$ (so $c_3 = c_1 c_2$). The action of the quotient group $N_{SO(3)}(D_4)/D_4\cong\Sigma_3$ by conjugation on $D_4$ is by permuting the elements $c_j$.  The 7 remaining components in $C_3(SO(3))$ (all homeomorphic to $SU(2)/Q_8$) are then represented by the 3-tuples
\[(c_1,c_2,I),(c_1,c_2,c_2),(I,c_2,c_3),(c_2,c_2,c_3),(c_1,I,c_3),(c_1,c_2,c_1),(c_1,c_2,c_3)\] 
To compute the values of $(d_i)_*$ on these components, we only need to know the connected component of $C_2(SO(3))$ where the image of $d_i$ lands. For our purposes let us write the obvious identities $c_1c_2=c_3$, $c_1c_3=c_2$ and $c_2c_3=c_1$ in $D_4$. Using that the pairs of the form $(I,c_j)$, $(c_j,I)$ and $(c_j,c_j)$ are in $\Hom(\Z^2,SO(3))_\one$ for $1\leq j\leq 3$, it is easy to check that the alternating sum of $(d_i)_*$, with $0\leq i\leq 3$ at each of the 5 components $H_0(SU(2)/Q_8)$ represented by $(c_1,c_2,I),(I,c_2,c_3),(c_1,I,c_3),(c_1,c_2,c_1)$ and $(c_1,c_2,c_3)$ is zero. For the ones represented by $(c_1,c_2,c_2)$ and $(c_2,c_2,c_3)$, the image of the alternating sum of $(d_i)_*$ is the ideal generated by $(-2,2)$. We include both computations in Tables \ref{tab:table1} and \ref{tab:table2}, where it can be seen that the alternating sum of $(d_i)_*$ is as claimed. We conclude that $E_{2,0}^2=\langle(-1,1)\rangle/\langle(-2,2)\rangle=\Z/2$.
\end{proof}

\begin{table}
\begin{center}
\caption{$\partial$ restricted to $H_0(SU(2)/Q_8)$ with zero image}
    \label{tab:table1}
\begin{tabular}{|c|c|c|c|c|c|}
 \hline
             &$(c_1,c_2,I)$ & $(c_1,c_2,c_1)$ & $(I,c_2,c_3)$ & $(c_1,I,c_3)$  & $(c_1,c_2,c_3)$\\ 
 \hline
 $(d_0)_*$   &$(1,0)$ &$(0,1)$ &$(0,1)$ &$(1,0)$ &$(0,1)$ \\  
 \hline
 $(d_1)_*$   &$(1,0)$ &$(0,1)$ &$(0,1)$ &$(0,1)$ &$(1,0)$ \\   
 \hline
 $(d_2)_*$   &$(0,1)$ &$(0,1)$ &$(1,0)$ &$(0,1)$ &$(1,0)$ \\ 
 \hline
 $(d_3)_*$   &$(0,1)$ &$(0,1)$ &$(1,0)$ &$(1,0)$ &$(0,1)$ \\  
 \hline
\end{tabular}
\end{center}
\end{table}

\begin{table}
\begin{center}
\caption{$\partial$ restricted to $H_0(SU(2)/Q_8)$ with non-zero image}
    \label{tab:table2}
\begin{tabular}{|c|c|c|}
 \hline
             &$(c_1,c_2,c_2)$ & $(c_2,c_2,c_3)$\\ 
 \hline
 $(d_0)_*$   &$(1,0)$ & $(0,1)$  \\  
 \hline
 $(d_1)_*$   &$(0,1)$ & $(1,0)$  \\   
 \hline
 $(d_2)_*$   &$(1,0)$ & $(0,1)$  \\ 
 \hline
 $(d_3)_*$   &$(0,1)$ & $(1,0)$  \\  
 \hline
\end{tabular}
\end{center}
\end{table}

\begin{proposition}\label{prop:pi2BcomOn}
\
\begin{enumerate}

\item $\pi_2(\Bcom SO(3))=\pi_2(\Bcom O(3))=\Z/2\oplus\Z/2$;

\item For $n\geq2$, the inclusions $\Bcom O(n)\to\Bcom O(n+1)$ are 2-connected and for $n\geq3$ they induce isomorphisms in $\pi_2$, and

\item For any $n\geq 3$, the inclusions $\Bcom SO(n)\to\Bcom SO(n+1)$ induce isomorphisms in $\pi_2$.

\end{enumerate}
\end{proposition}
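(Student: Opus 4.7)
The plan. I would establish parts (1), (2), (3) in order, with part (1) serving as the base case for parts (2) and (3).

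For part (1), I would apply the Hurewicz theorem. Because $SO(3)$ is path connected, the $1$--skeleton of $C_\bullet SO(3)$ is built from a single vertex and a family of $1$--cells indexed by $SO(3)$, and each such loop is homotopic through paths in $SO(3)$ to the loop at the identity, which is degenerate. Hence $\pi_1(\Bcom SO(3))=0$, and the Hurewicz map gives $\pi_2(\Bcom SO(3))\cong H_2(\Bcom SO(3);\Z)\cong \Z/2\oplus \Z/2$ by Lemma~\ref{lem:H2Bcomso3}. Since $3$ is odd, $-I\in O(3)$ has determinant $-1$ and is central, so $(A,\pm 1)\mapsto \pm A$ realizes $SO(3)\times \Z/2$ as an isomorphic copy of $O(3)$. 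The functor $\Bcom$ sends direct products to products, so $\Bcom O(3)\simeq \Bcom SO(3)\times B\Z/2$ and therefore $\pi_2(\Bcom O(3))\cong \Z/2\oplus \Z/2$.

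For part (2), the inclusions induce the identity on $\pi_0$ (both spaces are path connected) and on $\pi_1\cong \pi_0(O(n))=\Z/2$. For surjectivity on $\pi_2$ when $n=2$, I would exhibit two independent classes in $\pi_2(\Bcom O(3))\cong \Z/2\oplus \Z/2$ coming from $\pi_2(\Bcom O(2))$: the class $[j_3\circ f_1]$, which is nontrivial by Proposition~\ref{thm:scEcomO} and lies in $\ker(\iota_3)_*$, and the class $[j_3\circ j\circ g_1]$, whose image under $(\iota_3)_*$ is the nontrivial generator $w_2\in \pi_2(BO(3))=\Z/2$ (this composite classifies $E_1\oplus\epsilon^1$ over $S^2$, and $w_2(E_1)=1\bmod 2$). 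For isomorphism on $\pi_2$ when $n\geq 3$, both source and target are $\Z/2\oplus \Z/2$ (by part (1) together with the inductive application of this argument), and the same pair of classes $[j_n\circ f_1]$ and $[j_n\circ j\circ g_1]$ stabilizes to the corresponding generators in $\pi_2(\Bcom O(n+1))$, so the stabilization sends a generating pair to a generating pair and is therefore an isomorphism.

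For part (3), I would run the Hurewicz strategy once more: since $SO(n)$ is connected, $\Bcom SO(n)$ is simply connected and $\pi_2(\Bcom SO(n))\cong H_2(\Bcom SO(n);\Z)$. The spectral sequence used in Lemma~\ref{lem:H2Bcom} splits this as $H_2H_0(C_\bullet SO(n))\oplus H_1(SO(n))$; the $H_1$ summand is $\Z/2$ and is manifestly stable for $n\geq 3$. The representative $D_4$--valued cycles for $H_2H_0(C_\bullet SO(3))$ identified in the proof of Lemma~\ref{lem:H2Bcomso3} take values in a finite subgroup of $SO(3)$ and hence persist in $SO(n)$ for all $n\geq 3$ via the natural inclusions. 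An alternative is to use the commuting square
\[\xymatrix{\Bcom SO(n)\ar[r]\ar[d] & \Bcom SO(n+1)\ar[d]\\ \Bcom O(n)\ar[r] & \Bcom O(n+1)}\]
together with part (2): when $n$ is odd, $O(n)\cong SO(n)\times \Z/2$ forces the left vertical arrow to be an isomorphism on $\pi_2$, and a diagram chase then reduces the remaining (even) case to injectivity of the right vertical arrow on $\pi_2$.

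The main obstacle will be in part (3), specifically ruling out that new components of $C_k(SO(n))$ for $n\geq 4$ contribute new classes in $H_2H_0(C_\bullet SO(n))$ beyond the single $\Z/2$ visible in $SO(3)$. Equivalently, in the $SO\to O$ comparison route, one must establish injectivity on $\pi_2$ of $\Bcom SO(n)\hookrightarrow \Bcom O(n)$ for even $n$, where $O(n)=SO(n)\rtimes \Z/2$ is a non-trivial semidirect product and the product decomposition used in part (1) is no longer available. Everything else (Hurewicz, the spectral sequence, and the explicit identification of generators) is routine once Lemma~\ref{lem:H2Bcomso3} and Proposition~\ref{thm:scEcomO} are in hand.
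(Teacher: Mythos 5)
Your part (1) agrees with the paper, but part (2) contains a genuine gap, and it is in fact the same obstacle you flag for part (3): you never establish that the stabilization maps $\pi_2(\Bcom O(n))\to\pi_2(\Bcom O(n+1))$ are surjective for $n\geq 3$, equivalently that $\pi_2(\Bcom O(n))$ remains a quotient of $\Z/2\oplus\Z/2$ beyond $n=3$. Exhibiting the two classes $[j_{n+1}\circ f_1]$ and $[j_{n+1}\circ j\circ g_1]$ only bounds the image of the stabilization from below; if $\pi_2(\Bcom O(n+1))$ were larger than $\Z/2\oplus\Z/2$, nothing in your argument would detect it, since the spectral-sequence computation of Lemma~\ref{lem:H2Bcomso3} is specific to $SO(3)$ (it rests on the classification of components of commuting tuples in $SO(3)$) and, as you yourself note, does not extend to $n\geq 4$. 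So the phrase ``both source and target are $\Z/2\oplus\Z/2$ by part (1) together with the inductive application of this argument'' is circular: the inductive step needs an upper bound on the target group which you have not supplied, and consequently neither the 2-connectivity claim for $n\geq 3$ nor the isomorphisms in parts (2) and (3) are actually proved.

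The paper supplies exactly this missing input by a different mechanism. Since $C_\bullet(G)$ is a proper simplicial space, $\Bcom G$ is equivalent to the fat realization $\|C_\bullet(G)\|$, and the connectivity criterion of Ebert--Randal-Williams reduces 2-connectivity of $\Bcom O(n)\to\Bcom O(n+1)$ to showing that $C_k(O(n))\to C_k(O(n+1))$ is $(2-k)$-connected for $k=0,1,2$; the only nontrivial case is $k=2$, where Higuera Rojo's theorem gives a bijection $\pi_0(C_2(O(n)))\to\pi_0(C_2(O(n+1)))$ for $n\geq 3$ (with a corresponding statement for $SO(n)$). This yields surjectivity on $\pi_2$ for all $n\geq 3$, after which your two explicit classes (via Proposition~\ref{thm:scEcomO}) do pin the groups down to $\Z/2\oplus\Z/2$ and force the surjections to be isomorphisms, exactly as in the paper. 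For part (3) the paper then combines this surjectivity with the splitting $\Bcom O(2k+1)\simeq B\{\pm I\}\times\Bcom SO(2k+1)$ at odd stages and part (2) to sandwich the even stages; this bypasses both of the routes you sketch (recomputing $H_2H_0(C_\bullet SO(n))$ for $n\geq 4$, or proving injectivity of $\pi_2(\Bcom SO(n))\to\pi_2(\Bcom O(n))$ for even $n$), neither of which your proposal completes.
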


\begin{proof}
(1) Since $SO(3)$ is path-connected, $\Bcom SO(3)$ is simply connected. By the Hurewicz Isomorphism Theorem and Lemma \ref{lem:H2Bcomso3}, $\pi_2(\Bcom SO(3))\cong\Z/2\oplus\Z/2$. Recall that $O(3)\cong \{\pm I\}\times SO(3)$, so that $\Bcom O(3)\cong B\{\pm I\}\times\Bcom SO(3)$. Since $B\{\pm I\}$ is an Eilenberg-MacLane space of type $K(\Z/2,1)$, we see that
\begin{align}
\pi_2(\Bcom O(3))\cong\Z/2\oplus\Z/2.\label{eq:p2o3so3}
\end{align}

(2)-(3) We claim that for any $n\geq3$ the inclusions $\Bcom O(n)\to \Bcom O(n+1)$ are 2-connected (note, in particular,  that this implies  surjectivity at the level of $\pi_2$). We have already mentioned that $C_\bullet(G)$ is a proper simplicial space for any Lie group $G$, and by \cite[Appendix A]{Segal}, the \emph{fat} geometric realization $\|C_\bullet(G)\|$ is equivalent to $\Bcom G$. Then by \cite[Lemma 2.4]{EbRW} to prove our claim for the orthogonal groups, we only need to show that $C_k(O(n))\to C_k(O(n+1))$ is $2-k$ connected for $k=0,1$ and 2. The case $k=0$ is trivial since $C_0(G)=\{pt\}$. For $k=1$, it is well known that the inclusions $O(n)\to O(n+1)$ induce isomorphisms in fundamental groups and bijections at the level of $\pi_0$. By \cite[Theorem 1.1]{HRojo} we have a bijection $\pi_0(C_2(O(n)))\to\pi_0(C_2(O(n+1)))$ whenever $n\geq 2^2-1=3$. Therefore $\pi_2(\Bcom O(n))\to\pi_2(\Bcom O(n+1))$ is surjective. 

Similarly, for every $n\geq3$, using the well known behavior of the inclusions $SO(n)\to SO(n+1)$ in $\pi_1$ and \cite[Corollary 1.5]{HRojo} we see that 
$$\Bcom SO(n)\maps \Bcom SO(n+1)$$ is also 2-connected. Therefore 
$\pi_2(\Bcom SO(n))\to\pi_2(\Bcom SO(n+1))$ is surjective as well.

To prove part $(2)$, first notice that part (1) and Proposition \ref{thm:scEcomO} for $n=3$ imply that $\pi_2(\Bcom O(2))\to \pi_2(\Bcom O(3))$ is surjective. The isomorphism (\ref{eq:p2o3so3}) and surjectivity at $\pi_2$ of the inclusions $\Bcom O(n)\to\Bcom O(n+1)$ imply that the groups $\pi_2(\Bcom O(n))$ are all quotients of $\Z/2\oplus\Z/2$. By Proposition \ref{thm:scEcomO}, for any $n\geq 3$ we see two distinct non-trivial group elements in $\pi_2(\Bcom O(n))$ (e.g. $[j_n\circ f_1]$ and $[j_n\circ(j\circ g_1)]$ as in the notation of Theorem \ref{thm:csEk} and Proposition \ref{thm:scEcomO}). Thus the only possible quotient is $\Z/2\oplus\Z/2$, and then the isomorphisms in part (2) follow.  

Naturality of the isomorphisms $\{\pm I\}\times SO(2n+1)\cong O(2n+1)$ with respect to the aforementioned inclusions and the induced isomorphisms at the level of $\pi_2$ also give the claimed isomorphisms in part (3). 
\end{proof}

\begin{corollary}\label{cor:H2bcomon}
For every $n\geq 2$, the inclusions $\Bcom O(n)\to\Bcom O(n+1)$ induce isomorphisms $H^2(\Bcom O(n+1);\F_2)\xrightarrow{\cong}H^2(\Bcom O(n);\F_2)$.
\end{corollary}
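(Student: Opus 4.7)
The plan is to combine Proposition \ref{prop:pi2BcomOn} with an explicit $\F_2$-dimension count to sandwich $\dim_{\F_2}H^2(\Bcom O(n);\F_2)$ at $3$ for every $n\geq 2$.

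By Proposition \ref{prop:pi2BcomOn}(2), each inclusion $\Bcom O(n)\to \Bcom O(n+1)$ is $2$-connected, so its homotopy fiber $F_n$ is $1$-connected.  The mod $2$ Serre spectral sequence $E_2^{p,q}=H^p(\Bcom O(n+1);H^q(F_n;\F_2))\Rightarrow H^{p+q}(\Bcom O(n);\F_2)$ then has vanishing $q=1$ row, and since no differential touches $E_r^{2,0}$, the edge map $H^2(\Bcom O(n+1);\F_2)\hookrightarrow H^2(\Bcom O(n);\F_2)$ is injective for every $n\geq 2$.  To pin down the common value, I would first compute $\dim_{\F_2} H^2(\Bcom O(3);\F_2)=3$ using the splitting $O(3)\cong \{\pm I\}\times SO(3)$, which gives $\Bcom O(3)\cong B(\Z/2)\times \Bcom SO(3)$.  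Since $\Bcom SO(3)$ is simply connected, Hurewicz together with Proposition \ref{prop:pi2BcomOn}(1) yields $H_2(\Bcom SO(3);\Z)=(\Z/2)^2$; UCT gives $H^1(\Bcom SO(3);\F_2)=0$ and $H^2(\Bcom SO(3);\F_2)=\F_2^2$, and K\"unneth produces $\dim H^2(\Bcom O(3);\F_2)=3$.

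The uniform lower bound $\dim_{\F_2} H^2(\Bcom O(n);\F_2)\geq 3$ for $n\geq 2$ I would obtain by exhibiting three natural, linearly independent classes:  the Stiefel--Whitney pullbacks $\iota^*(w_1^2)$ and $\iota^*(w_2)$, together with the mod $2$ reduction $\bar r$ of the class $r\in H^2(\Bcom O(n);\Z)$ of \cite{AnCGrVi}.  Pulling back along $k\co BO(1)^2\to \Bcom O(n)$ and using the identities $k^*\iota^*(w_1)=u+v$ and $k^*(r)=0$ from the proof of Lemma \ref{lemm:clrEul}, one sends $\iota^*(w_1^2), \iota^*(w_2), \bar r$ to $(u+v)^2, uv, 0$ in $H^2(BO(1)^2;\F_2)=\F_2[u,v]_2$; linear independence then follows since $\bar r$ is nonzero (as $r$ generates the free part of $H^2(\Bcom O(n);\Z)$) yet has trivial pullback, while the images of the first two are manifestly independent.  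For $n\geq 4$ the corresponding upper bound $\dim\leq 3$ is automatic by composing the injections above into $H^2(\Bcom O(3);\F_2)=\F_2^3$; for the base case $n=2$ I would import the matching upper bound from the explicit computation of $H^*(\Bcom O(2);\F_2)$ in \cite{AnCGrVi}.  Combining these bounds with the injections shows each map in the statement is an isomorphism.

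The main obstacle is this last upper bound at $n=2$, which relies on external input from \cite{AnCGrVi}; in principle one could replace this with a direct analysis of the Serre spectral sequence for $\Ecom O(2)\to \Bcom O(2)\to BO(2)$, where $H^2(\Ecom O(2);\F_2)\cong \F_2^2$ can be computed via Hurewicz from $\pi_2(\Ecom O(2))\cong \Z^2$, but one would then need to control both the $\pi_1 BO(2)=\Z/2$ action on this group and the transgression $d_3$ carefully enough to cut the contribution down to a single copy of $\F_2$.
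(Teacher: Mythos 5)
Your injectivity step and your computation at $n=3$ match the paper (the paper uses the five-term Serre exact sequence where you use the spectral sequence directly, and the same splitting $O(3)\cong\{\pm I\}\times SO(3)$ plus Hurewicz, UCT and K\"unneth). The problem is your lower bound $\dim_{\F_2}H^2(\Bcom O(n);\F_2)\geq 3$ for $n\geq 4$, which is the crux of the whole statement: the injections only give that the dimension is non-increasing in $n$, so without that lower bound nothing forces the maps to be onto. Your third class does not exist as you describe it. The reference \cite{AnCGrVi} constructs $r$ only in $H^2(\Bcom O(2);\Z)$; the existence of a compatible class on $\Bcom O(n)$ for $n\geq 3$ (the class $a_2$ of Definition~\ref{def: a_2}) is deduced in the paper \emph{from} Corollary~\ref{cor:H2bcomon}, so invoking ``the class $r\in H^2(\Bcom O(n);\Z)$'' here is circular. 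Moreover the justification you give --- that $r$ generates the free part of $H^2(\Bcom O(n);\Z)$ --- is false for $n\geq 3$: by Proposition~\ref{prop:pi2BcomOn}, $\pi_2(\Bcom O(n))\cong\Z/2\oplus\Z/2$ and $\Bcom SO(n)$ is simply connected, so $H_2$ is all torsion and (e.g.\ for $n=3$, by UCT) $H^2(\Bcom O(n);\Z)$ has no free part at all. With only $\iota^*(w_1^2)$ and $\iota^*(w_2)$ in hand, your lower bound is $2$, not $3$, and the sandwich collapses.

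The paper closes exactly this gap differently: it computes the dimension at \emph{every} odd stage, not just $n=3$, using $O(2m+1)\cong\{\pm I\}\times SO(2m+1)$, simple connectivity of $\Bcom SO(2m+1)$, and part (3) of Proposition~\ref{prop:pi2BcomOn} (stability of $\pi_2(\Bcom SO(n))$ for $n\geq 3$, which your argument never uses), giving $H^2(\Bcom O(2m+1);\F_2)\cong\F_2^3$ for all $m\geq 1$; the even cases are then squeezed between consecutive odd ones by the injectivity you already have, and the $n=2$ case is handled by the rank-$3$ computation from \cite{AnCGrVi} as in your proposal. If you replace your ``three independent classes'' step for $n\geq 4$ by this odd-stage computation, your argument becomes the paper's proof.
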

\begin{proof}
Part (2) of Proposition \ref{prop:pi2BcomOn} implies that for $n\ge2$, the homotopy fiber of the inclusions $\Bcom O(n)\to\Bcom O(n+1)$ is simply connected. Then the associated  five--term Serre exact sequence for cohomology (see \cite[Corollary 9.14]{DavisKirk} for the homology version) implies that 
$$H^2(\Bcom O(n+1);\F_2)\xrightarrow{}H^2(\Bcom O(n);\F_2)$$ 
is injective. It remains to show that all these groups have the same rank. Since $\Bcom SO(n)$ is simply connected, to compute its $\F_2$-cohomology for $n\geq 3$ we can use parts (1) and (3) of Proposition \ref{prop:pi2BcomOn}, and together with the K\"unneth formula we see that
\begin{align*}
H^2(\Bcom O(2n+1);\F_2)
 & \cong H^2(\Bcom SO(2n+1);\F_2)\oplus H^2(\RP^\infty;\F_2)\\
 &\cong (\F_2\oplus \F_2)\oplus \F_2.
\end{align*}
Hence for $n\geq 3$ and odd, the groups $H^2(\Bcom O(n);\F_2)$ have rank 3. Injectivity for all $n\geq 3$ implies now that $H^2(\Bcom O(n+1);\F_2)\xrightarrow{}H^2(\Bcom O(n);\F_2)$ are isomorphisms. By \cite[Theorem 5.1]{AnCGrVi}, $H^2(\Bcom O(2);\F_2)$ has rank 3 and thus $H^2(\Bcom O(3);\F_2)\xrightarrow{}H^2(\Bcom O(2);\F_2)$ is an isomorphism as well. 
\end{proof}

Corollary~\ref{cor:H2bcomon} allows us to define a new characteristic class $a_2$ for commutative bundles. We will use this class in several subsequent arguments.

\begin{definition}\label{def: a_2}
Let $\bar{r} \in H^2(\Bcom O(2);\F_2)$ denote the reduction mod 2 of $r\in H^2(\Bcom O(2);\Z)$ (as in (\ref{eq:r&e})).
By Corollary \ref{cor:H2bcomon}, there exists a unique class $a_2\in H^2(\Bcom O;\F_2)$ satisfying $i^*(a_2) = \bar{r}$, where $i\co \Bcom O(2)\injects \Bcom O$ is the inclusion. (The restriction of $a_2$ to a class on $\Bcom O(n)$ will also be denoted by $a_2$.)
\end{definition}

\subsection{Additive structure of $\rKOcom(\Sigma)$}\label{sec:add}

Gritschacher, using very different methods from those in the present paper, established an isomorphism 
\[\widetilde{KO}_\text{com}(S^2) \cong \Z/2 \oplus\widetilde{KO}(S^2) = \Z/2 \oplus\Z/2.\]
Using Proposition~\ref{prop:pi2BcomOn}, we now give a new proof of this isomorphism, and identify generators for this group arising from our transitionally commutative $O(2)$--bundles.

Let $E_0^{f_1}$ denote the trivial real vector bundle of rank 2 over $S^2$ with transitionally commutative structure $f_1$. Let $E_1$ be the oriented vector bundle classified by the generator $1\in \pi_2(BO(2))\isom \Z$, and let $E_1^{\st}$ denote $E_1$ with its algebraic TC structure $S^2\xmaps{g_1}BSO(2)\xmaps{j}\Bcom O(2)$. 

\begin{proposition} \label{prop:mainS2}
The commutative $K$--theory classes associated to $E_0^{f_1}$ and $E_1^{\st}$ generate the group $$\widetilde{KO}_\mathrm{com}(S^2)\cong \Z/2 \oplus \Z/2.$$ 
\end{proposition}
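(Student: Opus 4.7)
The plan is to identify $\rKOcom(S^2)$ with $\pi_2(\Bcom O)\cong \Z/2\oplus\Z/2$ and then separate the two $\Z/2$--summands using the forgetful map $\iota_*\co\rKOcom(S^2)\to\widetilde{KO}(S^2)\cong\Z/2$.

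First, since $S^2$ is compact, we have
\[\rKOcom(S^2)=\colim_n [S^2,\Bcom O(n)]=\colim_n \pi_2(\Bcom O(n)).\]
By Proposition \ref{prop:pi2BcomOn}(1)--(2), this colimit already stabilizes at $n=3$ with value $\Z/2\oplus\Z/2$, so it suffices to exhibit two linearly independent classes in this group of order $4$.

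Next, I would analyze the forgetful map $\iota_*$, which at the level of representing spaces is induced by the inclusion $\iota\co\Bcom O\to BO$. By Lemma \ref{lem:to2b}, $\iota\circ f_1\co S^2\to BO(2)$ is nullhomotopic, and naturality of the stabilization maps implies that the composite $S^2\xmaps{f_1}\Bcom O(2)\maps \Bcom O\xmaps{\iota} BO$ is also nullhomotopic; hence $[E_0^{f_1}]$ lies in $\ker \iota_*$. On the other hand, $[E_1^{\st}]$ forgets to the stable class of $E_1$, and since $\pi_2(BO(2))\to \pi_2(BO)\cong\Z/2$ is reduction mod $2$ (as noted in the proof of Proposition \ref{thm:scEcomO}), the image of $[E_1^{\st}]$ under $\iota_*$ is the non-trivial element of $\widetilde{KO}(S^2)$.

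Finally, I would apply Proposition \ref{thm:scEcomO} with $k=1$ to conclude that for any $n\geq 3$ the class of $j_n\circ f_1$ is non-trivial in $\pi_2(\Bcom O(n))$, so that $[E_0^{f_1}]$ is a non-zero element of $\rKOcom(S^2)$. Since $[E_0^{f_1}]$ is a non-zero element of $\ker\iota_*$ while $[E_1^{\st}]$ maps to a generator of $\widetilde{KO}(S^2)$, the two classes are $\F_2$--linearly independent, and therefore generate $\Z/2\oplus\Z/2$. The main conceptual work was already done in Propositions \ref{prop:pi2BcomOn} and \ref{thm:scEcomO}; only the naturality argument distinguishing the two generators via $\iota_*$ remains, and it is straightforward.
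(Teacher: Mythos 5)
Your proposal is correct and follows essentially the same route as the paper: identify $\rKOcom(S^2)$ with $\pi_2(\Bcom O)\cong\Z/2\oplus\Z/2$ via Proposition~\ref{prop:pi2BcomOn}, use Proposition~\ref{thm:scEcomO} to see that $[E_0^{f_1}]$ is non-zero, and separate the two classes by applying $\iota_*$ together with Lemma~\ref{lem:to2b} and the surjectivity of $\pi_2(BO(2))\to\pi_2(BO)$. The only point the paper treats more carefully is the identification of the unbased homotopy set with $\pi_2$ (the spaces $\Bcom O(n)$ are not simply connected): the paper passes to $[S^2,\Bcom O]\cong\pi_2(\Bcom O)$ using the H--space structure of $\Bcom O$, rather than equating $[S^2,\Bcom O(n)]$ with $\pi_2(\Bcom O(n))$ at each finite stage as you implicitly do.
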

\begin{proof} By Proposition~\ref{prop:pi2BcomOn}, $\pi_2(\Bcom O)\cong \Z/2\oplus\Z/2$. Since $\Bcom O$ is a (connected) H--space under block sum of commuting matrices, we have an isomorphism 
$$\widetilde{KO}_\mathrm{com}(S^2) = [S^2, \Bcom O] \isom \pi_2 (\Bcom O).$$
Next, note that any two distinct non-zero elements of  $\Z/2 \oplus \Z/2$ generate this group, so it suffices to show that the classes associated to $E_0^{f_1}$ and $E_1^{\st}$ are non-zero and distinct. The class associated to $E_0^{f_1}$ is non-zero by Proposition~\ref{thm:scEcomO}. The class associated to
$E_1^{\st}$ is non-zero since 
$$S^2\srm{g_1} BSO(2) \injects BO(2)$$
 is a generator of $\pi_2 (BO(2))$, and the map $\pi_2 (BO(2))\to \pi_2 (BO)$ is surjective. Since the composition $S^2 \srm{f_1} \Bcom O(2) \injects BO(2)$ is null-homotopic, these classes are distinct.
\end{proof}

 The inclusion $\iota\co \Bcom O\to BO$ induces, for each finite CW complex $X$, a map $\iota_* \co \rKOcom (X) \to \widetilde{KO} (X)$. The kernel of this map is the ``non-standard" part of the commutative $K$--theory of $X$. In~\cite{Ad3}, it is shown that there is a splitting of infinite loop spaces
$$\Bcom O \simeq BO \times \Ecom O.$$
In fact, it follows from~\cite{Ad3} that the natural map 
\begin{align} \label{eq:ns}[X, \Ecom O]\to \ker(\iota_*)\end{align}
is an isomorphism.

 Let $i\colon \Bcom O(2)\to \Bcom O$ be the inclusion, and for a closed connected surface $\Sigma$, let $c\colon \Sigma\to S^2$ denote the map that collapses the 1-skeleton of the standard cell decomposition of $\Sigma$ (with a single 2-cell).

\begin{theorem}\label{thm: main}  
Let $\Sigma$ be a closed connected surface. Then \[\rKOcom(\Sigma)\cong \widetilde{KO}(\Sigma)\oplus \Z/2,\] 
and the kernel of $\iota_* \co \rKOcom(\Sigma)\to \widetilde{KO}(\Sigma)$ is generated 
by $i\circ f_1\circ c\colon \Sigma\to \Bcom O$, and $(i\circ f_1\circ c)^*(a_2)$ is the generator in $H^2(\Sigma;\F_2)$.
\end{theorem}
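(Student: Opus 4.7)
The plan is to use the AGLT splitting $\Bcom O \simeq BO \times \Ecom O$ together with (\ref{eq:ns}) to identify $\rKOcom(\Sigma) \cong \widetilde{KO}(\Sigma) \oplus [\Sigma, \Ecom O]$, with $\iota_*$ the projection onto the first factor. The theorem then reduces to showing $[\Sigma, \Ecom O] \cong \Z/2$ and identifying $i \circ f_1 \circ c$ as its generator; the assertion about $(i \circ f_1 \circ c)^*(a_2)$ will drop out of the detection argument.

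First I would compute the low homotopy groups of $\Ecom O$. The product decomposition $O(3) \cong \{\pm I\} \times SO(3)$ gives $\Bcom O(3) \cong B\{\pm I\} \times \Bcom SO(3)$; since $SO(3)$ is path-connected, $\Bcom SO(3)$ is simply connected, so $\pi_1(\Bcom O(3)) \cong \Z/2$. By Proposition~\ref{prop:pi2BcomOn}(2) this equals $\pi_1(\Bcom O(n))$ for every $n \geq 2$, hence $\pi_1(\Bcom O) \cong \Z/2$; comparing with $\pi_1(BO) = \Z/2$ via the splitting forces $\pi_1(\Ecom O) = 0$. Analogously, Proposition~\ref{prop:pi2BcomOn} gives $\pi_2(\Bcom O) \cong \Z/2 \oplus \Z/2$, and since $\pi_2(BO) = \Z/2$, we obtain $\pi_2(\Ecom O) \cong \Z/2$.

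With $\Ecom O$ simply connected and $\pi_2 = \Z/2$, its Postnikov $2$-section is $K(\Z/2, 2)$, so standard obstruction theory gives $[X, \Ecom O] \cong H^2(X; \F_2)$ for every $2$-dimensional CW complex $X$, realized by pullback of any generator of $H^2(\Ecom O; \F_2) \cong \F_2$. I would identify such a generator by restricting $a_2$ along the canonical inclusion $\Ecom O \hookrightarrow \Bcom O$ coming from the splitting; by Hurewicz this restriction is non-zero precisely when it evaluates non-trivially on a generator of $\pi_2(\Ecom O)$. Proposition~\ref{thm:scEcomO} (with $k = 1$) and Lemma~\ref{lem:to2b} show that $[i \circ f_1] \in \pi_2(\Bcom O)$ is non-trivial and lies in $\ker(\pi_2(\Bcom O) \to \pi_2(BO))$, so it represents the generator of $\pi_2(\Ecom O)$. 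The identity $(i \circ f_1)^*(a_2) = f_1^*(\bar r)$, combined with Lemma~\ref{lemm:clrEul}, then gives the required non-triviality.

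Taking $X = \Sigma$ yields $[\Sigma, \Ecom O] \cong H^2(\Sigma; \F_2) = \Z/2$. Since $c$ collapses the $1$-skeleton of $\Sigma$, it has degree $1$ in $\F_2$-cohomology, so $(i \circ f_1 \circ c)^*(a_2) = c^*(f_1^*(\bar r))$ generates $H^2(\Sigma; \F_2)$; under the detection isomorphism $[\Sigma, \Ecom O] \cong H^2(\Sigma; \F_2)$, this simultaneously identifies $i \circ f_1 \circ c$ as the generator of $\ker(\iota_*)$ and verifies the final cohomological assertion. The main obstacle I anticipate is pinning down $\pi_1(\Ecom O) = 0$: because $O(2)$ is disconnected, one cannot work at level $n = 2$ directly and must instead pass to $n = 3$ via $O(3) \cong \{\pm I\} \times SO(3)$, then apply Proposition~\ref{prop:pi2BcomOn}(2) to transport the $\pi_1$-information down to $n = 2$ and up to the stable range; everything else is either the AGLT splitting, obstruction theory on $2$-complexes, or cohomological computations already in hand.
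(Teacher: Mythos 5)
Your proposal is correct, and its skeleton---reduce via the splitting of \cite{Ad3} and (\ref{eq:ns}) to computing $[\Sigma,\Ecom O]$, then detect the generator with $a_2$ using Lemma~\ref{lemm:clrEul} and the fact that $c^*$ is an isomorphism on $H^2(-;\F_2)$---is the same as the paper's. Where you genuinely differ is the middle step. The paper never computes $\pi_2(\Ecom O)$ nor invokes Postnikov sections: it uses the cofibration $\bigvee_m S^1\to\Sigma\xrightarrow{c}S^2$, simple connectivity of $\Ecom O$ (citing \cite[Lemma 4.3]{Ad3} for $\pi_1(\Bcom O)\cong\pi_1(BO)$), and the already-established $S^2$ case (Proposition~\ref{prop:mainS2}) to conclude that $[S^2,\Ecom O]\to[\Sigma,\Ecom O]$ is onto and hence $|[\Sigma,\Ecom O]|\le 2$; non-vanishing of $(i\circ f_1\circ c)^*(a_2)$ then finishes. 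You instead compute $\pi_1(\Ecom O)=0$ and $\pi_2(\Ecom O)\cong\Z/2$ from Proposition~\ref{prop:pi2BcomOn} together with the splitting, identify the $2$-type of $\Ecom O$ with $K(\Z/2,2)$, and obtain the natural bijection $[X,\Ecom O]\cong H^2(X;\F_2)$ for every $2$-complex $X$, detected by the restriction of $a_2$, which you correctly show generates $H^2(\Ecom O;\F_2)$ by evaluating on the lift of $i\circ f_1$ (Proposition~\ref{thm:scEcomO}, Lemma~\ref{lem:to2b}, Lemma~\ref{lemm:clrEul}). Your route proves slightly more---it exhibits $a_2$ as a complete invariant for the nonstandard part over arbitrary $2$-complexes and makes the final cohomological assertion automatic---at the cost of the obstruction-theoretic setup and a little bookkeeping you should make explicit: passing non-triviality of $[j_n\circ f_1]$ from $\pi_2(\Bcom O(n))$ to $\pi_2(\Bcom O)$ uses the stability isomorphisms of Proposition~\ref{prop:pi2BcomOn}(2), and your derivation of $\pi_1(\Bcom O)\cong\Z/2$ via $O(3)\cong\{\pm I\}\times SO(3)$ plus stability is a valid substitute for the paper's citation of \cite{Ad3}. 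The paper's cofibration argument is the more economical of the two, since it simply recycles the $S^2$ computation rather than analyzing the low-dimensional homotopy type of $\Ecom O$.
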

\begin{proof}  By (\ref{eq:ns}) we have a natural isomorphism $\ker (\iota_*) \isom [\Sigma, \Ecom O]$.
To avoid confusion, for the remainder of the proof let us write $\iota_*^{\Sigma} \co \rKOcom(\Sigma)\to \widetilde{KO}(\Sigma)$ instead of $\iota_*$. Consider the cofibration sequence $\bigvee_{m}S^1\to \Sigma\xrightarrow{c}S^2$, which induces the sequence
\[[S^2,\Ecom O]\xrightarrow{- \circ c}[\Sigma,\Ecom O]\to \left[\bigvee_{m}S^1,\Ecom O\right].\]
Since the map $\pi_2(\Bcom O)\to\pi_2(BO)$ induced by $\iota$ is surjective and $\pi_1(\Bcom O)\cong \pi_1(BO)$ (see \cite[Lemma 4.3]{Ad3}), $\Ecom O$ is simply connected, and then the last term in the above sequence is trivial. Therefore $[S^2,\Ecom O]\xrightarrow{- \circ c}[\Sigma,\Ecom O]$ is surjective.

Taking $\Sigma = S^2$, the proof of Proposition~\ref{prop:mainS2} shows that
$$\iota^{S^2}_*\co \rKOcom(S^2) \to \rKO (S^2)$$
is a surjection $\Z/2\oplus \Z/2\twoheadrightarrow \Z/2$, so $\ker (\iota^{S^2}_*) \isom \Z/2$.
Moreover, as above we have $\ker (\iota^{S^2}_*) \isom [S^2, \Ecom O]$.
It follows that $[\Sigma,\Ecom O]$ has at most two elements, and to complete the proof, it suffices to show that $[i\circ f_1\circ c]$ is a non-trivial element of $\ker (\iota_*^{\Sigma}) \isom [\Sigma,\Ecom O]$.

 By Lemma~\ref{lem:to2b}, we have $[i\circ f_1\circ c]\in \ker (\iota_*^{\Sigma})$. To show that $[i\circ f_1\circ c]$ is non-trivial, we study the values of the composition 
\[\Sigma\xrightarrow{c}S^2\xrightarrow{f_1}\Bcom O(2)\xrightarrow{i}\Bcom O\] 
in $\F_2$-cohomology. Recall from Definition~\ref{def: a_2} that we have a class 
$$a_2\in H^2(\Bcom O;\F_2)$$
that pulls back to $\bar{r}$ (the reduction mod 2 of $r\in H^2(\Bcom O(2);\Z)$). 
By Lemma \ref{lemm:clrEul}, $f_1^*(\bar{r})\in H^2(S^2;\F_2)$ is the generator. Finally, since $c^*\colon H^2(S^2;\F_2)\to H^2(\Sigma;\F_2)$ is an isomorphism,  
$(i\circ f_1\circ c)^*(a_2)$ is the generator in $H^2(\Sigma;\F_2)$. 
\end{proof}

\begin{remark} Theorem~\ref{thm: main} can be proven without  (\ref{eq:ns}). The above argument shows  $|\ker (\iota_*)|\geq 2$  and  $|[\Sigma, \Ecom O]|\leq 2$, and the homotopy lifting property for the homotopy fibration $\Ecom O \to \Bcom O \srt{\iota} BO$ yields a surjection $[\Sigma, \Ecom O]\twoheadrightarrow \ker (\iota_*)$.
\end{remark}

\section{Real commutative $K$-theory of surfaces: Multiplicative structure }

In this section we compute the ring structure of $\rKOcom (\Sigma)$ for closed, connected surfaces $\Sigma$. This will be achieved using the characteristic class $a_2$ introduced in Definition~\ref{def: a_2}. In order to work with this class, we will need some results regarding the effect of 
$$\phi^{-1}\colon\Bcom O(2)\to \Bcom O(2)$$
 in cohomology with $\F_2$--coefficients.

\subsection{The inverse map in $\F_2$ cohomology}

In \cite[Theorem 5.1]{AnCGrVi}, a presentation
\[H^*(\Bcom O(2);\F_2)\cong \F_2[w_1,w_2,\bar{r},s]/(w_1\bar{r}, \bar{r}^2,\bar{r}s, s^2)\]
is given, where $w_1$ and $w_2$ are the pullbacks of the first and second Stiefel--Whitney classes  along the inclusion $\iota\co \Bcom O(2)\to BO(2)$;  the class $\bar{r}$ is the reduction of the class $r\in H^2(\Bcom O(2); \Z)$ described in (\ref{eq:r&e}); and  $s\in H^3(\Bcom O(2);\F_2)$.
 It is also shown there that the action of the Steenrod algebra is determined by its action on $H^*(BO(2);\F_2)$ and the total Steenrod squares $\Sq(\bar r) = \bar r$ and 
\begin{align}
\Sq(s) = s + w_2\bar{r} + w_1^2s\label{eq:ssq}
\end{align}

\begin{proposition}\label{prop:invm2}
$(\phi^{-1})^*\co H^*(\Bcom O(2);\F_2)\to H^*(\Bcom O(2);\F_2)$ is given on generators by $w_1\mapsto w_1$, $w_2\mapsto w_2+\bar{r}$, $\bar{r}\mapsto\bar{r}$ and $s\mapsto s$.
\end{proposition}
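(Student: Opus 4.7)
The plan rests on three structural observations:
\begin{enumerate}
\item[(i)] $\phi^{-1}$ is an involution, so $((\phi^{-1})^*)^2 = \mathrm{id}$;
\item[(ii)] the inclusion $k\co BO(1)^2 \to \Bcom O(2)$ satisfies $\phi^{-1}\circ k = k$ (every element of $O(1)^2$ equals its own inverse), hence $k^* \circ (\phi^{-1})^* = k^*$;
\item[(iii)] $(\phi^{-1})^*$ commutes with Steenrod operations.
\end{enumerate}
I will handle the generators degree by degree, expanding $(\phi^{-1})^*$ of each one in the monomial basis of the appropriate cohomology group read off from the presentation $\F_2[w_1,w_2,\bar r,s]/(w_1\bar r,\bar r^2,\bar r s,s^2)$, and then pinning down coefficients using (i)--(iii) together with the maps $f_k$ and $j$ already at hand.

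For $w_1$: $H^1(\Bcom O(2);\F_2) = \F_2\cdot w_1$, so $(\phi^{-1})^*(w_1)\in\{0,w_1\}$, and (i) rules out $0$. For $\bar r$: expand $(\phi^{-1})^*(\bar r) = a\,w_1^2 + b\,w_2 + c\,\bar r$ in the $H^2$-basis $\{w_1^2,w_2,\bar r\}$. Applying (ii) with $k^*(w_1)=u+v$, $k^*(w_2)=uv$, $k^*(\bar r)=0$ (all from \cite{AnCGrVi}) forces $a=b=0$, and (i) then gives $c^2 = 1$ in $\F_2$, so $c=1$. For $w_2$: the same restriction argument yields $(\phi^{-1})^*(w_2) = w_2 + c\,\bar r$ with $c$ initially free. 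To fix $c$, pull back along $f_1\co S^2\to\Bcom O(2)$: Lemma~\ref{lem:to2b} gives $f_1^*(w_2) = (\iota\circ f_1)^*(w_2) = 0$, while Proposition~\ref{prop:cco2b} with $n=-1$, $k=1$ identifies $\iota\circ\phi^{-1}\circ f_1$ as a classifying map for $E_{-1}$, so $f_1^*((\phi^{-1})^*(w_2)) = w_2(E_{-1}) = 1$; together with $f_1^*(\bar r) = 1$ from Lemma~\ref{lemm:clrEul} this forces $c=1$.

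For $s$: expand $(\phi^{-1})^*(s) = a\,w_1^3 + b\,w_1 w_2 + c\,s$ in the $H^3$-basis. Applying $\Sq^1$ and using $\Sq^1(s) = w_2\bar r$ from (\ref{eq:ssq}) together with the established formulas on $w_2$ and $\bar r$, the left side evaluates to $w_2\bar r$ (using $\bar r^2 = 0$), while the right side, with $\Sq^1(w_1) = w_1^2$ and $\Sq^1(w_2) = w_1 w_2$, reduces to $a\,w_1^4 + c\,w_2\bar r$. Comparing coefficients in the $H^4$-basis $\{w_1^4, w_1^2 w_2, w_2^2, w_2\bar r, w_1 s\}$ gives $a=0$ and $c=1$. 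Next apply $\Sq^2$, using $\Sq^2(s) = w_1^2 s$ and the Cartan formula to compute $\Sq^2(w_1 w_2) = w_1 w_2^2 + w_1^3 w_2$; comparison in $H^5$ yields the identity $b\,w_1 w_2^2 = 0$, and since $w_1 w_2^2$ is a non-zero basis element of $H^5(\Bcom O(2);\F_2)$ (visibly outside the ideal of relations), $b=0$.

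The delicate step is the last one: the non-trivial behaviour of $(\phi^{-1})^*$ on $w_2$ must be threaded through the Steenrod comparisons at degrees $4$ and $5$, and one must verify non-vanishing of specific monomials in the quotient ring. Both points are handled by direct bookkeeping against the given presentation.
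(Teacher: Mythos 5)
Your proof is correct, and for two of the four generators it takes a genuinely different route from the paper's. On $w_1$ and $w_2$ you essentially reproduce the paper's argument: the involution property, restriction along $k\co BO(1)^2\to \Bcom O(2)$, and evaluation against a map $f_k$ via Proposition~\ref{prop:cco2b} and Lemma~\ref{lemm:clrEul} to detect the $\bar r$--term (the paper uses $f_{-1}$ with $(k,n)=(-1,-1)$, you use $f_1$ with $(k,n)=(1,-1)$; both produce a bundle with nonzero $w_2$ mod $2$, so either works). For $\bar r$ the paper argues integrally, asserting $(\phi^{-1})^*(r)=-r$ and reducing mod $2$, whereas you combine $k^*(\bar r)=0$ with the involution to force the coefficient of $\bar r$ to be $1$; this avoids the integral computation, which the paper only sketches, and is arguably more self-contained. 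For $s$ the paper applies $k^*$ once, using $k^*(s)=0$ from \cite{AnCGrVi}, to kill both non-$s$ coefficients, with the involution giving the coefficient of $s$; you instead use naturality of Steenrod squares together with (\ref{eq:ssq}), and your bookkeeping checks out: $\Sq^1$ compares $(\phi^{-1})^*(w_2\bar r)=(w_2+\bar r)\bar r=w_2\bar r$ with $a\,w_1^4+c\,w_2\bar r$ (the $b$--term drops since $\Sq^1(w_1w_2)=0$), giving $a=0$, $c=1$, and $\Sq^2$ then isolates $b\,w_1w_2^2=0$ with $w_1w_2^2$ a nonzero monomial in the quotient presentation, so $b=0$. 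Both routes lean on input from \cite{AnCGrVi} --- the paper on the value $k^*(s)=0$, yours on the $\Sq$--formula for $s$; the paper's restriction argument is shorter, while yours needs only the Steenrod behaviour of $s$ and not its restriction to $BO(1)^2$.
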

\begin{proof}
We study the case of each class one at a time. 

$\bullet$ $w_1\mapsto w_1$. Since $(\phi^{-1})^*$ is an involution, it follows that $(\phi^{-1})^*(w_1)$ is not zero and hence $(\phi^{-1})^*(w_1) = w_1$. 

$\bullet$ $\bar{r}\mapsto \bar{r}$. To see this, notice that $(\phi^{-1})^*(r)=-r$ in $H^2(\Bcom O(2);\Z)$ (which is shown similarly as in the proof of Lemma \ref{lemm:clrEul}). Therefore $(\phi^{-1})^*(\bar{r}) = \bar{r}$. 

$\bullet $ $w_2\mapsto w_2+\bar{r}$. We write $(\phi^{-1})^*(w_2)$ as a generic element  $(\phi^{-1})^*(w_2) = \epsilon_1w_1^2 + \epsilon_2w_2 + \epsilon_3\bar{r}$ with $\epsilon_i\in \{0,1\}$. To show that $\epsilon_3$ is non-zero, recall that Proposition \ref{prop:cco2b} for $k=-1$ and $n=-1$ says that $\iota\circ\phi^{-1}\circ f_{-1}$ has degree 1, and in particular, $f_{-1}^*(\phi^{-1})^*\iota^*(w_2)$ is the generator $\tau \in H^2(S^2;\F_2)$. Therefore
\[\tau=f_{-1}^*(\phi^{-1})^*\iota^*(w_2) = f_{-1}^*(\phi^{-1})^*(w_2)=f_{-1}^*(\epsilon_1w_1^2 + \epsilon_2w_2 + \epsilon_3\bar{r})\] Now then, Proposition \ref{prop:cco2b} for $n=1$ implies that $\iota\circ f_k$ is homotopy trivial, and for $i=1,2$, we see that $f_{-1}^*(w_i) = 0$. Therefore $(\phi^{-1})^*(w_2)$ must have the class $\bar{r}$ and thus $\epsilon_3=1$. For the value of $\epsilon_2$, once more, using that $\phi^{-1}$ is an involution we see the equation
\begin{align*}
w_2 &= (\phi^{-1})^* ((\phi^{-1})^* (w_2))\\
    &= (\phi^{-1})^*(\epsilon_1w_1^2 + \epsilon_2w_2 + \bar{r})\\
    &=\epsilon_1w_1^2 + (\epsilon_2\epsilon_1w_1^2 + \epsilon_2\epsilon_2w_2 + \epsilon_2\bar{r}) + \bar{r}
\end{align*}
which implies $\epsilon_2=1$. Lastly, to show that $\epsilon_1=0$, we use the inversion map restricted to the abelian subgroup $O(1)^2\subset O(2)$. Any element in $O(1)\times O(1)$ has order 2, so that $(-)^{-1}\co O(1)^2\to O(1)^2$ is the identity and hence $\phi^{-1}\co BO(1)^2\to BO(1)^2$ is the identity as well. Since $O(1)^2$ is abelian, the inclusion $i\co BO(1)^2\hookrightarrow BO(2)$ factors through $\Bcom O(2)$, and we see the commutative diagram
\[
\xymatrix{
BO(1)^2\ar[d]^k\ar@{=}[r]^{\phi^{-1}}& BO(1)^2\ar[d]^k\ar@/^/[rd]^i &\\
\Bcom O(2)\ar[r]^{\phi^{-1}} & \Bcom O(2)\ar[r]^{\iota} & BO(2).
}
\]
Fix a presentation $H^*(BO(1)^2;\F_2)=\F_2[u,v]$ such that $i^*$ gives the isomorphism with the invariants subring $\F_2[u,v]^{\Z/2} \cong H^*(BO(2);\F_2)$, where the invariants are taken under the action of permuting $u$ and $v$, that is, $i^*(w_1)=u+v$ and $i^*(w_2)=uv$.  Applying $H^*(-;\F_2)$ to the diagram, we see that $(\phi^{-1})^*i^*(w_2)=uv$ must equal $k^*(\phi^{-1})^*\iota^*(w_2) = k^*(\epsilon_1w_1^2 + w_2 + \bar{r})=\epsilon_1(u^2+v^2) + uv + k^*(\bar{r})$, where the last equality holds by commutativity of the triangle in the diagram. It is shown in \cite[~p 23]{AnCGrVi} that $k^*(r) = 0$, so we can conclude $\epsilon_1 = 0$.

$\bullet$ $s\mapsto s$. Following the same procedure as above, let us write $(\phi^{-1})^*(s)=\epsilon_1 w_1^3 + \epsilon_2 w_1w_2 + \epsilon_3 s$ with $\epsilon_i\in \{0,1\}$. Similarly, $\phi^{-1}$ being an involution gives $\epsilon_3=1$. To analyze $\epsilon_1$ and $\epsilon_2$, we apply $k^*$ to both sides of the equation, and we use the fact that $k^*(s) = 0$ (which is shown in  \cite[~p 25]{AnCGrVi}) and $k^* \circ (\phi^{-1})^* = k^*$ to obtain $0 = \varepsilon_1(u+v)^3 + \varepsilon_2(u+v)uv$ in $H^*(BO(1)^2; \F_2)$. It follows that $\varepsilon_1 = \varepsilon_2 = 0$.
\end{proof}

\begin{remark}\label{eq:stablerbar}
Recall that the inclusion $BO(1)^n\to BO(n)$ induces an injective map in cohomology with $\F_2$-coefficients (by the Splitting Principle), and since $O(1)^n$ is abelian, the inclusion factors through $\iota\colon \Bcom O(n)\to BO(n)$. This yields a commutative diagram
\[\xymatrix{H^*(BO(n);\F_2)\ar[r]^{\iota^*}\ar@/_/[rd] & H^*(\Bcom O(n);\F_2)\ar[d]\\
&H^*(BO(1)^n;\F_2)},\]
 and thus $\iota^*$ is injective. Then for any $1\leq i\leq n$ we can define Stiefel--Whitney classes in $H^*(\Bcom O(n);\F_2)$ as the pullback under $\iota$ of the ordinary Stiefel--Whitney classes $w_i\in H^i(BO(n);\F_2)$; that is
\[w_i:=\iota^*(w_i)\in H^i(\Bcom O(n);\F_2).\]
Definition~\ref{def: a_2} provides a class $a_2\in H^2(\Bcom O(n);\F_2)$ such that $j_n^*(a_2)=\bar r\in H^2(\Bcom O(2);\F_2)$. 
Consider $w_2\in H^2(\Bcom O(n);\F_2)$. Then \ref{prop:invm2} implies that \[(\phi^{-1})^*(w_2)=w_2+a_2,\]
and the analogous formula holds in $H^2(\Bcom O;\F_2)$ as well.
\end{remark}

\begin{remark}\label{rmk: a_2}(\emph{Obstruction to algebraic TC structures}) 
Given a TC structure $f\colon X\to \Bcom O(n)$, it is interesting to ask whether $f$ is algebraic. Recall that this amounts to asking whether $f$ factors (up to homotopy) through the inclusion $i_A \co BA\hookrightarrow \Bcom O(n)$ associated to an abelian subgroup $A\leq O(n)$.

We claim that the class $a_2$ is an obstruction to algebraicity. It can be shown that every abelian subgroup $A\subset O(n)$ is conjugate to a subgroup of the product $O(1)^{n-2k}\times SO(2)^k$ (thought of as subgroup of block--diagonal matrices in $O(n)$). Hence we may assume, without loss of generality, that $A=O(1)^{n-2k}\times SO(2)^k$. Consider a map $g\colon X\to BA$. Then 
$$(i_A \circ g)^*(a_2)=(i_A \circ g)^*(w_2)+(\phi^{-1}\circ i_A \circ g)^*(w_2).$$ 
Now $\phi^{-1}\circ i_A=i_A\circ\phi^{-1}$, and 
 $$(\phi^{-1})^{*}\colon H^2(BA;\F_2)\to H^2(BA;\F_2)$$ 
 is the identity, since $\phi^{-1}\co O(1)\to O(1)$ is the identity and    Lemma \ref{lem:powerophomi} implies that $(\phi^{-1})^{*}\colon H^2(BSO(2);\F_2)\to H^2(BSO(2);\F_2)$ is the identity.
It now follows that $(\phi^{-1}\circ i_A)^*=i_A^*$ and then $(i_A\circ g)^*(a_2)=0$. In other words, if $E$ is a bundle with an algebraic TC structure $f$, then $a_2(E^{f})=0$.
\end{remark}

\subsection{Ring structure of $\rKOcom(\Sigma)$} Block sum of matrices and Kronecker product define homomorphisms $\oplus\colon O(n)\times O(m)\to O(n+m)$ and $\otimes\colon O(n)\times O(m)\to O(mn)$, which induce maps $\Bcom(\oplus)$ and $\Bcom(\otimes)$ making $\left[X,\coprod_{n=0}^\infty \Bcom O(n)\right]$  a \emph{semi-ring}, and for $X$ a compact space, we may consider the Grothendieck ring 
\[\KOcom(X):=\text{Gr}\left[X,\coprod_{n=0}^\infty \Bcom O(n)\right].\]
For $n\geq 0$ the inclusions $\Bcom O(n)\to \{n\}\times\Bcom O$ induce an isomorphism of rings $\KOcom(X)\xmaps{\cong}[X,\Z\times\Bcom O]$ (see \cite[Theorem 5.5]{Ad3}). Recall that our definition of reduced commutative $K$-theory is $\rKOcom(X)=[X,\Bcom O]$, which now under this isomorphism, can be identified with the non-unital ring generated by the formal differences $[f]-[*_n]$, where $*_n,f\colon X\to \Bcom O(n)$ and $*_n$ is the null-map. For our convenience we will represent these classes as formal differences of vector bundles with a fixed TC structure. Let $(\varepsilon^n)^{\st}$ denote the trivial bundle $\varepsilon^n$ with TC structure given by $*_n$, and let $E^f$ denote a rank $n$ vector bundle $E$ with TC structure $f$ (the underlying bundle $E$ is classified by the map $\iota\circ f\colon X\to BO(n)$). Then the stable class $E^f$ (or $[f]$) in $\rKOcom(X)$ is represented by $E^f-(\varepsilon^n)^{\st}$.

Recall that by Theorem~\ref{thm: main},
there is a unique non-zero element in the kernel of the natural map 
$\rKOcom(\Sigma)\to \rKO(\Sigma)$.  We refer to this element as \e{the non-standard stable class}.
 As an application of Proposition \ref{prop:invm2}, we show that for a closed connected surface $\Sigma$ all products in $\rKOcom(\Sigma)$ with the non-standard stable class are trivial.  
Our calculations will use the next two formulas, which may be verified using  the splitting principle. Let  $E$ and $F$ be rank $2$ real vector bundles, and let  $L$ be a real line bundle, all over the same base space. Then
\begin{align}
w_2(E\otimes F)&=w_1(E)^2+w_1(E)w_1(F)+w_1(F)^2\label{eq:w2}\\ 
w_2(E\otimes L)&=w_1(E)^2+w_1(L)^2+w_2(E).\label{eq:w2L}
\end{align}

\begin{theorem}\label{thm:main2}
For every closed connected surface $\Sigma$, there is an isomorphism of non-unital rings
\[\rKOcom(\Sigma)\cong \rKO(\Sigma)\cross \langle y\rangle\] 
where $2y=y^2=0$.
\end{theorem}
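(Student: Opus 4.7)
The plan is to leverage the additive splitting $\rKOcom(\Sigma)\isom\rKO(\Sigma)\oplus\langle y\rangle$ of Theorem~\ref{thm: main} and upgrade it to a non-unital ring isomorphism by proving (i) $y\cdot x=0$ for every $x\in\rKOcom(\Sigma)$, which in particular gives $y^2=0$, and (ii) that $\iota_*$ admits a multiplicative section. Note that $\iota_*$ is automatically a ring map, since $\iota$ respects the tensor-product $H$-space structures on $\Bcom O$ and $BO$, so $\langle y\rangle=\ker(\iota_*)$ is already a two-sided ideal, and $2y=0$ is part of Theorem~\ref{thm: main}.

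The heart of the proof is (i). Write $y=[\varepsilon^{2,f}]-[\varepsilon^{2,\st}]$ with $f=i\circ f_1\circ c$ and represent an arbitrary class as $x=[E^h]-n[\varepsilon^{1,\st}]$ for a rank-$n$ TC bundle $E^h$. Bilinearity of $\otimes$ gives
\[y\cdot x \;=\; \bigl([\varepsilon^{2,f}\otimes E^h]-[\varepsilon^{2,\st}\otimes E^h]\bigr)-n\,y,\]
and the bracketed difference $D$ lies in $\ker(\iota_*)=\langle y\rangle$, since both summands share the underlying bundle $2E$. To identify $D$, apply $a_2=w_2+(\phi^{-1})^*w_2$ from Remark~\ref{eq:stablerbar}. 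Using the Whitney sum formula for $w_2$ together with $(\phi^{-1})^*w_1=w_1$ (Proposition~\ref{prop:invm2}), a brief $\F_2$-computation shows that the cross terms $w_1\otimes w_1$ appearing in $\mu^*w_2$ and in $\mu^*(\phi^{-1})^*w_2$ cancel, so $a_2$ is primitive on $\Bcom O$ and hence induces an additive map $a_2\co\rKOcom(\Sigma)\to H^2(\Sigma;\F_2)$. Because $\phi^{-1}$ fixes the standard TC structure, $a_2([\varepsilon^{2,\st}\otimes E^h])=0$; Proposition~\ref{prop:cco2b} (with $k=1$, $n=-1$) shows the bundle on $\Sigma$ underlying $\phi^{-1}\circ f$ has $w_2$ equal to the generator of $H^2(\Sigma;\F_2)$, and combining this with the formulas (\ref{eq:w2})--(\ref{eq:w2L}), extended via the splitting principle to tensoring a rank-$2$ factor against a rank-$n$ factor, yields $a_2([\varepsilon^{2,f}\otimes E^h])=n\cdot(\text{generator})$. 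Therefore $a_2(D)=n\cdot a_2(y)=a_2(ny)$, and since $a_2$ injects $\langle y\rangle$ into $H^2(\Sigma;\F_2)$, we conclude $D=ny$, so $y\cdot x=0$.

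For (ii), use the presentation of $\rKO(\Sigma)$ from the Appendix to realize each generator by a bundle admitting an algebraic TC structure (line bundles and oriented $2$-plane bundles suffice). By Remark~\ref{rmk: a_2}, $a_2$ vanishes on such lifts. Assign each generator its algebraic lift and extend linearly to a candidate map $s\co\rKO(\Sigma)\to\rKOcom(\Sigma)$; well-definedness and multiplicativity both follow from the same template — any additive-relation or multiplicative-relation defect takes values in $\ker(\iota_*)=\langle y\rangle$, additivity of $a_2$ shows the defect has trivial $a_2$, and injectivity of $a_2$ on $\langle y\rangle$ forces it to vanish. Multiplicativity uses that Kronecker products of abelian subgroups of $O(N)$ remain abelian, so $s(x)s(x')$ is again algebraic and also has $a_2=0$. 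Combined with $y\cdot\rKOcom(\Sigma)=0$ from (i), the resulting decomposition $\rKOcom(\Sigma)=s(\rKO(\Sigma))\oplus\langle y\rangle$ is a non-unital ring isomorphism $\rKOcom(\Sigma)\isom\rKO(\Sigma)\times\langle y\rangle$ with $2y=y^2=0$.

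The main obstacle lies in the tensor-product computation of (i): one must first verify primitivity of $a_2$, then extend the rank-$2$ identities (\ref{eq:w2})--(\ref{eq:w2L}) to the mixed-rank case, and keep track of the factor of $n$ carefully so that the $a_2$-contribution cancels the rank-correction $ny$ on the nose rather than only up to the ambiguity inherent in $\langle y\rangle$.
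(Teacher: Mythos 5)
Your proposal is correct, and its overall architecture coincides with the paper's: both build a multiplicative section of $\iota_*$ by lifting the ring generators of $\rKO(\Sigma)$ to algebraic TC structures and using the vanishing of $a_2$ on algebraic classes (Remark~\ref{rmk: a_2}) together with its non-vanishing on $y$ (Theorem~\ref{thm: main}) to kill any defects, and both kill products with $y$ by evaluating $a_2(E^f)=w_2(E)+w_2(\phi^{-1}(E^f))$ with the help of Proposition~\ref{prop:cco2b}, which identifies the bundle underlying $\phi^{-1}(c^*E_0^{f_1})$ as $c^*E_{-1}$. Where you diverge is in the organization of the product computation: the paper reduces to the two identities $(c^*(E_0^{f_1})-(\varepsilon^2)^{\st})^2=0$ (obtained for free from the suspension $S^2$, since $c^*$ is a ring map) and $(c^*(E_0^{f_1})-(\varepsilon^2)^{\st})(L^{\st}-(\varepsilon^1)^{\st})=0$ for line bundles $L$, which suffice because the Appendix shows $\rKO(\Sigma)$ is generated as a ring by line-bundle classes; you instead compute $y\cdot x$ uniformly for an arbitrary class $x=[E^h]-n[\varepsilon^{1,\st}]$, which requires two extra verifications that the paper's route avoids, namely primitivity of $a_2$ under block sum and a mixed-rank extension of (\ref{eq:w2})--(\ref{eq:w2L}). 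Both check out: the $w_1\otimes w_1$ cross-terms in $\mu^*w_2$ and $\mu^*(\phi^{-1})^*w_2$ cancel mod $2$ since $(\phi^{-1})^*w_1=w_1$, and the splitting principle gives, for $A$ of rank $2$ and $B$ of rank $n$, $w_2(A\otimes B)=\binom{n}{2}w_1(A)^2+w_1(A)w_1(B)+w_1(B)^2+nw_2(A)$ mod $2$, which indeed yields $a_2(\varepsilon^{2,f}\otimes E^h)=n\cdot(\text{generator})$ and hence $D=ny$ and $y\cdot x=0$. Your version buys a single computation covering all $x$ at once (and makes explicit the additivity of $a_2$ on stable classes, which the paper uses only implicitly when it says the non-standard class is ``detected by $a_2$''), at the cost of the primitivity check and the more elaborate Stiefel--Whitney formula; the paper's version stays with rank-$2$ and rank-$1$ bundles and the suspension trick, but leans on the Appendix generation result already for the product step, not just for the section.
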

\begin{proof}  
In the Appendix, we show that for non-simply connected surfaces, $\rKO(\Sigma)$ is generated (as a ring) by classes $[L]-[\epsilon^1]$, where $L$ is a line bundle. 
The classifying map $\Sigma\to BO(1)$ of a line bundle $L$ is itself an algebraic TC structure on $L$ (since $O(1)$ is abelian), and we refer to this as the \e{standard} TC structure. Whitney sum and tensor product of line bundles have preferred algebraic TC structures as well, since in this case block sum of matrices and Kronecker product are homomorphisms of abelian groups; we again refer to these preferred structures as \e{standard}. 
Associating to each generator $l_1, \ldots, l_k$ of the presentation of $\rKO(\Sigma)$ given in Appendix B its standard (algebraic) TC structure yields a commutative diagram of ring homomorphisms

\[\xymatrix{\Z[l_1, \ldots, l_k]\ar@/_/[rd]^\pi\ar[r]^s &\KOcom(\Sigma)\ar[d]^{\iota_*}\\
&KO(\Sigma)}\]

By commutativity, the kernel of $\pi$ maps to the kernel of $\iota_*$ (note here that this kernel is the same whether we work with reduced or unreduced $K$--theory, and hence we use the notation $\iota_*$ in both settings). Each class in the image of $s$ is algebraic, and by Remark~\ref{rmk: a_2} we know that $a_2$ vanishes on all such classes. By Theorem~\ref{thm: main} there is only one non-zero class in $\ker(\iota_*)$, and $a_2$ is non-zero on this class. It follows that $s$ must 
be zero on the kernel of $\pi$, and hence $s$ factors through $\Z[l_1, \ldots, l_k]/\ker (\pi) \isom KO(\Sigma)$. We thus have induced maps 
$$\sigma \co KO(\Sigma) \maps \KOcom(\Sigma)$$
and
$$\widetilde{\sigma} \co \rKO(\Sigma) \maps  \rKOcom(\Sigma)$$
such that the composition
\begin{align*}
KO(\Sigma)\xrightarrow{\sigma} \KOcom(\Sigma)\xrightarrow{\iota_*} KO(\Sigma)
\end{align*}
is the identity, and similarly for $\widetilde{\sigma}$. This implies that we have a splitting of abelian groups
$$\rKOcom (\Sigma) \isom \rKO (\Sigma) \oplus \ker (\iota_*),$$
and also shows that summand of $\rKOcom (\Sigma)$ corresponding to $\rKO (\Sigma)$  (that is, the image of $\widetilde{\sigma}$) is a subring of $\rKOcom (\Sigma)$. Since $ \ker (\iota_*)$ is isomorphic to $\Z/2$ as an abelian group,  to complete the proof it suffices to show that for every element $x\in \rKOcom (\Sigma)$, the product of $x$ with the generator $y\in \ker (\iota_*)$ is zero.

The case $\Sigma=S^2$ follows from the general fact that if $\widetilde{E}^*$ is a reduced multiplicative cohomology theory and $Y$ is a suspension, then all products in the ring $\widetilde{E}^*(Y)$ are zero.
For the non-simply connected case, by Theorem \ref{thm: main} it is enough to show that 
\begin{equation}\label{eqn:2} 
(c^*(E_0^{f_1})-(\varepsilon^2)^{\st})^2=0
\end{equation}
and that for an arbitrary line bundle $L$, 
\begin{equation}\label{eqn:prod}
(c^*(E_0^{f_1})-(\varepsilon^2)^{\st})(L^{\st}-(\varepsilon^1)^{\st})=0.
\end{equation}
Equation (\ref{eqn:2}) follows at once, since the collapse map $\Sigma\xmaps{c} S^2$ induces a ring homomorphism $c^*\colon \rKOcom(S^2)\to\rKOcom(\Sigma)$. To prove (\ref{eqn:prod}), it suffices to show that 
$(c^*E_0^{f_1}\otimes L^{\st})$ and $(2L^{\st}\oplus c^*E_0^{f_1})$ are stably equivalent. Both underlying bundles are stably equivalent to $2L$, and since there is a single non-standard class in $\rKOcom(\Sigma)$ that is detected by $a_2$, we only have to calculate the values of $a_2$ on these bundles. To compute the values of $a_2$, recall from Remark \ref{eq:stablerbar}  that $$a_2(E^{f})=w_2(E)+w_2(\phi^{-1}(E^{f})),$$ where the TC structure on $\phi^{-1}(E^{f})$ is $\phi^{-1}\circ f$ and the underlying bundle is classified by $\iota\circ \phi^{-1}\circ f$. By Proposition \ref{prop:cco2b} the map $\iota\circ\phi^{-1}\circ f_1$ is homotopic to the map that classifies a bundle with choice of orientation $-1$. Hence the underlying bundle of $\phi^{-1}(E_0^{f_1})$ is $E_{-1}$. 
We now compute $a_2(c^*E_0^{f_1}\otimes L^{\st})$
using formula (\ref{eq:w2L}), along with the fact that $\phi^{-1}(E^f\otimes F^g)=\phi^{-1}(E^f)\otimes\phi^{-1}(F^g)$ for any pair of vector bundles $E$ and $F$ with respective TC structures $[f]$ and $[g]$, and the fact that $\phi^{-1}(L^{\st})=L^{\st}$: We have
\begin{align*}
a_2(c^*E_0^{f_1}\otimes L^{\st})
&=w_2(2L)+w_2(c^*E_{-1}\otimes L)\\\
&=w_2(2L)+c^*(w_2(E_{-1})) + w_1(L)^2\\
&=c^*(w_2(E_{-1})).
\end{align*}
One can readily verify that $a_2(2L^{\st}\oplus c^*E_0^{f_1})=c^*(w_2(E_{-1}))$ as well. 
\end{proof}

\appendix

\section{Real topological $K$-theory of surfaces}

We now describe the relationship between the $\F_2$-cohomology ring of a closed connected surface and its  real topological $K$-theory, yielding the   presentations
\[KO(S^2)\isom \Z[e_1]/(2e_1,e_1^2); 
\]
\[KO(\Sigma_g)\isom\Z[l_{a_i},l_{b_j}:  1\leq i, j\leq g]/(2l_{a_i},2l_{b_j},l_{a_i}l_{a_j},l_{b_i}l_{b_j},l
_{a_i}l_{b_i}+l_{a_j}l_{b_j},l_{a_i}l_{b_k}:i\ne k), 
\]
and
\[KO(P_n)\isom\Z[l_{a_i}:1\leq i \leq n]/(4l_{a_i},l_{a_i}^2-2l_{a_j},l_{a_i}l_{a_k}: i\ne k),
\]
where $\Sigma_g$ is the connected, orientable surface of genus $g>0$ and $P_n$ is the connected sum of $n$ copies of $\RP^2$.

 \subsection{Ring presentations}\label{sec:pres} Let $R$ be a unital, commutative ring of characteristic zero that is additively generated by elements $r_1, \ldots, r_n\in R$. Then there is 
a surjective ring homomorphism
 $f\co \Z[x_1, \ldots, x_n]\to R$ sending $x_i$ to $r_i$.  
We have
$r_i r_j =  \sum_k a_k^{ij} r_k$
for some $a_k^{ij} \in \Z$ ($i,j\in \{1, \ldots, n\}$), and hence 
\begin{align}x_ix_j - \sum_k a_k^{ij}  x_k\in \ker (f).
\label{rel1}
\end{align}
By eliminating generators if necessary, we may assume that the only relations
$\sum_k a_k r_k = 0$ ($a_1, \ldots a_n\in \Z$) are of the form $a_i r_i = 0$.
A simple induction on degree shows that $\ker (f)$ is  generated by the elements (\ref{rel1}) together with one
element $a_i x_i$ for each $i$ (where $a_i$ may be zero),  yielding a finite presentation of $R$.
If $R$ has characteristic $p>0$, there is a similar presentation with $\F_p$ in place of $\Z$.

More generally, say $r_1, \ldots, r_n$ generate $R$ as a ring (but not necessarily as an abelian group), and define $f\co \Z[x_1, \ldots, x_n]\to R$ as above.
If 
$$r_1, \ldots, r_n, p_1 (r_1, \ldots, r_n),\ldots, p_k (r_1, \ldots, r_n)$$
 form an additive generating set for $R$ (where the $p_i$ are integer polynomials), then we obtain another surjection
 $g\co \Z[x_1, \ldots, x_n, p_1, \ldots, p_k]\to R$ (sending $x_i$ to $r_i$ and $p_j$ to $p_j (r_1, \ldots, r_n)$),
and $\ker (f)$ is
the image of $\ker (g)$
under the map 
$$\Z[x_1, \ldots, x_n, p_1, \ldots, p_k] \to \Z[x_1, \ldots, x_n]$$ 
fixing $x_i$ and sending $p_i$ to $p_i (x_1, \ldots, x_n)$.  
By eliminating redundant generators, we may assume that there are no linear relations involving more than one of the generators $r_i$, $p_j (r_1, \ldots, r_n)$.
The above procedure then gives a finite generating set for $\ker(g)$, and hence for $\ker (f)$.
 The presentations below are obtained this way.

\subsection{The total Stiefel--Whitney class and $\rKO(\Sigma)$}
Let $\Sigma$ be a closed connected surface.
Classes in the ungraded cohomology ring $H^*(\Sigma;\F_2)$ can be uniquely written  as $x_0+x_1+x_2$, where $x_i\in H^i(\Sigma;\F_2)$ for $i=0,1,2$. The multiplicative units  $H^*(\Sigma;\F_2)^\times$ form an abelian group under the cup product, and each unit has the form $1+x_1+x_2$.
The total Stiefel--Whitney class $W$ of real vector bundles over $\Sigma$ takes values in $H^*(\Sigma;\F_2)^\times$, and extends to a well--defined map 
$$W\colon KO(\Sigma)\to H^*(\Sigma;\F_2)^\times$$ 
given by $W(E-F)=W(E)W(F)^{-1}$. Moreover, since $H^*(\Sigma;\F_2)$ is a commutative ring, $W$ is a homomorphism. Restricting $W$ to $\rKO(\Sigma)$ and writing classes in the form $E-\varepsilon^n\in \rKO(\Sigma)$ where  $\varepsilon^n$ is the trivial bundle of rank $n=\text{rank}(E)$, we see that $W(E-\varepsilon^n)=W(E)$.

\begin{lemma}\label{lem:W}  
The map $W$ is an isomorphism of abelian groups
\[W\co \rKO(\Sigma)\xrightarrow{\cong} H^*(\Sigma;\F_2)^\times.\]
\end{lemma}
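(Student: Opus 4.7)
The plan is to verify that $W$ is a well-defined group homomorphism, prove surjectivity by exhibiting preimages for a generating set of $H^*(\Sigma;\F_2)^\times$, and then conclude injectivity by an order comparison powered by the Postnikov tower of $BO$.

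For the first step, since $\dim \Sigma = 2$ forces $H^{\geq 3}(\Sigma;\F_2)=0$, every real bundle $E\to\Sigma$ satisfies $W(E)=1+w_1(E)+w_2(E)\in H^*(\Sigma;\F_2)^\times$. The Whitney sum formula $W(E\oplus F)=W(E)W(F)$ combined with $W(\varepsilon^n)=1$ yields a well-defined group homomorphism $W\co \rKO(\Sigma)\to H^*(\Sigma;\F_2)^\times$ via $W([E]-[\varepsilon^n])=W(E)$ (noting that $H^*(\Sigma;\F_2)^\times$ is abelian).

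For surjectivity, every unit $1+x_1+x_2$ in $H^*(\Sigma;\F_2)$ is a product of elements $1+\alpha$ (with $\alpha\in H^1$) and $1+\mu$ (where $\mu$ generates $H^2$), using that $H^1\cdot H^2\subseteq H^3=0$. Each $\alpha\in H^1(\Sigma;\F_2)$ is realized as $w_1(L)$ for some line bundle $L$, so $W([L]-[\varepsilon^1])=1+\alpha$. For $1+\mu$, I pull back via the collapse map $c\co \Sigma\to S^2$ (the identity when $\Sigma=S^2$; otherwise the collapse of the 1-skeleton in the standard one-2-cell CW structure), which induces an isomorphism on $H^2(-;\F_2)$. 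The oriented rank-2 bundle $E_1\to S^2$ of Proposition~\ref{prop:mainS2} has $w_1(E_1)=0$ and $w_2(E_1)$ generating $H^2(S^2;\F_2)$, so $c^*[E_1]-[\varepsilon^2]\in\rKO(\Sigma)$ has $W=1+c^*w_2(E_1)=1+\mu$.

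For injectivity, I count. On a 2-dimensional CW complex $X$, the Postnikov tower of $BO$ above $P_2 BO$ contributes nothing to $[X,BO]$ (the fiber $BSpin$ is 3-connected), and the $k$-invariant for the fibration $K(\F_2,2)\to P_2 BO\to K(\F_2,1)$ pulls back to $0\in H^3(X;\F_2)$ for dimensional reasons. Hence the map $(w_1,w_2)\co [X,BO]\to H^1(X;\F_2)\times H^2(X;\F_2)$ is a bijection of sets, giving $|\rKO(\Sigma)|=|H^1(\Sigma;\F_2)|\cdot|H^2(\Sigma;\F_2)|=|H^*(\Sigma;\F_2)^\times|$ (the latter because units are uniquely of the form $1+x_1+x_2$). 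A surjective homomorphism of finite abelian groups of equal order is an isomorphism, completing the proof. The key subtlety is producing a preimage for $1+\mu$: line-bundle products give mixed expressions $(1+\alpha)(1+\beta)=1+\alpha+\beta+\alpha\beta$, and $TS^2$ is stably trivial since $TS^2\oplus\varepsilon^1\cong\varepsilon^3$, so neither approach isolates $\mu$ directly; the collapse-map construction with $E_1$ handles all closed connected surfaces uniformly.
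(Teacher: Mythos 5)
Your proof is correct, and the surjectivity half is essentially the paper's: realize $1+x_1$ by a line bundle and $1+\mu$ by $c^*E_1$, then use multiplicativity of $W$. Where you genuinely diverge is the counting step. The paper bounds $|\rKO(\Sigma)|$ by $2^{n+1}$ using exactness of $\rKO(S^2)\to\rKO(\Sigma)\to\rKO(\bigvee_n S^1)$ for the cofibration $\bigvee_n S^1\to\Sigma\xrightarrow{c}S^2$, together with the known values $\rKO(S^2)=\Z/2$ and $\rKO(S^1)=\Z/2$; you instead truncate $BO$ at its second Postnikov stage and identify $[\Sigma,BO]$ with $H^1(\Sigma;\F_2)\times H^2(\Sigma;\F_2)$ via $(w_1,w_2)$. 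Your route buys a stronger intermediate statement (the classical classification of stable bundles over a $2$--complex by $w_1,w_2$, independent of the surjectivity of $W$), at the cost of invoking obstruction theory; the paper's route is more elementary given the $KO$ groups of spheres. One small gloss: vanishing of the pulled-back $k$-invariant gives that every $w_1$ is realized and that $H^2(\Sigma;\F_2)$ acts transitively on each fiber of $[\Sigma,P_2BO]\to[\Sigma,K(\F_2,1)]$; to get the bijection you assert one should also note that the action is detected by $w_2$ (the restriction of $w_2$ to the fiber $K(\F_2,2)$ is the fundamental class), or else settle for the inequality $|\rKO(\Sigma)|\leq |H^1(\Sigma;\F_2)|\cdot|H^2(\Sigma;\F_2)|$, which is all your order-comparison argument actually needs and which does follow from what you wrote.
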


\begin{proof} 
Consider the cofiber sequence $\bigvee_{n}S^1\to \Sigma\xrightarrow{c} S^2$. The induced exact sequence in $K$--theory has the form
\[\Z/2=\rKO(S^2)\to \rKO(\Sigma)\to \rKO\left(\bigvee_{n}S^1\right)=(\Z/2)^{n},\]
which implies that $\rKO(\Sigma)$ has at most $2^{n+1}$ elements. Since 
$H^*(\Sigma;\F_2)^\times$ has exactly $2^{n+1}$ elements, 
it suffices to prove $W$ is surjective.
For every $x_1\in H^1(\Sigma;\F_2)$, there is a line bundle $L$ such that   
$W(L)=1+x_1$.  The  bundle $c^*E_1$ classified by the map $\Sigma\xrightarrow{c}S^2\xrightarrow{g_1}BSO(2)$ satisfies $w_1(c^*E_1)=0$, and $w_2(c^*E_1)\in H^2(\Sigma;\F_2)\isom \F_2$ is the generator. 
One now finds that $W(c^*E_1\oplus L)=1+x_1+w_2(c^*E_1)$. 
\end{proof}

\subsection{The abelian group structure of $H^*(\Sigma;\F_2)^\times$}
There is a general procedure for computing the $\F_2$-cohomology ring of a connected sum of surfaces $M_1\# M_2$ by analyzing the surjection in cohomology $H^*(M_1\vee M_2;\F_2)\to H^*(M_1\# M_2;\F_2)$ via the associated Mayer--Vietoris sequence.
One obtains the presentation
\[H^*(\Sigma_g;\F_2)\isom \F_2[a_1,...,a_{g},b_1,...,b_{g}]/(a_ia_j,b_ib_j,a_ib_i+a_jb_j,a_ib_k : i\ne k ),\]
where $\deg(a_i)=\deg(b_i)=1$.
A computation now shows that every element in $H^*(\Sigma_g;\F_2)^\times$ has order 2, which yields the following result.

\begin{lemma}\label{lem:cohg}
For every $g\geq 1$, there is an isomorphism of abelian groups \[H^*(\Sigma_g;\F_2)^\times\cong(\Z/2)^{2g+1}.\] Moreover, letting $y_2\in H^2 (P_n; \F_2)$ denote the generator, we have the following generating set for $H^*(\Sigma_g;\F_2)^\times$$:$
\[\{1+a_i, 1+b_j, 1+y_2 : 1\leq i,j\leq g\}.\]
\end{lemma}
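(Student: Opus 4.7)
My plan is to exploit the explicit presentation of $H^*(\Sigma_g; \F_2)$ just displayed, together with the fact that we are working in characteristic $2$. Every unit in $H^*(\Sigma_g; \F_2)$ has the form $u = 1 + x_1 + x_2$ with $x_i \in H^i(\Sigma_g; \F_2)$, since any class with nonzero constant term is invertible (its inverse can be constructed degree by degree, and the construction terminates because the ring is concentrated in degrees $0,1,2$). Conversely, no element with zero constant term is a unit. Thus the underlying set of $H^*(\Sigma_g; \F_2)^\times$ has cardinality $|H^1| \cdot |H^2| = 2^{2g}\cdot 2 = 2^{2g+1}$.

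Next I would show every unit has order dividing $2$. In characteristic $2$, the Frobenius gives $(1+x_1+x_2)^2 = 1 + x_1^2 + x_2^2$, and $x_2^2 \in H^4(\Sigma_g;\F_2) = 0$ for dimension reasons. For $x_1 = \sum_i \alpha_i a_i + \sum_j \beta_j b_j$ with $\alpha_i,\beta_j \in \F_2$, the Frobenius (freshman's dream) again gives $x_1^2 = \sum_i \alpha_i a_i^2 + \sum_j \beta_j b_j^2 = 0$ by the relations $a_i^2 = b_j^2 = 0$ in the presentation. Hence $u^2 = 1$, and $H^*(\Sigma_g; \F_2)^\times$ is an abelian group of exponent $2$ and order $2^{2g+1}$, forcing $H^*(\Sigma_g; \F_2)^\times \cong (\Z/2)^{2g+1}$.

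Finally, to verify the claimed generating set, I would check directly that products of the listed elements realize every element of the group. Using the relations,
\[(1 + a_i)(1 + a_j) = 1 + a_i + a_j, \qquad (1+b_i)(1+b_j) = 1 + b_i + b_j\]
for all $i,j$ (since $a_i a_j = b_i b_j = 0$), and
\[(1+a_i)(1+b_j) = 1 + a_i + b_j + a_i b_j,\]
where $a_i b_j = 0$ if $i \neq j$ while $a_i b_i = y_2$ is the generator of $H^2$. Combining, a product $\prod_{i\in I}(1+a_i)\prod_{j\in J}(1+b_j)$ equals $1 + \sum_{i \in I} a_i + \sum_{j \in J} b_j + |I\cap J|\, y_2$. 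Multiplying by $(1+y_2)$ if necessary toggles the $y_2$-coefficient (because $x_1 y_2 = 0$ for $x_1 \in H^1$). This realizes every element $1 + x_1 + \varepsilon y_2$, so the proposed $2g+1$ elements generate. Since they generate a group of the correct order $2^{2g+1}$ and each has order $2$, they form a minimal generating set, completing the proof.

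I do not anticipate a genuine obstacle: the argument is essentially a bookkeeping exercise once one observes the freshman's dream computation that kills all squares in positive degree.
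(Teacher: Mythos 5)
Your proposal is correct and follows essentially the same route as the paper: the paper's proof consists of noting (from the displayed presentation) that a computation shows every unit has order $2$, which together with the count $|H^*(\Sigma_g;\F_2)^\times|=2^{2g+1}$ forces the group to be elementary abelian, and your Frobenius argument and explicit product computations simply carry out that computation and the verification of the generating set in detail.
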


Let $P_n$ denote the connected sum of $n$ copies of  $\RP^2$; note that $P_n$ is non-orientable, and in fact each closed, connected, non-orientable surface is homeomorphic to $P_n$ for some $n\geq 1$.  The graded $\F_2$--cohomology ring of $P_n$ has a presentation
\[H^*(P_n;\F_2)\isom \F_2[a_1,...,a_n]/(a_i^3, a_i^2+a_j^2,a_ia_k:i\ne k),\]
where $\deg(a_i)=1$. (For $n>1$, the relation $a_i^3 = 0$ follows from the other relations.)

\begin{lemma}
For every $n\geq 1$, there is an isomorphism of abelian groups \[H^*(P_n;\F_2)^\times\cong\Z/4\times(\Z/2)^{n-1}.\] Moreover, $\{1+a_i: 1\leq i\leq n\}$   generates $H^*(P_n;\F_2)^\times$.
\end{lemma}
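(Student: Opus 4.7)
The plan is to combine a counting argument with an explicit construction of an isomorphism. First, since $H^1(P_n;\F_2) \cong \F_2^n$ with basis $a_1,\ldots,a_n$ and $H^2(P_n;\F_2) \cong \F_2$ is spanned by $y_2 := a_1^2 = \cdots = a_n^2$ (the equalities following from the given relations), every unit has the form $1 + \sum_i \varepsilon_i a_i + \delta y_2$ with $\varepsilon_i, \delta \in \F_2$. Hence $|H^*(P_n;\F_2)^\times| = 2^{n+1}$.

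Set $u_i := 1+a_i$. Using the ring relations together with $2=0$ in $\F_2$, a direct calculation gives $u_i^2 = 1+y_2$ (independent of $i$) and $u_i^4 = (1+y_2)^2 = 1+y_2^2 = 1$, since $y_2^2 \in H^4(P_n;\F_2) = 0$. Because $y_2 \neq 0$, each $u_i$ has order exactly $4$. For $j \geq 2$, set $v_j := u_1 u_j = 1+a_1+a_j$ (using $a_1 a_j = 0$); then $v_j^2 = u_1^2 u_j^2 = (1+y_2)^2 = 1$, so each $v_j$ has order $2$.

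Now define a homomorphism $\Phi\colon \Z/4 \times (\Z/2)^{n-1} \to H^*(P_n;\F_2)^\times$ by sending the generator of $\Z/4$ to $u_1$ and the standard basis vectors of $(\Z/2)^{n-1}$ to $v_2,\ldots,v_n$; this is well-defined by the order calculations above. For surjectivity, observe that $u_j = u_1^{-1} v_j = u_1^3 v_j$ for $j \geq 2$, so every $u_i$ lies in the image. Since $\prod_{i \in S} u_i = 1 + \sum_{i \in S} a_i$ for any nonempty $S \subseteq \{1,\ldots,n\}$ (using $a_i a_j = 0$ for $i \neq j$), and $u_1^2 \cdot \prod_{i \in S} u_i$ additionally contributes a $y_2$ summand (since $y_2 a_i = 0$), every element of $H^*(P_n;\F_2)^\times$ lies in the image. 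Comparing cardinalities shows $\Phi$ is an isomorphism. The final assertion that $\{1+a_i\}$ generates $H^*(P_n;\F_2)^\times$ follows at once since $u_1, v_2,\ldots,v_n$ lie in the subgroup generated by $u_1,\ldots,u_n$.

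The main point is the order calculation for $1+a_i$: the appearance of the $\Z/4$ factor (rather than an additional $\Z/2$) is forced by the non-vanishing of $a_i^2$ in $H^*(P_n;\F_2)$, which distinguishes the non-orientable case from the orientable case. Aside from this, the argument is a direct manipulation of the explicit ring presentation.
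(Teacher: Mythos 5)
Your argument is correct, and it takes a somewhat different route from the paper. You build an explicit homomorphism $\Phi\colon \Z/4\times(\Z/2)^{n-1}\to H^*(P_n;\F_2)^\times$ out of the elements $u_1=1+a_1$ and $v_j=u_1u_j=1+a_1+a_j$, verify the needed order relations ($u_i^2=1+y_2$, $u_i^4=1$, $v_j^2=1$), and conclude by surjectivity plus a cardinality count. The paper instead counts how many units have order $4$ (namely $2^n$: the elements $1+a_{i_1}+\cdots+a_{i_k}$ and $1+a_{i_1}+\cdots+a_{i_k}+y_2$ with $k$ odd) and how many have order $2$ ($2^n-1$), and then invokes the fact that the only abelian group of order $2^{n+1}$ with these order statistics is $\Z/4\times(\Z/2)^{n-1}$. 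Your construction is more hands-on: it avoids the appeal to the classification of finite abelian $2$-groups by order statistics, and it has the added benefit of proving the ``moreover'' generation statement explicitly via the identities $\prod_{i\in S}u_i=1+\sum_{i\in S}a_i$ and $u_1^2\prod_{i\in S}u_i=1+\sum_{i\in S}a_i+y_2$, whereas the paper's counting proof leaves that claim implicit. The underlying computations ($(1+a_i)^2=1+y_2$ since $a_i^2=y_2$ and $y_2^2=0$, $a_ia_j=0$ for $i\neq j$) are the same in both. One small point worth a word in either approach: the count $\lvert H^*(P_n;\F_2)^\times\rvert=2^{n+1}$ uses that \emph{every} element $1+x_1+x_2$ is invertible, which holds because the positive-degree part is nilpotent; the paper takes this for granted as well, so it is not a gap relative to the paper's standard.
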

\begin{proof}
A computation shows that the $2^{n-1}$ elements $1+a_{i_{1}}+\cdots+a_{i_k}$ where $k$ is \emph{odd} (and $i_1,\ldots, i_k$ are distinct) all have (multiplicative) order 4. Furthermore, if $y_2\in H^2 (P_n; \F_2)$ denotes the generator, then when $k$ is odd all elements of the form 
$$1+a_{i_{1}}+\cdots+a_{i_k}+y_2$$
also have order 4, 
 so we have $2(2^{n-1})=2^n$ elements of order 4 in $H^*(P_n;\F_2)^\times$. When $k>0$ is \emph{even}, the elements $1+a_{i_{1}}+\cdots+a_{i_k}$ and $1+a_{i_{1}}+\cdots+a_{i_k}+y_2$ (with $i_1,\ldots, i_k$ distinct) have order 2. This yields   
 $2^{n}-1$ elements of order 2 in $H^*(P_n;\F_2)^\times$.
 The only abelian group of order $2^{n+1}$ with 
 $2^{n}-1$ elements of order 2 and $2^n$ elements of order 4
  is $\Z/4\times(\Z/2)^{n-1}$. 
 \end{proof}

\subsection{The ring structure of $KO(\Sigma)$}

We study the products in $\rKO(\Sigma)$ using the total Stiefel--Whitney class. 
We view $\rKO(\Sigma)$ as the kernel of $KO(\Sigma)\to KO(\textrm{pt})$, so that elements in $\rKO(\Sigma)$ are represented by virtual bundles $E-\varepsilon^n$ with $n=\dim (E)$.

\paragraph{\bf Case $\Sigma=S^2$:} All products are trivial since $S^2$ is a suspension.

\paragraph{\bf Case $\Sigma=\Sigma_g$:}
Let $L_{a_i}-\varepsilon^1$, $L_{b_j}-\varepsilon^1$, and $E_{y_2}-\varepsilon^2$ in $\rKO(\Sigma_g)$ denote the inverse images under $W$ of the units $1+a_i$, $1+b_j$, and $1+y_2$, respectively (where, as above,  $y_2\in H^2 (\Sigma_g; \F_2)$ is the generator).  
By Lemmas~\ref{lem:W} and~\ref{lem:cohg}, 
 these classes additively generate $\widetilde{KO}(\Sigma_g)$, so the method from Section~\ref{sec:pres} yields a presentation of $\widetilde{KO}(\Sigma_g)$ with these elements as generators. After computing the relevant relations, the resulting presentation
 is readily reduced to that given above.
  
\paragraph{\bf Case $\Sigma=P_n$:} For $1\leq i\leq n$ let $L_{a_i}-\varepsilon^1\in\rKO(P_n)$ denote the inverse image under $W$ of the unit $1+a_i$. 
Again, these elements form an additive generating set for $\widetilde{KO}(P_n)$,
and after computing the relations the claimed presentation.

\end{document}